\newtheorem{thm}{Theorem}[section]
\newtheorem{cor}[thm]{Corollary}
\newtheorem{prop}[thm]{Proposition}
\newtheorem{lem}[thm]{Lemma}
\newtheorem{claim}[thm]{Claim}
\theoremstyle{definition}
\newtheorem{defin}[thm]{Definition}
\newtheorem{rem}[thm]{Remark}
\newtheorem{ex}[thm]{Example}
\newtheorem{obs}[thm]{Observation}
\newcommand{\Rgz}{\mathbb{R}_{\geq 0}}
\renewcommand{\d}{d_{\overline{X}}}
\newcommand{\rest}{\!\!\upharpoonright}
\newcommand{\id}{\text{id}}
\newcommand{\diam}{\text{diam}}
\numberwithin{equation}{section} 
\begin{document}

\title{Coarse compactifications and controlled products}

\author[T. Fukaya]{Tomohiro Fukaya}
\address{Department of Mathematics and Information Sciences, 
Tokyo Metropolitan University,
Minami-osawa Hachioji, Tokyo, 192-0397, Japan}
\email{tmhr@tmu.ac.jp} 

\author[S. Oguni]{Shin-ichi Oguni}
\address{Graduate School of Science and Engineering, 
Ehime University, 
2-5 Bunkyo-cho, Matsuyama, Ehime, 790-8577, Japan}
\email{oguni@math.sci.ehime-u.ac.jp}

\author[T. Yamauchi]{Takamitsu Yamauchi}
\address{Graduate School of Science and Engineering, 
Ehime University, 
2-5 Bunkyo-cho, Matsuyama, Ehime, 790-8577, Japan}
\email{yamauchi.takamitsu.ts@ehime-u.ac.jp}

\thanks{T. Fukaya and S. Oguni were supported by Grant-in-Aid for Young Scientists (B) (15K17528) and (16K17595), respectively, from Japan Society of Promotion of Science.
}

\subjclass[2010]{51F99, 54D35}

\keywords{coarse compactification, corona, Gromov product}

\date{\today}

\begin{abstract}
We introduce the notion of controlled products on metric spaces as a generalization of Gromov products, 
and construct boundaries by using controlled products, which we call the Gromov boundaries.
It is shown that the Gromov boundary with respect to a controlled product on a proper metric space complements the space as a coarse compactification.
It is also shown that there is a bijective correspondence between the set of all coarse equivalence classes of controlled products and the set of all equivalence classes of coarse  compactifications.
\end{abstract}

\maketitle

\section{Introduction}
The Higson corona, which is the boundary of the Higson compactification of a proper metric space,  was introduced in a coarse geometric approach to the Novikov conjecture related to signatures of closed oriented manifolds  \cite{roe:1993}. 
In particular, a coarse version of the conjecture, called the coarse Novikov conjecture, was partially solved by the approach. 
Unfortunately, the Higson corona is not metrizable 
whenever the metric space is unbounded and thus it is not easily treated. 
Therefore instead of the Higson compactification and the Higson corona themselves, their metrizable quotients, which are called coarse compactifications and coronae, respectively, were often used. 
Actually, Higson and Roe \cite{higson-roe:1995} used a natural corona, that is, the Gromov boundary, for a geodesic proper hyperbolic space to show the coarse Baum-Connes conjecture for the space, which is stronger than the coarse Novikov conjecture and has applications to the original Novikov conjecture. 
This approach using coronae has been developing (\cite{willett:2009},
\cite{fukaya-oguni:2015}, \cite{fukaya-oguni:pre18}). 

Now we recall precise definitions related to coronae
(see \cite[Section 5.1]{roe:1993} and \cite[Section 2.3]{roe:2003}).

\begin{defin}
\label{def:coarse-cptn}
Let $(X,d)$ be a proper metric space, that is, a metric space in which every closed bounded subset is compact. 
Let $C_h(X)$ denote the $C^*$-algebra 
consisting of all Higson functions on $X$, where a bounded continuous function 
$f: X \to \mathbb{C}$ is a \emph{Higson function}
if for every $\varepsilon>0$ and $R>0$, there exists a bounded subset $B$ such that 
for every $x,y \in X \setminus B$, if 
 $d(x,y)< R$, then $|f(x)-f(y) | <\varepsilon$.
The compactification $hX$ of $X$ such that $C(hX)$ is naturally isomorphic to $C_h(X)$ is called the \emph{Higson compactification} of $X$, and its boundary $hX \setminus X$, denoted by $\nu X$, is called the \emph{Higson corona} of $X$.

A compactification $cX$ of $X$ is called a \emph{coarse compactification} if it is metrizable and there exists a continuous map $f: hX \to cX$ from the Higson compactification $hX$ to $cX$ such that $f\rest_X =\id_X$. 
The boundary $cX\setminus X$ is called a \emph{corona} of $X$. 
\end{defin}

When we want a corona for a proper metric space,
we need to seek (or construct) a compactification of the space
and confirm whether it is a coarse compactification or not.
For example, consider the Euclidean plane $\mathbb{R}^2$
and two compactifications as follows:
One is the compactification induced by the inclusion
\begin{align*}
\mathbb{R}^2\ni \mathbf{x}\mapsto \frac{\mathbf{x}}{1+\|\mathbf{x}\|}\in \overline{B}(\mathbf{0},1),
\end{align*}
into the closed ball $\overline{B}(\mathbf{0},1)$ with center $\mathbf{0}$ and radius $1$.
The other is the compactification induced by the inclusion
\begin{align*}
\mathbb{R}^2\ni (x,y)\mapsto \left(\frac{x}{1+|x|}, \frac{y}{1+|y|}\right)\in [-1,1]^2, 
\end{align*}
into the square $[-1,1]^2$.
Then we can see that the first one is coarse, but the second one is not.

In this paper we consider how to construct a compactification that is automatically coarse.
Our idea comes from the construction of the Gromov boundary 
by using the Gromov product on a hyperbolic space.
Actually we achieve it by introducing a non-linear version of the Gromov product.
First we observe properties of the Gromov product.

\begin{obs}
	\label{ex:gromov-prod}
Let $(X,d)$ be a metric space.
Then the \emph{Gromov product} at $x_0\in X$
is defined as
\begin{align*}
(x\mid y)_{x_0}=\frac{1}{2}(d(x_0,x)+d(x_0,y)-d(x,y))
\end{align*}
for any $x,y\in X$.
It is symmetric, that is, 
$(x\mid y)_{x_0}=(y\mid x)_{x_0}$ for any $x,y\in X$.
The space $X$ is said to be \emph{hyperbolic} if
there exists $\delta\ge 0$ satisfying the following linear inequality for any $x,y,z \in X$:
\begin{align}
\label{eq:gromov-hyp}
\min\{(x\mid y)_{x_0},(y\mid z)_{x_0}\}\le (x\mid z)_{x_0}+\delta.
\end{align}
Note that the Gromov product also satisfies the following linear inequalities for any $x,y \in X$:
\begin{align}
\label{eq:gramov-prod-proper}
&(x\mid y)_{x_0}\le d(x_0,x) \text{ and }\\
\label{eq:gramov-prod-higson}
&d(x_0,x)\le 2(x\mid y)_{x_0}+d(x,y).
\end{align} 
\end{obs}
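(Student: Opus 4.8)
The plan is to verify each asserted property directly from the metric axioms, since the Gromov product is defined as an explicit linear combination of the distances $d(x_0,x)$, $d(x_0,y)$, and $d(x,y)$. There is no deep content to extract here; the entire task is to rearrange each claim algebraically and then match it against the appropriate axiom of the metric $d$.

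First I would dispatch symmetry. Starting from the defining formula
\begin{align*}
(x\mid y)_{x_0}=\frac{1}{2}\bigl(d(x_0,x)+d(x_0,y)-d(x,y)\bigr),
\end{align*}
I would observe that the sum $d(x_0,x)+d(x_0,y)$ is manifestly symmetric in $x$ and $y$, while $d(x,y)=d(y,x)$ by symmetry of the metric; hence interchanging $x$ and $y$ leaves the right-hand side unchanged, giving $(x\mid y)_{x_0}=(y\mid x)_{x_0}$.

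Next I would treat the two inequalities. For \eqref{eq:gramov-prod-proper}, I would rearrange the desired bound $(x\mid y)_{x_0}\le d(x_0,x)$ into the equivalent form $d(x_0,y)\le d(x_0,x)+d(x,y)$, which is exactly the triangle inequality for the triple $x_0,x,y$. For \eqref{eq:gramov-prod-higson}, I would first compute, straight from the definition, that $2(x\mid y)_{x_0}+d(x,y)=d(x_0,x)+d(x_0,y)$; the claimed inequality $d(x_0,x)\le 2(x\mid y)_{x_0}+d(x,y)$ then collapses to $0\le d(x_0,y)$, which is the nonnegativity of the metric.

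Since every step is an immediate algebraic manipulation followed by invoking a single axiom, I do not anticipate any genuine obstacle. The only point demanding attention is bookkeeping: correctly pairing each inequality with the axiom that yields it (the triangle inequality for \eqref{eq:gramov-prod-proper}, and nonnegativity for \eqref{eq:gramov-prod-higson}). The hyperbolicity condition \eqref{eq:gromov-hyp} is a definition rather than an assertion, so it requires no proof.
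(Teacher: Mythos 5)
Your verification is correct and matches the (implicit) argument the paper has in mind — the paper states these facts without proof precisely because they reduce, as you show, to the symmetry of $d$, the triangle inequality for $x_0,x,y$, and the nonnegativity of $d(x_0,y)$. Nothing is missing.
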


We consider a symmetric product with non-linear versions of inequalities
\eqref{eq:gromov-hyp}, \eqref{eq:gramov-prod-proper} and \eqref{eq:gramov-prod-higson}. 
By $\Rgz$ we denote the set of all non-negative real numbers.

\begin{defin}
\label{def:gromov-prod}
For a metric space $(X,d)$ and $x_0 \in X$, 
we say that a symmetric function $(\cdot \mid \cdot): X \times X \to \mathbb{R}_{\geq 0}; (x,y)\mapsto (x \mid y)$ is a 
\emph{pre-controlled product} on $X$ if it satisfies the following conditions:
\begin{enumerate}[(CP1)]
\item
 \label{gp:npc}
There exists a non-decreasing function $\rho_1: \Rgz \to \Rgz$ such that
\begin{align*}
\min \{ (x\mid y), (y \mid z)\}\leq \rho_1((x\mid z))
\end{align*} for every $x,y,z \in X$.
\item 
\label{gp:proper}
There exists a non-decreasing function $\rho_2 : \Rgz \to \Rgz$ such that 
\begin{align*}
(x\mid y) \leq \rho_2 (d(x_0,x))
\end{align*}
 for every $x,y \in X$.
\item
\label{gp:higson}
There exists a non-decreasing function $\rho_3 : \Rgz \times \Rgz \to \Rgz$ such that 
\begin{align*}
d(x_0,x) \leq \rho_3 ((x\mid y),d(x,y) )
\end{align*} for every $x,y \in X$, 
where we say that a function $\rho_3 : \Rgz \times \Rgz \to \Rgz$ in two variables is \emph{non-decreasing}  if $\rho_3(s_1,t_1) \leq \rho_3(s_2,t_2)$ whenever $s_1 \leq s_2$ and $t_1 \leq t_2$.
\end{enumerate}
\end{defin}

\begin{rem}
\label{rem:base-indep}
A pre-controlled product does not depend on the choice of a base point $x_0$ in the following sense: Suppose that  $(\cdot \mid \cdot)$ is a pre-controlled product with respect to $x_0$ and let $\rho_i$, $i\in \{1,2,3\}$, be functions in (CP$i$), respectively.
Then, for any $y_0 \in X$, the functions $\rho_i'$ defined by 
\begin{align*}
&\rho_1'(t) =\rho_1(t),\ 
\rho_2'(t)=\rho_2(d(x_0,y_0)+t), \text{ and }
\rho_3'(s,t)=\rho_3(s,t) +d(x_0,y_0) 
\end{align*}
for $s,t \in \Rgz$
show that  $(\cdot \mid \cdot)$ is a pre-controlled product with respect to $y_0$.
\end{rem}

\begin{ex}
For a hyperbolic space, the Gromov product in Observation \ref{ex:gromov-prod}  is a pre-controlled product.
\end{ex}

\begin{ex}
\label{ex:trivial-CP}
For a metric space $(X,d)$ and $x_0 \in X$, define $(\cdot\mid \cdot) : X \times X \to \Rgz$ by 
\begin{align*}
(x\mid y)= \min \{ d(x_0,x), d(x_0,y)\}
\end{align*}
for $x,y \in X$.
Then $(\cdot\mid \cdot)$ is a pre-controlled product
since 
\begin{align*}
\min \{ (x\mid y), (y\mid z) \} \leq (x \mid z),\ 
(x \mid y ) \leq d(x_0, x) \text{ and }
d(x_0,x) \leq (x\mid y) + d(x,y)
\end{align*}
for any $x,y, z\in X$.
\end{ex}

Recall that a metric space $(X,d)$ is called a \emph{geodesic space} if for every $x, x' \in X$ there exists an isometric embedding  $\gamma : [0, t_\gamma] \to X$ such that $\gamma(0)=x$ and  $\gamma(t_\gamma) = x'$.
The map $\gamma$ is called a \emph{geodesic segment} between $x$ and $x'$.

\begin{ex}
\label{ex:Busemann-CP}
Let $(X, d)$ be a Busemann  space, that is, a geodesic space satisfying for any geodesic segments $\gamma, \eta$ and for any $t\in [0,t_\gamma]$, $s\in [0,t_\eta]$ and $c \in [0,1]$,  
\begin{align}
\label{eq:ex:busemann:def}
d(\gamma(ct),\eta(cs))
\le cd(\gamma(t),\eta(s)) 
+ (1-c)d(\gamma(0), \eta(0))
\end{align}
(see \cite{bowditch:1995}, \cite[Definition 8.1.1]{papadopoulos:2014}).
Take a base point $x_0\in X$ and a constant $D>0$.
For any $x\in X$, we have a unique geodesic segment $\gamma_x$ between $x_0$ and $x$ (see \cite[Proposition 8.1.4]{papadopoulos:2014}).
Define a symmetric function $(\cdot\mid \cdot)^D : X \times X \to \Rgz$
by letting 
\begin{align*}
(x\mid y)^D=\max\{t\in [0, \min\{t_{\gamma_x},t_{\gamma_y}\}]: d(\gamma_x(t),\gamma_y(t))\le D\}
\end{align*}
for $x,y\in X$.
Then it is a pre-controlled product since  it satisfies
\begin{align*}
&\min\{(x\mid y)^D,(y\mid z)^D\}\le 2(x\mid z)^D,
\
(x\mid y)^D\le d(x_0,x) \text{ and }
\\
&
d(x_0,x)\le (x\mid y)^D+ \frac{2}{D}(x\mid y)^Dd(x,y) +d(x,y)  
\end{align*}
for any $x,y,z\in X$.  
It is easy to see the first two inequalities.
To show the last one,
let $x ,y \in X$ and put $t_0 =\min\{t_{\gamma_x}, t_{\gamma_y} \}$.
Then, since $t_{\gamma_x} =d(x_0,x)$ and $t_{\gamma_y} =d(x_0,y)$, we have 
\begin{align}
\label{eq:ex:busemann:1}
d(x_0,x) =t_0  -t_0+ t_{\gamma_x} \leq 
t_0 +|d(x_0,x) - d(x_0,y)| \leq t_0 +d(x,y).
\end{align}
Thus, if $(x\mid y)^D=t_0$, then the conclusion holds.
Assume that $(x\mid y)^D< t_0$ and let $t= (x\mid y)^D$.
Then $D=d(\gamma_x(t),\gamma_y(t))$ and, by \eqref{eq:ex:busemann:def}, 
\begin{align*}
d(\gamma_x(t),\gamma_y(t)) 
\leq \frac{t}{t_0} d(\gamma_x(t_0),\gamma_y(t_0)).
\end{align*}
Hence $t_0 \leq \frac{t}{D} d(\gamma_x(t_0),\gamma_y(t_0))$. 
This, \eqref{eq:ex:busemann:1} and the facts that 
 $x=\gamma_x(t_{\gamma_x})$, 
$y=\gamma_y(t_{\gamma_y})$ and $t_0 =\min\{t_{\gamma_x}, t_{\gamma_y} \}$ imply
\begin{align*}
d(x_0,x) 
&
\leq \frac{t}{D} d(\gamma_x(t_0),\gamma_y(t_0)) + d(x,y)
\\
&\leq  \frac{t}{D}( d(\gamma_x(t_0),x) +d(x,y) +d(y,\gamma_y(t_0)) )+ d(x,y)
\\
&
=  \frac{t}{D}( t_{\gamma_x}-t_0 +d(x,y) +t_{\gamma_y}-t_0 )+ d(x,y)
\\
&= \frac{t}{D}(|d(x_0,x)-d(x_0,y)| +d(x,y))+ d(x,y)
\\
&\leq \frac{2}{D}(x\mid y)^D d(x,y) + d(x,y).
\end{align*}

A similar but a complicated argument can be applied to coarsely convex spaces  defined in \cite{fukaya-oguni:pre18} (see Section \ref{section:coarsely-convex}).
\end{ex}

By the same argument as in the case of the Gromov product for a hyperbolic space \cite[1.8]{gromov:1987},
we can define the {Gromov boundary} $\partial X$ with respect to a pre-controlled product of a metric space $X$ and a topology on  $\overline{X} := X \cup \partial X$ so that it is metrizable and $X$ is dense in $\overline{X}$ (see Definitions \ref{def:gromov-boundary} and \ref{def:gromov-bdry} and Lemma \ref{lem:Gromov-cptn}).
For the Gromov product of a hyperbolic space $X$,
it is known that $\overline{X}$ is compact provided $X$ is proper and geodesic. 
However, the space $\overline{X}$  is not necessarily compact even if we consider the Gromov product of a hyperbolic space.
Indeed, there are a non-proper unbounded geodesic hyperbolic space $X$ such that $\partial X = \varnothing$ (\cite[p.100, Counterexample]{gromov:1987}) 
and a non-geodesic unbounded proper hyperbolic space $Y$ such that $\partial Y = \varnothing$ (Example \ref{ex:non-cpt}).
The first main theorem completely answers when 
the space $\overline{X}$ with respect to a pre-controlled product is compact or not.

\begin{thm}[Theorem \ref{fact:item:proper->cpt}]
\label{thm:item:proper->cpt}
Let $X$ be a metric space and $x_0 \in X$.  
Let $\partial X$ be the Gromov boundary  with respect to a pre-controlled product $(\cdot \mid \cdot)$ on $X$ and 
$\overline{X} =X \cup \partial X$ the topological space as above. 
Then  $\overline{X}$ is compact 
if and only if $(X,d)$ is proper and $(\cdot \mid \cdot)$ satisfies the 
following condition:
\begin{enumerate}[{\rm (CP1)}]
\setcounter{enumi}{3}
\item \label{gp:tot-bdd}
There exists a non-decreasing function 
$\rho_4 : \Rgz \to \Rgz$ such that for every $R\geq  0$ and $x \in X \setminus \overline{B}_d(x_0, \rho_4(R))$ there exists $y \in \overline{B}_d(x_0,\rho_4(R))$ satisfying $(x\mid y) \geq R$,
\end{enumerate}
where $\overline{B}_d (x_0,\rho_4(R)) =\{y \in X : d(x_0,y)\leq \rho_4(R)\}$.
\end{thm}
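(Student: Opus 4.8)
The plan is to prove both implications through sequential compactness, using that $\overline{X}$ is metrizable and $X$ is dense, so it suffices to decide when every sequence in $X$ subconverges in $\overline{X}$. A sequence $(x_n)$ in $X$ splits into two regimes: the bounded one, where $(x_n)$ lies in some $\overline{B}_d(x_0,R)$, and the unbounded one, where after passing to a subsequence $d(x_0,x_n)\to\infty$. Convergence to a point of $X$ is $d$-convergence, while by the construction of $\partial X$ a sequence converges to a boundary point exactly when $\liminf_{n,m}(x_n\mid x_m)=\infty$. The two hypotheses are tailored to these regimes: properness governs the bounded sequences, and (CP4) governs the escaping ones.

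For the direction ``compact $\Rightarrow$ proper and (CP4)'' I would argue as follows. For properness, a sequence in a ball $\overline{B}_d(x_0,R)$ has, by (CP2), all mutual products bounded by $\rho_2(R)$, so no subsequence can converge to a boundary point; compactness then forces a $d$-convergent subsequence, giving compactness of closed balls. For (CP4) I pass to the contrapositive and to an equivalent decoupled form: there is some $R_0$ such that for every $S$ there is a point at distance $>S$ from $x_0$ whose product with every point of $\overline{B}_d(x_0,S)$ is $<R_0$. Choosing such points along radii $S_1<S_2<\cdots$ with $S_{i+1}\ge d(x_0,x_{S_i})$ produces a sequence $(w_i)$ with $d(x_0,w_i)\to\infty$ and $(w_i\mid w_{i'})<R_0$ for all $i\ne i'$, since each earlier point lies in the ball controlled by the later one; this sequence has no subsequence converging to a boundary point and none converging in $X$, contradicting compactness.

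The substantial direction is ``proper and (CP4) $\Rightarrow$ compact'', where the work is to show that an escaping sequence $d(x_0,x_n)\to\infty$ has a subsequence with $\liminf_{n,m}(x_n\mid x_m)=\infty$. For each integer $j$, the decoupled form of (CP4) furnishes a radius $S_j$ and, for $n$ large, a partner $y_n^{(j)}\in\overline{B}_d(x_0,S_j)$ with $(x_n\mid y_n^{(j)})\ge j$. By properness each ball $\overline{B}_d(x_0,S_j)$ is compact, so a diagonal argument extracts one subsequence of $(x_n)$ along which $y_n^{(j)}\to z_j$ for every $j$. One then transfers the large products $(x_n\mid y_n^{(j)})\ge j$ into large products $(x_n\mid x_m)$ by repeated use of (CP1): first $(y_n^{(j)}\mid y_m^{(j)})$ is shown to be large for $n,m$ large, where (CP2) forces $d(x_0,y_n^{(j)})\ge\rho_2^{-1}(j)\to\infty$, and then (CP3), together with $d(y_n^{(j)},y_m^{(j)})\to 0$, bounds this distance by $\rho_3\big((y_n^{(j)}\mid y_m^{(j)}),\,\text{small}\big)$ and hence forces the product up; two further applications of (CP1), to the triples $(x_m,y_m^{(j)},y_n^{(j)})$ and $(x_n,y_n^{(j)},x_m)$, propagate largeness to $(x_m\mid y_n^{(j)})$ and finally to $(x_n\mid x_m)$.

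The hard part will be the bookkeeping hidden in the phrase ``forces the product up'': each use of (CP1)--(CP3) controls only $\rho_i$ of a product, so one must invert the non-decreasing $\rho_i$, and the argument closes precisely because each $\rho_i^{-1}$ tends to $\infty$ along the estimates. I would isolate once the elementary fact that if $\rho$ is non-decreasing and $\rho(t)\ge s$ then $t\ge\rho^{-1}(s):=\inf\{u:\rho(u)\ge s\}$, with $\rho^{-1}(s)\to\infty$ as $s\to\infty$ whenever $\rho$ is unbounded, and apply it uniformly. The degenerate cases where some $\rho_i$ is bounded deserve a separate remark: then all products are globally bounded, $\varnothing=\partial X$, and both (CP4) and compactness reduce to $X$ being bounded (hence, with properness, compact), so the equivalence still holds.
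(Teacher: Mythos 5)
Your proposal is correct, but it follows a genuinely different route from the paper's proof. The paper establishes the ``if'' direction by combining completeness of $(\overline{X},\d)$ (Lemma \ref{lem:Gromov-cptn}(iii)) with total boundedness: its key step, Claim \ref{claim:fact:item:proper->cpt}, upgrades (CP\ref{gp:tot-bdd}) to the extended product on all of $\overline{X}$, after which a finite $1/n$-net of a single large ball $\overline{B}_d(x_0,S)$ becomes an $\varepsilon$-net of $\overline{X}$ via the entourages $V_n$. You instead argue by sequential compactness of sequences drawn from the dense subset $X$, splitting into a bounded regime (settled by properness) and an escaping regime, where you extract $d$-convergent partner sequences $y_n^{(j)}$ by a diagonal argument over the compact balls $\overline{B}_d(x_0,S_j)$ and then propagate largeness of products through (CP2), (CP3) and two applications of (CP1); I checked this chain and the inversion bookkeeping closes as you predict, since each $\rho_i$ is finite-valued and non-decreasing, so $\rho_i^{-1}(s)=\inf\{u:\rho_i(u)\ge s\}\to\infty$. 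In effect you re-prove the relevant case of the completeness lemma inside your escaping-sequence analysis, while avoiding both the extension of $(\cdot\mid\cdot)$ to $\overline{X}$ and the uniformity formalism. For the ``only if'' direction the paper constructs $\rho_4$ explicitly from a finite cover of $\overline{X}$ by sets $V_{m_R}[z_i^R]$ with representatives chosen in $X$, whereas you argue by contradiction, turning the failure of the decoupled form of (CP\ref{gp:tot-bdd}) into an escaping sequence $(w_i)$ with all mutual products below a fixed $R_0$ and hence with no convergent subsequence; this is more transparent about what obstructs compactness but is non-constructive. The price of your route is the separate treatment of the degenerate case where some $\rho_i$ is bounded --- correctly dispatched, since $(x\mid x)\le\rho_1((x\mid x))$ together with (CP\ref{gp:higson}) forces $X$ to be bounded there --- a case the paper's argument absorbs without comment.
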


\begin{defin}
\label{def:tot-bdd-gromov-prod}
For a metric space $(X,d)$ and $x_0 \in X$, 
we say that a symmetric function $(\cdot \mid \cdot)  : X \times X \to \mathbb{R}_{\geq 0}; (x,y)\mapsto (x \mid y)$ is a \emph{controlled product} if it is a pre-controlled product  satisfying (CP\ref{gp:tot-bdd}).

If $(X,d)$ is a proper metric space and $(\cdot \mid \cdot)$ is a controlled product, then we call the space $\overline{X}$ the \emph{Gromov compactification} of $X$ with respect to $(\cdot \mid \cdot)$.
\end{defin}

\begin{ex}
The pre-controlled product in Example \ref{ex:Busemann-CP} is a controlled product since the identity function $\id_{\Rgz} : \Rgz \to \Rgz$ satisfies (CP\ref{gp:tot-bdd}).
The Gromov product on a hyperbolic geodesic space and the pre-controlled product in Example
 \ref{ex:trivial-CP}
are also controlled products, see Examples \ref{ex:gromov-prod-controlled-prod} and \ref{ex:trivial-controlled-prod}.
\end{ex}

\begin{rem}
By the same reason as in Remark \ref{rem:base-indep},
the condition (CP\ref{gp:tot-bdd}) does not depend on the choice of the base point $x_0$. 
Indeed, if $(\cdot \mid \cdot)$ is a controlled product at $x_0$ and  $\rho_4 : \Rgz \to \Rgz$ is a function in (CP\ref{gp:tot-bdd}) with respect to $(\cdot \mid \cdot)$, then for any $y_0 \in X$,
the functions $\rho_4': \Rgz \to \Rgz$ defined by 
$\rho_4'(t) =\rho_4(t) +d(x_0,y_0)$, $t\in \Rgz$,
satisfies (CP\ref{gp:tot-bdd}) with respect to $y_0$.
\end{rem}

Two compactifications $c_1 X$ and $c_2X$ of $X$ are said to be \emph{equivalent} if there exists a homeomorphism
$f :c_1 X \to c_2 X$ such that $f\rest_X = \id_X$.
We say that two controlled products $(\cdot \mid \cdot)$ and $(\cdot \mid \cdot)'$ on $X$ are  \emph{coarsely equivalent} if 
there exist non-decreasing functions $\rho_-, \rho_+ : \Rgz \to \Rgz$ such that $\lim_{t \to \infty} \rho_-(t) = \infty$ and $\rho_-((x\mid y)) \leq (x\mid y)' \leq \rho_+ ((x\mid  y))$ for each $x, y \in X$.
For example, in Example \ref{ex:Busemann-CP}, controlled products $(x\mid y)^{D_1}$
and $(x\mid y)^{D_2}$ for different constants $D_1, D_2 >0$ are coarsely equivalent.
Our second main theorem clarifies a relation between 
coarse compactifications and controlled products.

\begin{thm}[Corollary \ref{cor:main-well-def} and Theorem \ref{prop:coarse-cptn-Gromov-cptn}]
\label{thm:coarse-cptn-Gromov-cptn}
Let $X$ be a proper metric space. Then the correspondence for a controlled product to the Gromov compactification 
gives a bijection from 
the set of all coarse equivalence classes of controlled products on $X$ 
to the set of all equivalence classes of coarse compactifications of $X$. 
\end{thm}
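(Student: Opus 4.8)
The plan is to establish the bijection in two halves: well-definedness and surjectivity together, then injectivity. The main conceptual tool is to interpret a controlled product as data determining which Higson functions extend continuously to the boundary, thereby identifying the Gromov compactification with a quotient of the Higson compactification.

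\medskip

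\noindent\textbf{Well-definedness and surjectivity.} First I would verify that the correspondence is well-defined on coarse equivalence classes: if $(\cdot\mid\cdot)$ and $(\cdot\mid\cdot)'$ are coarsely equivalent controlled products, with $\rho_-((x\mid y)) \le (x\mid y)' \le \rho_+((x\mid y))$ and $\rho_-(t)\to\infty$, then a sequence is Cauchy with respect to one Gromov product precisely when it is with respect to the other (since $(x\mid y)\to\infty$ iff $(x\mid y)'\to\infty$), so the two Gromov boundaries agree pointwise. One then checks the identity on $X$ extends to a homeomorphism $\overline{X}\to\overline{X}'$ fixing $X$, using Lemma~\ref{lem:Gromov-cptn} to compare the metrizable topologies; the sandwiching by $\rho_\pm$ guarantees the extension and its inverse are continuous. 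This shows equivalent controlled products yield equivalent compactifications. For the image landing in coarse compactifications, I would invoke the first main theorem (Theorem~\ref{thm:item:proper->cpt}), which guarantees $\overline{X}$ is compact and metrizable, together with the (earlier, assumed) result that the Gromov compactification is dominated by the Higson compactification, i.e.\ the existence of a continuous $f\colon hX\to\overline{X}$ restricting to the identity. Here conditions (CP\ref{gp:proper}) and (CP\ref{gp:higson}) are exactly what make boundary points detected by Higson functions, so the comparison map exists.

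\medskip

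\noindent\textbf{Surjectivity.} Given an arbitrary coarse compactification $cX$ with corona $\partial_c X = cX\setminus X$, I would construct a controlled product whose Gromov compactification recovers $cX$. The natural candidate mimics Example~\ref{ex:trivial-CP}: fix a metric $\bar d$ on $cX$ inducing its topology, and define $(x\mid y)$ to be a suitable function — for instance $\min\{d(x_0,x), d(x_0,y)\}$ modulated by $\bar d(x,y)$, or more precisely something like a quantity that tends to infinity exactly when $x,y$ approach the same boundary point and stays bounded otherwise. The design requirement is that $(x\mid y)\to\infty$ iff $x,y$ converge to a common point of $\partial_c X$; then the Gromov boundary is in canonical bijection with $\partial_c X$. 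Verifying (CP\ref{gp:npc})--(CP\ref{gp:tot-bdd}) for this construction will require care — particularly (CP\ref{gp:npc}), the non-linear hyperbolicity inequality — and I expect this to be where Higsonness of the defining function (inherited from $cX$ being coarse) is used to produce the control functions $\rho_i$.

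\medskip

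\noindent\textbf{Injectivity.} Finally, suppose two controlled products $(\cdot\mid\cdot)$ and $(\cdot\mid\cdot)'$ give equivalent Gromov compactifications via a homeomorphism fixing $X$. I must show they are coarsely equivalent. From the homeomorphism, a sequence converges to a boundary point in one compactification iff it does in the other, with matching limits; translated into the products, this says $(x_n\mid y_n)\to\infty \Leftrightarrow (x_n\mid y_n)'\to\infty$ along sequences leaving every bounded set. The hard part will be upgrading this qualitative equivalence of ``divergence to infinity'' into the quantitative sandwiching $\rho_-((x\mid y))\le(x\mid y)'\le\rho_+((x\mid y))$ by explicit non-decreasing functions with $\rho_-\to\infty$. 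I anticipate this is the main obstacle: one must extract uniform control functions from the compactness of $\overline{X}$ and the uniform continuity of the homeomorphism, likely by a contradiction argument producing sequences that violate the desired monotone bound yet converge to conflicting boundary behavior, contradicting the homeomorphism. The properness of $X$ and condition (CP\ref{gp:tot-bdd}) should supply the compactness leverage needed to make the control functions uniform rather than merely pointwise.
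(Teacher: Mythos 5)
Your overall architecture matches the paper's: well-definedness and injectivity are handled by showing that coarse equivalence of controlled products corresponds exactly to equivalence of the Gromov compactifications (the paper's Proposition~\ref{prop:order-cp-cptn} and Corollary~\ref{cor:main-well-def}, proved via the order relation $\preceq$ and uniform continuity of the comparison map, essentially as you sketch), the image lands in coarse compactifications because Gromov functions are Higson functions, and surjectivity is obtained by manufacturing a controlled product from a given coarse compactification $cX$.

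The genuine gap is in the surjectivity step. You leave the construction of $(\cdot\mid\cdot)^c$ as a ``design requirement'' without a formula, and, more seriously, you locate the difficulty in the wrong condition. The paper takes $n(x,y)=\max\{n: d_c(x,y)\le 2^{-n}\diam(cX)\}$ and $(x\mid y)^c=\min\{d(x_0,x),d(x_0,y),n(x,y)\}$; for this candidate (CP\ref{gp:npc}) is immediate from the triangle inequality in $(cX,d_c)$ (one gets $\min\{n(x,y),n(y,z)\}\le n(x,z)+1$), and (CP\ref{gp:proper}) and (CP\ref{gp:tot-bdd}) are routine. The condition that actually requires the hypothesis that $cX$ is a \emph{coarse} compactification is (CP\ref{gp:higson}): one must show that if $(x\mid y)^c$ is bounded and $d(x,y)$ is bounded then $x$ stays in a bounded set, and this is exactly the statement that points of $X$ near two $d_c$-separated closed pieces of $cX$ within metric distance $R$ of each other form a bounded set. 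That is Lemma~\ref{lem:DKU-disjoint-diverge}, which rests on the Dranishnikov--Keesling--Uspenskij divergence property of the Higson compactification; without it your construction would apply verbatim to the non-coarse square compactification of $\mathbb{R}^2$ from the introduction and produce a ``controlled product,'' contradicting the first half of the theorem. Your proposal never identifies this ingredient, and instead predicts that (CP\ref{gp:npc}) is where the work lies, so as written the surjectivity argument cannot be completed. The injectivity half, by contrast, is only a plan but points at the right mechanism (uniform continuity of the homeomorphism between compact metric spaces, converted into the quantitative relation $\preceq$ via Remark~\ref{rem:f-preceq-g}), and would go through along the lines of the paper's proof of Proposition~\ref{prop:order-cp-cptn}.
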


In Section \ref{section:gromov-cptn,bdry}, we define Gromov boundaries and Gromov compactifications for controlled products and prove Theorem \ref{thm:item:proper->cpt}.
In Section \ref{section:map-cprod},
it is shown that coarsely equivalent controlled products induce equivalent Gromov compactifications, and vice versa.
In Section \ref{section:gromov-cptn-coarse-cptn},
we show that every Gromov compactification is a coarse compactification and prove Theorem \ref{thm:coarse-cptn-Gromov-cptn}.
In Section \ref{section:coarsely-convex}, we consider the Gromov product for a coarsely convex space.

The following notation will be used throughout this paper:
For a metric space $(X,d)$,  $x \in X$, $A\subset X$ and $S>0$, 
let 
\begin{align*}
&B_d (x,S) =\{y \in X : d(x,y)<S\}, \quad
\overline{B}_d (x,S) =\{y \in X : d(x,y)\leq S\} \text{ and } 
\\
&N_d(A,S) = \{y \in X : d(y,a) \leq S \text{ for some } a \in A\}.
\end{align*}
Let $\mathbb{R}$ and $\mathbb{N}$ be the set of real numbers and the set of positive integers, respectively.

\section{Gromov boundaries and Gromov compactifications}
\label{section:gromov-cptn,bdry}
Throughout this section, let $(X,d)$ be a metric space with a pre-controlled product
$(\cdot \mid \cdot)$ at $x_0 \in X$.

\begin{defin}
\label{def:Sinfty}
Let $$S_\infty (X) =\{ (x_i) : (x_i\mid x_j) \to \infty \text{ as } i,j \to \infty\}$$
and define a relation $\sim $ on $S_\infty (X) $ by letting for every $(x_i), (y_i) \in S_{\infty} (X)$,
\begin{center}
	$(x_i) \sim  (y_i) $ $\Longleftrightarrow$ $(x_i\mid y_i) \to \infty$ as $i \to \infty$.
\end{center}
\end{defin}

\begin{lem}
\label{fact:S-infty}
\begin{enumerate}[{\rm (i)}]
\item The relation $\sim $ is an equivalence relation on $S_\infty (X)$.
\item 
\label{fact:item:infty}
For every $(x_i) \in S_\infty(X)$, $d(x_0,x_i) \to \infty$. 
In particular,
if $ S_\infty(X) \ne \varnothing$, then $X$ is unbounded.
\end{enumerate}
\end{lem}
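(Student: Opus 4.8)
The plan is to isolate one elementary monotonicity fact and then apply it, together with the defining inequalities (CP\ref{gp:npc}) and (CP\ref{gp:proper}), to each clause. The fact is: \emph{if $\rho : \Rgz \to \Rgz$ is non-decreasing and $(a_i)$ is a sequence in $\Rgz$ with $\rho(a_i) \to \infty$, then $a_i \to \infty$.} Indeed, if $a_i$ failed to diverge, some subsequence would satisfy $a_{i_k} \leq M$ for a fixed $M$, whence $\rho(a_{i_k}) \leq \rho(M) < \infty$ by monotonicity, contradicting $\rho(a_i) \to \infty$. This is the only real ingredient beyond the definitions.

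I would first prove (\ref{fact:item:infty}), since its argument reappears in (i). Fix $(x_i) \in S_\infty(X)$. By the definition of $S_\infty(X)$, specializing to the diagonal $i=j$ gives $(x_i \mid x_i) \to \infty$. Applying (CP\ref{gp:proper}) with $x=y=x_i$ yields $(x_i \mid x_i) \leq \rho_2(d(x_0,x_i))$, so $\rho_2(d(x_0,x_i)) \to \infty$. The monotonicity fact applied to $\rho_2$ then gives $d(x_0,x_i) \to \infty$. In particular, if $S_\infty(X) \neq \varnothing$, any member witnesses that $X$ is unbounded.

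For (i), reflexivity and symmetry are immediate: the diagonal $i=j$ in the definition of $S_\infty(X)$ gives $(x_i \mid x_i) \to \infty$, so $(x_i) \sim (x_i)$; and symmetry of $\sim$ follows directly from the symmetry of the product, $(x_i \mid y_i) = (y_i \mid x_i)$. The substantive clause is transitivity. Suppose $(x_i) \sim (y_i)$ and $(y_i) \sim (z_i)$, that is $(x_i \mid y_i) \to \infty$ and $(y_i \mid z_i) \to \infty$, so that $\min\{(x_i \mid y_i), (y_i \mid z_i)\} \to \infty$. By (CP\ref{gp:npc}) applied to the triple $x_i, y_i, z_i$,
\[
\min\{(x_i \mid y_i), (y_i \mid z_i)\} \leq \rho_1((x_i \mid z_i)),
\]
hence $\rho_1((x_i \mid z_i)) \to \infty$, and the monotonicity fact applied to $\rho_1$ gives $(x_i \mid z_i) \to \infty$, i.e. $(x_i) \sim (z_i)$.

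The only step requiring care is transitivity, and even there the content is just (CP\ref{gp:npc}) combined with the monotonicity fact; I would expect no genuine obstacle, only the bookkeeping of confirming that the hypothesis $(x_i) \sim (y_i)$ and $(y_i) \sim (z_i)$ feeds correctly into the left-hand side of (CP\ref{gp:npc}). Note that (CP\ref{gp:higson}) plays no role in this lemma.
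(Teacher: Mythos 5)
Your proof is correct and follows the same route as the paper, whose proof is just the one-line observation that transitivity follows from (CP1) and item (ii) from (CP2); your monotonicity fact and the diagonal specialization $i=j$ are exactly the details being elided there. No issues.
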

\begin{proof}
For item (i), 
reflexivity and symmetry are obvious. 
Transitivity follows from (CP\ref{gp:npc}).
Item (ii)  follows from (CP\ref{gp:proper}).
\end{proof} 

\begin{defin}
\label{def:gromov-boundary}
Let $$\partial X = S_\infty (X) / \sim \quad \text{ and } \quad \overline{X} = X \cup \partial X.$$
For $x \in X$ and a sequence $(x_i)$ in $X$,
we write $(x_i) \in x$ if $x_i=x$ for every $i \in \mathbb{N}$.
We extend the pre-controlled product $(\cdot \mid \cdot): X\times X \to \Rgz $ to a symmetric function $(\cdot \mid \cdot): \overline{X}\times \overline{X} \to \Rgz \cup \{\infty\} $ by letting 
\begin{align}
\label{eq:ext-cp}
(x \mid y) 
&=\inf \{\liminf_{i \to \infty} (x_i\mid y_i) : (x_i) \in x , (y_i)\in y   \}
\end{align}
for  $x, y \in \overline{X}$.
\end{defin}

\begin{lem}
	\label{fact:ext-gromov-prod}
\begin{enumerate}[{\rm (i)}]
\item 
\label{fact:item:gp:npc}
There exists a non-decreasing function $\overline{\rho_1}: \Rgz \cup \{\infty \}\to \Rgz\cup \{\infty \}$ such that 
$\overline{\rho_1} (\Rgz) \subset \Rgz$, $\overline{\rho_1}(\infty) =\infty$ and 
$\min \{ (x\mid y) , (y \mid z)\}\leq \overline{\rho_1}((x\mid z) )$ for every $x,y,z \in \overline{X}$.

\item 
\label{fact:item:gp:proper}
There exists a non-decreasing function 
$\overline{\rho_2} : \Rgz \to \Rgz$ such that 
$(x \mid y) \leq \overline{\rho_2}(d(x_0,x))$ for every 
$x \in {X}$ and $y \in \overline{X}$.

\item 
\label{fact:item:gp-met-gp}
There exists a non-decreasing function $\overline{\rho_3} : \Rgz \times \Rgz \to \Rgz$ such that $(x\mid y) \leq \overline{\rho_3} ((x\mid z),d(z,y) )$
for every $x \in \overline{X}$ and $y,z \in X$.

\item 
\label{fact:item:gp:finite}
For every $x \in {X}$ and $y \in \overline{X}$,
$(x \mid y) < \infty$.
\item 
\label{fact:item:gp:infty}
For $x,y \in \overline{X}$,
$(x\mid y) = \infty $ if and only if $x,y \in \partial X$ and $x=y$.

\item
\label{fact:item:bdry-conv}
For $x \in \partial X$ and a sequence $(x_i)$ in $X$,
$(x_i) \in x$ if and only if $(x_i \mid x) \to \infty $ as $i \to \infty$.

\item 
\label{fact:item:>R}
For every $x \in \overline{X}$ and $(x_i) \in x$ and  $R>0$,
if $(x\mid x) >R$, then there exists $i_0\in \mathbb{N}$ such that $(x_i \mid x) >R$ for every $i\geq i_0$.

\end{enumerate}
\end{lem}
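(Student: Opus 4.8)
The plan is to prove each of the seven items by unwinding the definition of the extended product $(\ref{eq:ext-cp})$ and invoking the three pre-controlled product axioms, taking limits carefully. Throughout, the central technical point is that the extended product is defined as an infimum over representing sequences of a $\liminf$, so for any $x,y \in \overline{X}$ and any $\varepsilon > 0$ I may choose representatives $(x_i) \in x$, $(y_i) \in y$ with $\liminf_i (x_i \mid y_i)$ within $\varepsilon$ of $(x \mid y)$, and conversely every pair of representatives gives a lower bound $(x \mid y) \le \liminf_i (x_i \mid y_i)$. I would record this observation first, since every item reduces to passing the corresponding finite-level statement through these limits.

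For item $(\ref{fact:item:gp:npc})$ I would set $\overline{\rho_1}(t) = \rho_1(t)$ for $t \in \Rgz$ and $\overline{\rho_1}(\infty) = \infty$, replacing $\rho_1$ by a genuinely non-decreasing majorant and arranging $\rho_1(t) \to \infty$ if necessary so that $\overline{\rho_1}(\Rgz) \subset \Rgz$ is consistent. Given $x,y,z \in \overline{X}$, I would pick representatives and apply (CP\ref{gp:npc}) entrywise, $\min\{(x_i \mid y_i),(y_i \mid z_i)\} \le \rho_1((x_i \mid z_i))$; the delicate step is choosing the three sequences jointly so that the $\liminf$'s on the left and the infimum on the right align, which I would handle by fixing near-optimal representatives for the pair realizing $(x \mid z)$ and using the same $(y_i)$ on both sides. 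Items $(\ref{fact:item:gp:proper})$ and $(\ref{fact:item:gp-met-gp})$ are easier: when $x \in X$ we may take the constant sequence $(x_i) \in x$, so (CP\ref{gp:proper}) gives $(x_i \mid y_i) \le \rho_2(d(x_0,x))$ for every $i$ and hence $(x \mid y) \le \rho_2(d(x_0,x))$ directly; for $(\ref{fact:item:gp-met-gp})$ I would similarly use constant sequences at $z$ and the continuity of $d$ to pass (CP\ref{gp:higson})-type control (more precisely the triangle-inequality consequence packaged in $\rho_3$) to the limit, setting $\overline{\rho_2} = \rho_2$ and $\overline{\rho_3}$ built from $\rho_3$.

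Item $(\ref{fact:item:gp:finite})$ follows immediately from $(\ref{fact:item:gp:proper})$ since $\overline{\rho_2}(d(x_0,x)) < \infty$ for $x \in X$. For $(\ref{fact:item:gp:infty})$, the forward direction splits into ruling out the cases where at least one of $x,y$ lies in $X$ (handled by $(\ref{fact:item:gp:finite})$) and where $x \ne y$ in $\partial X$ (contradicting the definition of $\sim$, since $x \ne y$ means no representatives have $(x_i \mid y_i) \to \infty$); the reverse direction observes that $x = y \in \partial X$ lets us take $(x_i) = (y_i)$ along a sequence witnessing $(x_i \mid x_j) \to \infty$, forcing $(x \mid y) = \infty$. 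Item $(\ref{fact:item:bdry-conv})$ is essentially the definition of $\sim$ combined with $(\ref{eq:ext-cp})$: I would show $(x_i) \in x$ iff $(x_i)$ is $\sim$-equivalent to a fixed representative of $x$, which by (CP\ref{gp:npc}) transitivity is equivalent to $(x_i \mid x) \to \infty$. Finally $(\ref{fact:item:>R})$ is a direct consequence of $(\ref{fact:item:bdry-conv})$ together with the infimum definition: if $(x \mid x) > R$ then for $x \in \partial X$ we have $(x \mid x) = \infty$ by $(\ref{fact:item:gp:infty})$ and any $(x_i) \in x$ satisfies $(x_i \mid x) \to \infty$ by $(\ref{fact:item:bdry-conv})$, while for $x \in X$ the constant sequence makes $(x_i \mid x) = (x \mid x) > R$ for all $i$. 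The main obstacle I anticipate is the joint choice of representatives in $(\ref{fact:item:gp:npc})$, where the $\liminf$/$\inf$ interplay must be managed so that the inequality survives both the infimum on the right and the minimum on the left simultaneously; an $\varepsilon$-argument selecting representatives subordinate to the optimal pair for $(x \mid z)$ should resolve it.
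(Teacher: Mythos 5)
Your overall strategy and your treatment of items (ii), (iv), (vi) and (vii) match the paper's, but there are two concrete gaps. The first is in item (i): the choice $\overline{\rho_1}(t)=\rho_1(t)$ does not work. The $\varepsilon$-argument you outline --- picking representatives $(x_i),(z_i)$ with $\liminf_i(x_i\mid z_i)\le (x\mid z)+\varepsilon$ and applying (CP1) termwise --- only yields $\min\{(x\mid y),(y\mid z)\}\le\rho_1((x\mid z)+2\varepsilon)$ for every $\varepsilon>0$. Since $\rho_1$ is merely non-decreasing (not right-continuous) and the infimum defining $(x\mid z)$ need not be attained, you cannot let $\varepsilon\to 0$ and land on $\rho_1((x\mid z))$. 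The lemma is existential in $\overline{\rho_1}$, so the fix is cheap: the paper sets $\overline{\rho_1}(t)=\rho_1(t+1)$ and then runs exactly the contradiction you have in mind, concluding that $(x_i\mid z_i)>(x\mid z)+1$ eventually for \emph{every} pair of representatives, which contradicts the infimum. Your proposed adjustments (a non-decreasing majorant tending to infinity) do not address this right-continuity issue, which is precisely the delicate point you correctly anticipated but did not resolve.

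The second gap is item (iii), which is not a limit of (CP3) along constant sequences, and whose $\overline{\rho_3}$ cannot be built from $\rho_3$ alone. The inequality $(x\mid y)\le\overline{\rho_3}((x\mid z),d(z,y))$ bounds the \emph{product}, whereas (CP3) bounds $d(x_0,\cdot)$; moreover $y,z\in X$, so there is no limit to pass in those slots anyway. The paper's argument needs all three axioms via a case split: either $(x\mid y)\le\overline{\rho_1}((x\mid z))$ and you are done, or item (i) forces $(y\mid z)\le\overline{\rho_1}((x\mid z))$, whence (CP3) gives $d(x_0,y)\le\rho_3(\overline{\rho_1}((x\mid z)),d(y,z))$ and item (ii) gives $(x\mid y)\le\overline{\rho_2}(d(x_0,y))$; one then takes $\overline{\rho_3}(s,t)=\max\{\overline{\rho_1}(s),\overline{\rho_2}(\rho_3(\overline{\rho_1}(s),t))\}$. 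That case split is the missing idea. A smaller quibble on (v): to get $(x\mid x)=\infty$ for $x\in\partial X$ it is not enough to take $(x_i)=(y_i)$, since that only bounds the infimum from above; you must note that \emph{every} pair of representatives of $x$ is $\sim$-equivalent, so every $\liminf$ in the defining infimum is infinite.
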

\begin{proof}
Let $\rho_1$, $\rho_2$ and $\rho_3$ be functions in (CP\ref{gp:npc}), (CP\ref{gp:proper}) and (CP\ref{gp:higson}), respectively.

(\ref{fact:item:gp:npc})
Define a non-decreasing function $\overline{\rho_1}: \Rgz\cup \{\infty\} \to \Rgz\cup\{\infty\}$ by $\overline{\rho_1} (t) = \rho_1 (t+1) $ if $t\in \Rgz$ and $\overline{\rho_1}(\infty) =\infty$.
To show that this is a required function, suppose contrary that there are $x,y,z \in \overline{X}$ such that 
 $\overline{\rho_1}((x\mid z))< \min \{ (x\mid y), (y\mid z)\} $.
Then for any $(x_i) \in x$, $(y_i) \in y$, $(z_i) \in z$, there exists  $i_0 \in \mathbb{N}$ such that
for every $i\geq i_0$,
\begin{align*}
\rho_1((x\mid z) +1 ) = \overline{\rho_1} ((x\mid z)) <  \min \{(x_i\mid y_i), (y_i \mid z_i) \} \leq \rho_1((x_i\mid z_i)),
\end{align*}
and hence $(x\mid z) +1 < (x_i \mid z_i)$.
This contradicts the definition of $(x\mid z)$. 

(\ref{fact:item:gp:proper}) 
A function $\rho_2$ in (CP\ref{gp:proper}) is a required one
since $(x \mid y_i) \leq  \rho_2(d(x_0,x))$
for any  $x \in X$, $y \in \overline{X}$, 
$(y_i) \in y$ and $i \in \mathbb{N}$.

(\ref{fact:item:gp-met-gp}) 
Let $\overline{\rho_1}$ and $\overline{\rho_2}$ be functions as above.
Define $\overline{\rho_3} : \Rgz \times \Rgz \to \Rgz$
by 
$$\overline{\rho_3}(s,t)=\max\{\overline{\rho_1}(s), \overline{\rho_2}(\rho_3( \overline{\rho_1} (s),t)) \}$$
for $(s,t)\in \Rgz \times \Rgz $.
Then $\overline{\rho_3} $ is non-decreasing.
To show that $\overline{\rho_3} $ satisfies the required condition,
let $x \in \overline{X}$ and $y,z \in X$.
If $(x \mid y) \leq \overline{\rho_1}((x\mid z))$, then 
\begin{align*}
(x \mid y ) \leq \overline{\rho_1}((x\mid z)) \leq \overline{\rho_3} ((x\mid z),d(z,y) ).
\end{align*}
Thus we may assume $(x \mid y) > \overline{\rho_1}((x\mid z))$.
Since $\min \{ (x\mid y) , (y\mid z)\} \leq \overline{\rho_1}((x\mid z))$ and $(x \mid y) > \overline{\rho_1}((x\mid z))$, we have  
$(y\mid z) \leq \overline{\rho_1}((x\mid z))$, and hence
\begin{align*}
(x\mid y) &\leq \overline{\rho_2} (d(x_0,y)) 
\leq \overline{\rho_2} (\rho_3((y\mid z), d(y,z ))) 
\leq
\overline{\rho_2} (\rho_3( \overline{\rho_1} ((x\mid z)),d(y,z )))
\\
&
\leq \overline{\rho_3} ((x\mid z),d(z,y) ).
\end{align*}
Thus $\overline{\rho_3} $  is as required.

(\ref{fact:item:gp:finite})
It follows from (\ref{fact:item:gp:proper}).

(\ref{fact:item:gp:infty})  It follows from (\ref{fact:item:gp:finite}) and a straightforward argument.

(\ref{fact:item:bdry-conv})
Let $x \in \partial X$ and $(x_i)$ a sequence in $X$.
To show the ``only if'' part,
suppose that $(x_i) \in x$ and let $R>0$.
Since $(x_i) \in S_\infty (X)$,
there exists $i_0 \in \mathbb{N}$ such that
$\rho_1 (R)< (x_i\mid x_j)  $ for every $i,j \geq i_0$.
Let $i\geq i_0$ and take $(y_j) \in x$ arbitrarily.
Then there exists $i_1 \geq i_0$ such that $\rho_1 (R)< (x_j\mid y_j) $ for every $j \geq i_1$.
Then, for every $j \geq i_1$,
we have $R< (x_i \mid y_j) $ since 
$\rho_1 (R) < \min \{(x_i \mid x_j) , (x_j\mid y_j) \} \leq \rho_1( (x_i \mid y_j))$.
Thus $R\leq \liminf_{j\to \infty} (x_i \mid y_j)$, and hence $R\leq (x_i \mid x)$.

Conversely, suppose that 
$(x_i\mid x) \to \infty$ as $i\to \infty$.
Take $(y_i) \in x$ and it suffices to show that $(x_i\mid y_i)\to \infty$. 
Let $R>0$.
Choose  $i_0 \in \mathbb{N}$ satisfying $\rho_1(R)<\min \{(x_i \mid x) , (y_i \mid y_j)\} $
 for every $i,j \geq i_0$.
Let $i\geq i_0$.
Since $\rho _1 (R)<(x_i \mid x) $ and $(y_i) \in x$,
we have $\rho_1 (R)<(x_i\mid y_{i_1}) $ for some $i_1 \geq i_0$.
Then $\rho_1(R) < \min \{ (x_i\mid y_{i_1}), (y_{i_1} \mid y_i) \} \leq \rho_1((x_i\mid y_i))$, and hence $R<  (x_i\mid y_i)$.
Therefore $(x_i\mid y_i)\to \infty$.

(\ref{fact:item:>R}) Let $x \in \overline{X}$, $(x_i) \in x$ and $R>0$ with 
$(x\mid x) >R$.
If $x \in X$, then $(x_i \mid x) >R$ for every $i\in \mathbb{N}$ since $x_i =x$.
If $x \in \partial X$, the conclusion follows from (\ref{fact:item:bdry-conv}).
\end{proof}

\begin{defin}
\label{defin:Vn}
For $n\in \mathbb{N}$,
let 
\begin{align*}
V_n =\{ (x,y)\in \overline{X} \times \overline{X} : (x\mid y ) >n\} \cup \{ (x,y) \in X\times X : d(x,y)<1/n\}.
\end{align*}
\end{defin}

For $U , V \subset\overline{X}\times \overline{X}$ and $a \in \overline{X}$,
let 
\begin{align*}
U^{-1} &=\{ (x,y) \in \overline{X}\times \overline{X}: (y,x)\in U\},
\\
U\circ V &=\{ (x,z) \in \overline{X}\times \overline{X}: \text{there exists } y \in X \text{ such that } (x,y) \in U \text{ and } (y,z)\in V \}, \text{ and }
\\
U[a] &= \{x \in \overline{X} : (x,a) \in U\},
\end{align*}
and let $\Delta_{\overline{X}} = \{ (x,x) \in \overline{X}\times \overline{X} : x \in \overline{X}\}$.

The following lemma shows that $\{V_n : n\in \mathbb{N}\}$ is a base of a metrizable uniformity on $\overline{X}$ (see \cite[Proposition 8.1.14 and Theorem 8.1.21]{engelking:89}).

\begin{lem}
\begin{enumerate}[{\rm (i)}]
\item  $V_n = V_n^{-1}$ and $V_{n+1} \subset V_n$ for every $n \in \mathbb{N}$.
\item  For every $n\in \mathbb{N}$, there exists $m \in \mathbb{N}$ such that $V_m \circ V_m \subset V_n$.
\item  $\bigcap _{n\in \mathbb{N}} V_n =\Delta_{\overline{X}} $.
\end{enumerate}

\end{lem}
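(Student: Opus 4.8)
The plan is to read off all three statements from the definition of $V_n$ together with the extended estimates recorded in Lemma \ref{fact:ext-gromov-prod}. Item (i) requires no real work: each of the two sets defining $V_n$ is symmetric, because the extended product $(\cdot\mid\cdot)$ and the metric $d$ are symmetric, so $V_n=V_n^{-1}$; and $V_{n+1}\subset V_n$ holds termwise, since $(x\mid y)>n+1$ implies $(x\mid y)>n$ and $d(x,y)<1/(n+1)$ implies $d(x,y)<1/n$.

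Item (ii) is the core of the lemma and the place where I expect the only genuine effort. Fixing $n$, I would take $(x,z)\in V_m\circ V_m$, so that there is $y\in X$ (note $y\in X$ by the definition of $\circ$) with $(x,y),(y,z)\in V_m$, and split each membership into its ``large product'' and ``small distance'' alternatives, giving four cases. If both come from large products, $(x\mid y)>m$ and $(y\mid z)>m$, then the extended version of (CP\ref{gp:npc}), namely Lemma \ref{fact:ext-gromov-prod}(i), gives $m<\overline{\rho_1}((x\mid z))$, which forces $(x\mid z)>n$ as soon as $m\ge\overline{\rho_1}(n)$, since $\overline{\rho_1}$ is non-decreasing. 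If one is a large product and the other a small distance, say $(x\mid y)>m$ while $z\in X$ and $d(y,z)<1/m$, then (with $m\ge 1$) Lemma \ref{fact:ext-gromov-prod}(iii) yields
\[
m<(x\mid y)\le\overline{\rho_3}\bigl((x\mid z),d(y,z)\bigr)\le\overline{\rho_3}\bigl((x\mid z),1\bigr),
\]
so that $(x\mid z)>n$ whenever $m\ge\overline{\rho_3}(n,1)$; here $(x\mid z)$ is finite by Lemma \ref{fact:ext-gromov-prod}(iv) because $z\in X$, so the estimate is meaningful, and the mirror subcase follows by applying Lemma \ref{fact:ext-gromov-prod}(iii) to the reversed triple. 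Finally, if both memberships are small distances, then $x,y,z\in X$ and the triangle inequality gives $d(x,z)<2/m$, so $(x,z)\in V_n$ once $m\ge 2n$. Thus $m=\max\{1,\,2n,\,\lceil\overline{\rho_1}(n)\rceil,\,\lceil\overline{\rho_3}(n,1)\rceil\}$ works. The main obstacle is precisely this bookkeeping: arranging the case split and choosing a single $m$ large enough to meet all three thresholds at once.

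For item (iii), I would prove the two inclusions separately. The inclusion $\Delta_{\overline{X}}\subset\bigcap_n V_n$ is immediate: if $x\in X$ then $d(x,x)=0<1/n$ for all $n$, while if $x\in\partial X$ then $(x\mid x)=\infty>n$ for all $n$ by Lemma \ref{fact:ext-gromov-prod}(v). For the reverse inclusion, take $(x,y)\in\bigcap_n V_n$. If at least one of $x,y$ lies in $\partial X$, the small-distance alternative is never available, so $(x\mid y)>n$ for every $n$, i.e.\ $(x\mid y)=\infty$, and Lemma \ref{fact:ext-gromov-prod}(v) then forces $x=y$. If instead $x,y\in X$, then $(x\mid y)<\infty$ by Lemma \ref{fact:ext-gromov-prod}(iv), so the large-product alternative fails for all large $n$, leaving $d(x,y)<1/n$ for all large $n$ and hence $x=y$. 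In either case $(x,y)\in\Delta_{\overline{X}}$, which completes the argument.
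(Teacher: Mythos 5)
Your proposal is correct and follows essentially the same route as the paper: the same choice $m=\max\{2n,\overline{\rho_1}(n),\overline{\rho_3}(n,1)\}$ (up to ceilings) with the same case split via Lemma \ref{fact:ext-gromov-prod}, and the same two inclusions for (iii). The only differences are cosmetic — you spell out the mirror subcase and organize (iii) by whether the points lie in $\partial X$, while the paper argues via which alternative occurs infinitely often.
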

\begin{proof}
(i) Clear.

(ii) Let $\overline{\rho_1}$ and $\overline{\rho_3}$ be functions in (\ref{fact:item:gp:npc}) and (\ref{fact:item:gp-met-gp}) of Lemma \ref{fact:ext-gromov-prod}, respectively, and
put 
\begin{align*}
m=\max\{\overline{\rho_3}(n,1),  \overline{\rho_1}(n), 2n\}.
\end{align*}
To show $V_m \circ V_m \subset V_n$,
let $(x,z) \in V_m \circ V_m$ and 
take $y \in X$ satisfying 
$(x,y), (y,z) \in V_m$.
If $(x\mid y)>m$ and $(y\mid z)>m$, then 
$(x \mid z ) >n$ since $\overline{\rho_1}(n) \leq m < \min \{(x\mid y),(y\mid z) \} \leq \overline{\rho_1}((x \mid z ))$.
If $(x\mid y) >m$ and $d(y,z) <1/m $, 
then $(x\mid z) >n$ since 
$\overline{\rho_3}(n,1) \leq m < (x\mid y) \leq \overline{\rho_3} ((x\mid z),d(z,y)) \leq  \overline{\rho_3} ((x\mid z),1) $.
If $d(x,y) < 1/m$ and $d(x,y) < 1/m$, then $d(x,z) <1/n$ since $m\geq 2n$. In each case, we have $(x,z) \in V_n$ and thus
 $V_m \circ V_m \subset V_n$.

(iii) Since $d(x,x) =0$ for every $x\in X$ and $(x\mid x) =\infty$ for every $x \in \partial X$ by (\ref{fact:item:gp:infty}) of Lemma \ref{fact:ext-gromov-prod}, we have $\Delta_{\overline{X}} \subset \bigcap _{n\in \mathbb{N}} V_n$.
To show  $\bigcap _{n\in \mathbb{N}} V_n \subset \Delta_{\overline{X}} $,
let $(x,y) \in \bigcap _{n\in \mathbb{N}} V_n $.
If $\{n \in \mathbb{N} : d(x,y) <1/n \}$ is infinite,
then $x=y$.
Otherwise, there exists $n_0 \in \mathbb{N}$ such that 
$(x \mid y) >n$ for every $n \geq n_0$, which implies $(x\mid y) =\infty$.
Hence $x=y$ by (\ref{fact:item:gp:infty}) of Lemma \ref{fact:ext-gromov-prod}.
\end{proof}

Let $\d$ be a metric on $\overline{X}$ that induces the uniformity generated by $\{V_n : n \in \mathbb{N}\}$.
Then $\d$ satisfies the following two conditions:
\begin{itemize}
	\item For every $\varepsilon>0$, there exists $m \in \mathbb{N}$ such that $V_m \subset \{(x,y) \in \overline{X}\times \overline{X} : \d (x,y) <\varepsilon\}$. 
	\item For every $n \in \mathbb{N}$, there exists $\delta >0$ such that $\{(x,y) \in \overline{X}\times \overline{X} : \d(x,y) <\delta \} \subset V_n$.
\end{itemize}

\begin{defin}
\label{def:gromov-bdry}
Let $\overline{X}$ be equipped with the topology $\mathcal{T}_{\d}$ generated by the metric $\d$, that is, by the base $\{V_n[x] : x \in \overline{X}, n \in \mathbb{N}\}$.
We call the subspace $\partial X$ of $\overline{X}$  the \emph{Gromov boundary} of $X$ with respect to $(\cdot \mid \cdot)$.
\end{defin}

\begin{lem}
\label{lem:Gromov-cptn}
\begin{enumerate}[{\rm (i)}]
\item 
\label{fact:item:cptn-top}
The relative topology $\mathcal{T}_{\d}\rest_X$ on $X$ with respect to $\overline{X}$
coincides with the topology $\mathcal{T}_d$ induced by the metric $d$. 
\item 
$X$ is a dense open subset in $\overline{X}$.

\item 
If $(X,d)$ is complete, then so is $(\overline{X}, \d)$. 

\end{enumerate}
\end{lem}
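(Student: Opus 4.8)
The plan is to verify the three items in order, with item (i) providing the foundation for the others. For item (i), I would show the two relevant neighborhood bases agree on $X$. The topology $\mathcal{T}_{\d}\rest_X$ has a base given by sets $V_n[x]\cap X$ for $x\in X$, while $\mathcal{T}_d$ has the usual metric balls $B_d(x,S)$. Fix $x\in X$. By the definition of $V_n$ in Definition \ref{defin:Vn}, any point $y\in X$ with $d(x,y)<1/n$ lies in $V_n[x]$, so every $d$-ball around $x$ contains a $V_n[x]\cap X$, giving continuity of the identity in one direction. For the reverse inclusion I would use property (CP\ref{gp:higson}) (equivalently Lemma \ref{fact:ext-gromov-prod}(\ref{fact:item:gp-met-gp})): if $y\in V_n[x]\cap X$ with $(x\mid y)>n$ but $y$ far from $x$ in $d$, then $d(x_0,y)$ is controlled by $\rho_3$, so the points of $V_n[x]\cap X$ not already close to $x$ in $d$ stay inside a fixed bounded set; intersecting with a suitable $V_m[x]$ forces genuine $d$-closeness. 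Making this precise shows each $\mathcal{T}_d$-neighborhood of $x$ contains some $V_n[x]\cap X$, so the two topologies coincide.

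For item (ii), openness of $X$ in $\overline{X}$ follows from item (i) together with the observation that a point $x\in X$ has a $\overline{X}$-neighborhood contained in $X$: by Lemma \ref{fact:ext-gromov-prod}(\ref{fact:item:gp:finite}) we have $(x\mid y)<\infty$ for all $y\in\overline{X}$, so choosing $n$ larger than $(x\mid x)$ (which is finite) and invoking Lemma \ref{fact:ext-gromov-prod}(\ref{fact:item:>R}) excludes boundary points from $V_n[x]$, whence $V_n[x]\subset X$. For density, I would take an arbitrary boundary point $\xi\in\partial X$, represent it by a sequence $(x_i)\in\xi$, and show $x_i\to\xi$ in $\overline{X}$: by Lemma \ref{fact:ext-gromov-prod}(\ref{fact:item:bdry-conv}), $(x_i)\in\xi$ is equivalent to $(x_i\mid\xi)\to\infty$, so for each $n$ eventually $(x_i,\xi)\in V_n$, i.e. $x_i\in V_n[\xi]$. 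Thus every neighborhood of $\xi$ meets $X$, and since points of $X$ are trivially in $\overline{X}$, the closure of $X$ is all of $\overline{X}$.

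For item (iii), assume $(X,d)$ is complete and let $(z_k)$ be a $\d$-Cauchy sequence in $\overline{X}$; I must produce a limit. I would split into cases according to whether the sequence eventually stays in a bounded part of $X$ or escapes to the boundary. If $(z_k)$ has a subsequence lying in $X$ that is $d$-bounded, then by item (i) the $\d$-Cauchy condition restricts to a $d$-Cauchy condition on that bounded set (using the control from (CP\ref{gp:higson}) to pass between the two metrics on bounded regions), so completeness of $X$ gives a limit in $X$, and the Cauchy property forces the whole sequence to converge there. Otherwise the sequence is driven to the boundary: using the $\d$-Cauchy condition I would extract a subsequence $(z_{k_j})$, choose representatives, and build a single sequence in $X$ whose mutual products tend to infinity, so that it defines a point $\xi\in\partial X$; then I would check $z_k\to\xi$. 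The main obstacle is this last case, namely verifying that a $\d$-Cauchy sequence escaping to infinity genuinely determines an element of $S_\infty(X)/\!\sim$ and converges to it — this requires carefully translating $\d$-closeness (via the $V_n$'s) into the product-divergence conditions defining $S_\infty(X)$ and the relation $\sim$, and handling the bookkeeping when the $z_k$ are themselves boundary points rather than points of $X$. The key tools throughout will be Lemma \ref{fact:ext-gromov-prod}, especially parts (\ref{fact:item:gp:npc}), (\ref{fact:item:bdry-conv}) and (\ref{fact:item:>R}), to convert the uniform structure back into statements about the extended product.
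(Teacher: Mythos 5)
Your overall architecture (two inclusions for (i), openness plus density via Lemma \ref{fact:ext-gromov-prod}(\ref{fact:item:bdry-conv}) for (ii), and a case split on bounded versus escaping Cauchy sequences for (iii)) matches the paper's, but at three points you reach for the wrong tool or stop short. In (i), the reverse inclusion does not follow from (CP\ref{gp:higson}): that condition gives $d(x_0,y)\le\rho_3((x\mid y),d(x,y))$, an upper bound that is only useful when you already have upper bounds on $(x\mid y)$ and $d(x,y)$, so knowing $(x\mid y)>n$ and ``$y$ far from $x$'' yields nothing, and your claim that such $y$ stay in a fixed bounded set is unjustified. The correct (and shorter) argument uses (CP\ref{gp:proper}): for fixed $x$ one has $(x\mid y)\le\rho_2(d(x_0,x))$ for every $y$, so once $n\ge\rho_2(d(x_0,x))$ the product alternative in the definition of $V_n$ can never occur and $V_n[x]\cap X\subset B_d(x,1/n)$. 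The same misdiagnosis recurs in (ii): choosing $n>(x\mid x)$ does not exclude boundary points from $V_n[x]$, because nothing in the axioms gives $(x\mid y)\le(x\mid x)$, and Lemma \ref{fact:ext-gromov-prod}(\ref{fact:item:>R}) is irrelevant here; what you need is again Lemma \ref{fact:ext-gromov-prod}(\ref{fact:item:gp:proper}), i.e.\ $(y\mid x)\le\overline{\rho_2}(d(x_0,x))$ for all $y\in\overline{X}$, so that $n\ge\overline{\rho_2}(d(x_0,x))$ forces every $y\in V_n[x]$ to satisfy $d(y,x)<1/n$ and hence $y\in X$.

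For (iii) you correctly identify the case split but explicitly leave the escaping case as an ``obstacle,'' and that is where the real work lies. The paper handles it by (a) when infinitely many terms lie in $X$ but no subsequence is bounded, passing to a subsequence along which $d(x_0,x_{i_j})$ increases by at least $1$ at each step, so that for distinct indices the distance alternative $d<1/k$ in $V_k$ is impossible and membership in $V_k$ forces the product to exceed $k$, while the diagonal terms are controlled via $d(x_0,x)\le\rho_3((x\mid x),0)$; and (b) when almost all terms lie in $\partial X$, approximating each boundary term $x_{i(k)}$ by a point $y^k_{j(k)}\in X$ with $(y^k_{j(k)}\mid x_{i(k)})$ large and chaining estimates through $\overline{\rho_1}\circ\overline{\rho_1}$ to show $(y^k_{j(k)})\in S_\infty(X)$ and that its class is the limit. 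Neither device appears in your sketch, so as written the completeness claim is not established. Also, in the bounded case you again cite (CP\ref{gp:higson}) where (CP\ref{gp:proper}) is what converts $\d$-Cauchy into $d$-Cauchy on a bounded set.
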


\begin{proof}
(i)
Since $ \{ (x,y) \in X\times X : d(x,y)<1/n\} \subset V_n$ for every $n \in \mathbb{N}$,
we have $\mathcal{T}_{\d}\rest_X \subset \mathcal{T}_d$.

To show $\mathcal{T}_d \subset \mathcal{T}_{\d}\rest_X$,
let $\rho_2$ be a function in (CP\ref{gp:proper}).
For any $x \in X$ and  $n \in \mathbb{N}$ with
$\rho_2(d(x_0,x))\leq n$,
we have $V_n[x] \subset B_{d} (x, \frac{1}{n})$.
Indeed, if $y \in V_n[x]$, then $d(y,x) <1/n $ since $(y,x) \in V_n$ and $(y\mid x) \leq \rho_2(d(x_0,x)) \leq n$, 
and thus $y \in B_{d} (x,\frac{1}{n})$.
Therefore $\mathcal{T}_d \subset \mathcal{T}_{\d}\rest_X$.

(ii) The fact that $X$ is dense in $\overline{X}$ follows from (\ref{fact:item:bdry-conv}) of Lemma \ref{fact:ext-gromov-prod}.
To show that $X$ is open in $\overline{X}$,
let $x \in X$. Let $\overline{\rho_2}$ be a function in (\ref{fact:item:gp:proper}) of Lemma \ref{fact:ext-gromov-prod}.
Choose $n \in \mathbb{N}$ satisfying $\overline{\rho_2} (d(x_0,x)) \leq n$.
Then for every $y \in V_n[x]$,
we have $(y,x) \in V_n$ and $(y \mid x) \leq \overline{\rho_2}(d(x_0,x) )\leq n$,
which imply $d(y,x)<1/n$ and $y \in X$.
Hence $V_n[x] \subset X$.
Therefore, $X$ is open in $\overline{X}$. 

(iii)
Assume that $(X,d)$ is complete. Let $(x_i)$ be a Cauchy sequence in $(\overline{X}, \d)$.
Then for every $n \in \mathbb{N}$ there exists $i_0 \in \mathbb{N}$ such that 
$(x_i,x_j) \in V_n$ for every $i,j \geq i_0$.
It suffices to show that $(x_i)$ has a convergent subsequence in $(\overline{X},d_{\overline{X}})$.
To do this, we consider three cases.

Case 1.
There exists $R>0$ such that 
$\{ i \in \mathbb{N}:x_i \in B_d (x_0,R) \}$ is infinite.
In this case, we can take a subsequence $(x_{i_j})$ of $(x_i)$
such that $x_{i_j} \in B_d(x_0, R)$ for every $j \in \mathbb{N}$.
Let $\rho_2$ be a function in (CP\ref{gp:proper}).
To show that $(x_{i_j})$ is a Cauchy sequence in $(X,d)$, 
take an arbitrary $n \in \mathbb{N}$ with $\rho_2(R) \leq n$.
Since $(x_{i_j})$ is a Cauchy sequence in $(\overline{X}, \d)$,
there exists $j_0 \in \mathbb{N}$ such that 
$ (x_{i_j}, x_{i_k})  \in V_n$
for every $j,k\geq j_0$.
Then, for every $j,k\geq j_0$, 
the inequality 
\begin{align*}
(x_{i_j} \mid  x_{i_k}) \leq  \rho_2 (   d(x_0,x_{i_j})) \leq n
\end{align*}
implies $d(x_{i_j}, x_{i_k})  <1/n $.
Therefore $(x_{i_j})$ is a Cauchy sequence in $(X,d)$.
Since $(X,d)$ is complete, $(x_{i_j})$ is convergent in $(X,d)$ and hence in $(\overline{X},\d)$ by (\ref{fact:item:cptn-top}).

Case 2.
The set $\{ i \in \mathbb{N}:x_i \in B_d (x_0,R) \}$ is finite for any $R>0$
and $\{ i \in \mathbb{N}:x_i \in X \}$ is infinite.
In this case, by using a function $\rho_3$ in 
(CP\ref{gp:higson}),
take an increasing sequence $(i_j)$ in $\mathbb{N}$ inductively 
as follows:
Choose $i_1 \in \{l \in \mathbb{N} :  x_l \in  X \setminus B_d(x_0,\rho_3 (1,0)+1)\}$.
If $j\geq 2$ and $i_{j-1} \in \mathbb{N}$ has been chosen, then choose
\begin{align*}
i_j\in \{l \in \mathbb{N} : l > i_{j-1},\  x_l \in X \setminus B_d(x_0, \max\{\rho_3 (j,0), d(x_0,x_{i_{j-1}}) \} +1)\}.
\end{align*}

Since $(x_{i_j})$ is a Cauchy sequence in $(\overline{X},\d)$,
there exists an increasing sequence $(j(k))_{k\in \mathbb{N}}$ in $\mathbb{N}$ such that, 
for each $k \in \mathbb{N}$, 
$k\leq j(k)$ and $(x_{i_j}, x_{i_{j'}}) \in V_{k}$ for every $j,j'\geq j(k)$.

Let $k \in \mathbb{N}$.
We show that $(x_{i_{j(l)}}\mid x_{i_{j(m)}} ) > k$ for every $l,m \geq k$.
If $m>l\geq k$, then $j(m) > j(l) \geq j(k)$, which implies $(x_{i_{j(l)}}, x_{i_{j(m)}}) \in V_{k}$ and $d(x_{i_{j(l)}}, x_{i_{j(m)}})\geq d(x_0,x_{i_{j(m)}} ) - d(x_0,x_{i_{j(l)}}) \geq  1 > 1/k$, and hence $(x_{i_{j(l)}}\mid x_{i_{j(m)}}) > k $.
If $m=l \geq k$, then 
\begin{align*}
\rho_3(k,0)  
&<d(x_0,x_{i_{j(l)}})
\leq  \rho_3((x_{i_{j(l)}}\mid x_{i_{j(l)}}), d(x_{i_{j(l)}}, x_{i_{j(l)}}) ) 
\\
&=\rho_3((x_{i_{j(l)}}\mid x_{i_{j(l)}}), 0 ),
\end{align*}
which implies 
$k< (x_{i_{j(l)}}\mid x_{i_{j(l)}})  =(x_{i_{j(l)}}\mid x_{i_{j(m)}}) $.

Hence we have that $(x_{i_{j(k)}}) \in S_\infty(X)$ and 
$(x_{i_{j(k)}})$ converges to $[(x_{i_{j(k)}})] \in \partial X$ in $(\overline{X},d_{\overline{X}})$ by (\ref{fact:item:bdry-conv}) of Lemma \ref{fact:ext-gromov-prod}.

Case 3.
The set  $\{ i \in \mathbb{N}:x_i \in X \}$ is finite.
In this case,  $\{ i \in \mathbb{N}:x_i \in \partial X \}$ is infinite.
Take a function $\overline{\rho_1}$ in 
(\ref{fact:item:gp:npc}) of Lemma \ref{fact:ext-gromov-prod}.
We may assume that $t\leq \overline{\rho_1}(t)$ for any $t\in \Rgz$ by replacing $\overline{\rho_1}(t)$ with $\max\{t,\overline{\rho_1}(t)\}$.
Let 
$n_k = \overline{\rho_1}(\overline{\rho_1}(k)) $
for each $k \in \mathbb{N}$.
Since $(x_{i})$ is a Cauchy sequence in $(\overline{X},\d)$,
there exists an increasing sequence $(i(k))_{k\in \mathbb{N}}$ in $\mathbb{N}$ such that, 
for each $k \in \mathbb{N}$, 
$x_{i(k)} \in \partial X$ and $(x_{i}, x_{i'}) \in V_{n_k}$ for every $i,i'\geq i(k)$.
Then $(x_{i(l)} \mid x_{i(m)}) > n_k$ for any $l,m \geq k$ since 
$x_{i(l)}, x_{i(m)} \in \partial X$.

For $k \in \mathbb{N}$, fix $(y_i^k)_{i \in \mathbb{N}}\in x_{i(k)}$. 
Since $(x_{i(k)}\mid x_{i(k)}) >n_k$, by (\ref{fact:item:>R}) of Lemma \ref{fact:ext-gromov-prod},
we may take $j(k) \in \mathbb{N}$ such that $(y_{j(k)}^k \mid x_{i(k)}) >n_k$.

To show that  $(y_{j(k)}^k)_{k\in \mathbb{N}} \in S_\infty(X)$, let $k \in \mathbb{N}$.
Then, for every $l,m \geq k$,  
\begin{align*}
\overline{\rho_1}(\overline{\rho_1}(k)) 
&=n_k
<\min\{(y_{j(l)}^l \mid x_{i(l)}), 
(x_{i(l)} \mid x_{i(m)}),
(y_{j(m)}^{m} \mid x_{i(m)})   \}
\\
&\leq \overline{\rho_1} (\overline{\rho_1}((y_{j(l)}^l \mid y_{j(m)}^{m}) )),
\end{align*}
and hence we have $k < (y_{j(l)}^l \mid y_{j(m)}^{m})$.
Therefore  $(y_{j(k)}^k)_{k\in \mathbb{N}} \in S_\infty(X)$.

Let $x=[(y_{j(k)}^k)_{k\in \mathbb{N}}]_\sim \,( \in \partial X) $.
To show that $(x_{i(k)})$ converges to $x$ in $(\overline{X},\d)$, let 
$l \in \mathbb{N}$.
Since $(y_{j(k)}^k)_{k\in \mathbb{N}} \in x \in \partial X$, by (\ref{fact:item:bdry-conv}) of Lemma \ref{fact:ext-gromov-prod} 
and the fact that $(y_{j(k)}^k)_{k\in \mathbb{N}}\in S_\infty(X)$, there exists $m \in \mathbb{N}$ such that $m \geq l$ and $\min \{(y_{j(n)}^n\mid x) ,(y_{j(n)}^n\mid y_{j(n)}^n)\} >n_l$ for every $n\geq m$.
Then, for every $n \geq m$,
we have
\begin{align*}
\overline{\rho_1}(\overline{\rho_1}(l)) =n_l
<\min\{(y_{j(n)}^n\mid x),
 (y_{j(n)}^n\mid y_{j(n)}^n),
 (y_{j(n)}^n\mid x_{i(n)})  \}
\leq \overline{\rho_1} (\overline{\rho_1}((x_{i(n)} \mid x) )),
\end{align*}
and hence $l < (x_{i(n)} \mid x)$,
which implies $ (x_{i(n)} , x) \in V_l$, that is,
 $x_{i(n)}\in V_l[x]$.
Therefore, 
$(x_{i(n)})_{n\in \mathbb{N}}$ converges to $x$.
\end{proof}

Now we prove our first main theorem.

\begin{thm}
\label{fact:item:proper->cpt}
The metric space $(\overline{X}, \d)$ is compact if and only if  $(X, d)$ is proper and $(\cdot \mid \cdot)$ satisfies (CP\ref{gp:tot-bdd}).
\end{thm}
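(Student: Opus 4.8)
The plan is to exploit that a metrizable space is compact precisely when it is sequentially compact, and to establish the two implications separately. For the ``only if'' direction, assume $(\overline{X},\d)$ is compact. Properness of $(X,d)$ should follow from a short closure argument: if $B\subset X$ is closed and bounded, say $B\subset\overline{B}_d(x_0,R)$, then $\mathrm{cl}_{\overline{X}}(B)$ is compact and disjoint from $\partial X$, since any boundary point is a limit of a sequence in $S_\infty(X)$ and such a sequence has $d(x_0,\cdot)\to\infty$ by Lemma \ref{fact:S-infty}(\ref{fact:item:infty}), which is incompatible with boundedness of $B$; hence $\mathrm{cl}_{\overline{X}}(B)=B$ is compact.

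To obtain (CP\ref{gp:tot-bdd}) I would first reformulate it as the equivalent statement that for every $R\ge 0$ there is $S\ge 0$ with the property that every $x$ satisfying $d(x_0,x)>S$ admits $y\in\overline{B}_d(x_0,S)$ with $(x\mid y)\ge R$ (from such thresholds a non-decreasing $\rho_4$ is assembled routinely). If this failed for some $R_0$, then for each $m\in\mathbb{N}$ I could choose $x_m$ with $d(x_0,x_m)>m$ but $(x_m\mid y)<R_0$ for all $y\in\overline{B}_d(x_0,m)$. Compactness gives a convergent subsequence $x_{m_k}$; as $d(x_0,x_{m_k})\to\infty$ its limit lies in $\partial X$, so $(x_{m_k})\in S_\infty(X)$ and hence $(x_{m_k}\mid x_{m_l})\to\infty$. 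But for fixed $l$ and $k$ large one has $x_{m_l}\in\overline{B}_d(x_0,m_k)$, forcing $(x_{m_k}\mid x_{m_l})<R_0$, a contradiction.

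For the ``if'' direction, assume $(X,d)$ proper and (CP\ref{gp:tot-bdd}) and prove sequential compactness. Using density of $X$ (Lemma \ref{lem:Gromov-cptn}) I reduce an arbitrary sequence in $\overline{X}$ to one lying in $X$. If a subsequence remains in some ball $\overline{B}_d(x_0,R)$, properness makes this ball compact and produces a subsequence converging in $X$, hence in $\overline{X}$ by Lemma \ref{lem:Gromov-cptn}(\ref{fact:item:cptn-top}). Otherwise $d(x_0,x_i)\to\infty$, and the goal is to extract a subsequence in $S_\infty(X)$, which then converges to its own class by Lemma \ref{fact:ext-gromov-prod}(\ref{fact:item:bdry-conv}). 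Two monotone inequalities organize this: (CP\ref{gp:npc}) shows that if $(x\mid c)$ and $(x'\mid c)$ both exceed $\rho_1(M)$ then $(x\mid x')>M$, while Lemma \ref{fact:ext-gromov-prod}(\ref{fact:item:gp-met-gp}) shows that $(x\mid y)\ge R$ together with $d(y,c)\le 1$ forces $(x\mid c)>\rho_1(M)$ once $R>\overline{\rho_3}(\rho_1(M),1)$. Fixing a level $M$ and such an $R$, I apply (CP\ref{gp:tot-bdd}) to the cofinitely many $x_i$ outside $\overline{B}_d(x_0,\rho_4(R))$ to obtain witnesses $y_i\in\overline{B}_d(x_0,\rho_4(R))$ with $(x_i\mid y_i)\ge R$, cover the compact ball by finitely many $d$-balls of radius at most $1$, and use pigeonhole to find an infinite index set whose witnesses share a common center $c$; the two inequalities then yield $(x_i\mid x_{i'})>M$ across that set. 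Performing this inside the previously selected set gives nested infinite sets, and a diagonal choice produces a subsequence with $(x_{i_k}\mid x_{i_l})\to\infty$.

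The crux is this final extraction in the unbounded case. The witnesses supplied by (CP\ref{gp:tot-bdd}) depend on the point $x_i$, so the real difficulty is to make them usable simultaneously; the resolution is to freeze the level $M$, confine all witnesses to a single compact ball (where properness is indispensable), and, via the two monotone inequalities, promote ``$x_i$ is close to a common center in the Gromov-product sense'' to ``$x_i$ and $x_{i'}$ have large mutual product'' before diagonalizing. By contrast, deducing properness and (CP\ref{gp:tot-bdd}) in the ``only if'' direction is comparatively routine.
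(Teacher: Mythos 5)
Your argument is correct, but it follows a genuinely different route from the paper's in both directions. For the ``if'' direction the paper does not prove sequential compactness from scratch: it invokes Lemma \ref{lem:Gromov-cptn}(iii) (completeness of $(\overline{X},\d)$, already established) and then proves total boundedness via a single covering statement (Claim \ref{claim:fact:item:proper->cpt}: every point far from $x_0$ has Gromov product $>n$ with some point of a fixed compact ball, which is then covered by finitely many $1/n$-balls). Your direct extraction of a subsequence in $S_\infty(X)$ --- freezing a level $M$, confining the (CP\ref{gp:tot-bdd})-witnesses to one compact ball, pigeonholing onto a common center $c$, promoting $(x_i\mid c)>\rho_1(M)$ to $(x_i\mid x_{i'})>M$ via (CP\ref{gp:npc}) and Lemma \ref{fact:ext-gromov-prod}(\ref{fact:item:gp-met-gp}), then diagonalizing over nested sets --- is sound (including the diagonal case $i=i'$ needed for membership in $S_\infty(X)$) and essentially re-derives, in sequential form, what the paper packages as completeness plus total boundedness; what it buys is independence from the completeness lemma, at the cost of redoing work that lemma already contains. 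For the ``only if'' direction your properness argument (the closure of a bounded set cannot meet $\partial X$ because boundary points are limits of sequences with $d(x_0,\cdot)\to\infty$) is a clean alternative to the paper's verification that complements of closed balls are open. For (CP\ref{gp:tot-bdd}) the paper gives a direct construction: it covers $\overline{X}$ by finitely many entourage-balls $V_{m_R}[z_i^R]$ with $V_{m_R}\circ V_{m_R}\subset V_{n_R}$, picks representatives $x_i^R\in X$, and reads off $\rho_4$ explicitly; your proof by contradiction (a failure sequence converges to a boundary point, hence lies in $S_\infty(X)$, forcing large mutual products that violate the assumed failure) is shorter and more transparent but non-constructive. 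One shared cosmetic point: both you and the paper set $\rho_4(t)=\sup\{S_R:R\le t\}$ without remarking that this supremum is finite (e.g.\ by restricting to integer thresholds), so this is not a gap relative to the paper's own standard of rigor.
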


\begin{proof}
Let  $\rho_3$, $\rho_4$, $\overline{\rho_1}$ and $\overline{\rho_2}$ be functions in (CP\ref{gp:higson}), (CP\ref{gp:tot-bdd}), 
(\ref{fact:item:gp:npc}) of Lemma \ref{fact:ext-gromov-prod}
and (\ref{fact:item:gp:proper}) of Lemma \ref{fact:ext-gromov-prod}, respectively.

To show the ``if'' part, suppose that $(X, d)$ is proper and $(\cdot \mid \cdot)$ satisfies (CP\ref{gp:tot-bdd}).
Since $(\overline{X}, \d)$ is complete, it suffices to show that $(\overline{X}, \d)$ is totally bounded.
To show this, let $\varepsilon>0$ and take $n \in \mathbb{N}$ satisfying 
$V_n \subset \{(x,y) \in \overline{X}\times \overline{X} : \d (x,y) <\varepsilon/2\}$.
Set $R= \max\{\overline{\rho_1}(n),\overline{\rho_2}(\rho_4(\overline{\rho_1}(n)+1)) \}$.

\begin{claim}
\label{claim:fact:item:proper->cpt}
For every $x \in \overline{X} \setminus \overline{B}_d(x_0,\rho_3(R,0))$ there exists $y \in \overline{B}_d(x_0,\rho_4(\overline{\rho_1}(n)+1))$ such that $(x\mid y)> n$.
\end{claim}
\begin{proof}[Proof of Claim \ref{claim:fact:item:proper->cpt}]
Let $x \in \overline{X} \setminus \overline{B}_d(x_0,\rho_3(R,0))$.
Since $x \in \partial X$ or 
\begin{align*}
\rho_3(R,0)< d(x_0,x) \leq \rho_3((x\mid x),d(x,x)) = \rho_3((x\mid x),0),
\end{align*}
we have $R< (x\mid x) $.
Let $(x_i) \in x$.
By (\ref{fact:item:>R}) of Lemma \ref{fact:ext-gromov-prod},
there exists $i \in \mathbb{N}$ such that $(x_i \mid x) >R$.
Since $\overline{\rho_2}(\rho_4(\overline{\rho_1}(n)+1))\leq R <(x_i \mid x) \leq \overline{\rho_2}(d(x_0,x_i))$,
we have $\rho_4(\overline{\rho_1}(n)+1)<d(x_0,x_i)$.
Thus, by (CP\ref{gp:tot-bdd}), there exists $y \in \overline{B}_d(x_0,\rho_4(\overline{\rho_1}(n)+1))$ such that 
$(x_i\mid y) > \overline{\rho_1} (n)$.
This and $\overline{\rho_1}(n) \leq R< (x_i\mid x)$ imply
$\overline{\rho_1}(n) < \min \{(x_i \mid x), (x_i \mid y) \} \leq \overline{\rho_1}( (x\mid y))$.
Therefore $(x\mid y) >n$.
\end{proof}

Let $S=\max\{\rho_3(R,0), \rho_4(\overline{\rho_1}(n)+1) \}$.
Since $(X,d)$ is proper, there exist finite $z_1, z_2,\dots z_k \in X$ such that 
$\overline{B}_d(x_0, S) \subset \bigcup_{i=1}^k B_d(z_i,1/n)$.

To show that $\overline{X} \subset \bigcup_{i=1}^k B_{\d} (z_i,\varepsilon)$,
let $x \in \overline{X}$.
If $x \in \overline{B}_d(x_0,S)$, then there exists $i\in \{1,\dots ,k\}$ such that $d(x,z_i)<1/n$, and hence $(x,z_i) \in V_n$,
which implies $\d (x,z_i) <\varepsilon/2<\varepsilon$.
Suppose that $x \notin \overline{B}_d(x_0,S)$.
By Claim \ref{claim:fact:item:proper->cpt},
there exists $y \in \overline{B}_d(x_0, S)$ such that $(x\mid y)>n$.
Since $y\in \overline{B}_d(x_0, S)$,
we have $d(y,z_i)<1/n$ for some $i\in \{1,\dots, k\}$.
Then $(x,y) , (y,z_i) \in V_n$, and hence
$\d(x,z_i)\leq \d(x,y) +\d (y,z_i) < \varepsilon$.
Thus we have $x \in  \bigcup_{i=1}^k B_{\d} (z_i,\varepsilon)$.
Therefore $(\overline{X},\d)$ is totally bounded, and hence it is compact.

For the ``only if'' part, suppose that $(\overline{X},\d)$ is compact.

To show that $(X,d)$ is proper, let $R>0$.
It suffices to show that $\overline{B}_d(x_0, R)$ is closed in $\overline{X}$.
Let $x \in \overline{X} \setminus \overline{B}_d(x_0,R)$.
We consider two cases to show that $V_n[x] \subset \overline{X} \setminus \overline{B}_d(x_0,R)$ for some $n \in \mathbb{N}$.

Case 1.  $x \in \partial X$.
In this case, choose $n \in \mathbb{N}$ satisfying $n> \overline{\rho_2}(R)$.
To show that $V_n[x] \subset \overline{X} \setminus \overline{B}_d(x_0,R)$,
let $y \in V_n[x]$. 
We may assume $y \in X$ since $\overline{B}_d(x_0,R) \cap \partial X =\varnothing$.
Since $(y,x) \in V_n$ and $x \in \partial X$,
we have $(y\mid x) >n$.
Then $\overline{\rho_2}(R) <n <(x\mid y) \leq \overline{\rho_2}(d(x_0,y))$, which implies $R< d(x_0,y)$, 
and hence $y \in \overline{X} \setminus \overline{B}_d(x_0,R)$. 
Thus $V_n [x] \subset \overline{X} \setminus \overline{B}_d(x_0,R)$.

Case 2. $x \in X$. Then $d(x_0,x) >R$.
Choose $n \in \mathbb{N}$ satisfying $1/n< d(x_0,x) -R$ and $n > \overline{\rho_2} (R)$.
To show that $V_n [x] \subset \overline{X} \setminus \overline{B}_d(x_0,R)$,
let $y \in V_n[x]$ and we may assume $y \in X$.
Since $(y,x) \in V_n$, we have $(y\mid x) >n$ or $d(y,x) <1/n$.
If $(y\mid x) >n$, then $R< d(x_0,y)$ as in Case 1.
If $d(y,x) <1/n$, then $d(x_0,y) \geq d(x_0,x) -d(x,y) > 1/n+R-1/n =R$.
Thus we have $y \in \overline{X} \setminus \overline{B}_d(x_0,R)$. 
Hence  $V_n [x] \subset \overline{X} \setminus \overline{B}_d(x_0,R)$.

Therefore $\overline{B}_d(x_0,R)$ is closed in $\overline{X}$,
and hence $X$ is proper.

We show that $(\cdot \mid \cdot)$ satisfies (CP\ref{gp:tot-bdd}).
For $R\geq 0$,
take $n_R, m_R \in \mathbb{N}$ so that $n_R>R$ and $V_{m_R} \circ V_{m_R} \subset V_{n_R}$.
Using the compactness of $\overline{X}$,
choose $z_1^R,\dots z_{k_R}^R \in \overline{X}$ satisfying
$\overline{X} = \bigcup _{i=1}^{k_R} V_{m_R}[z_i^R]$, and 
pick $x_i^R \in  V_{m_R}[z_i^R]\cap X$ for each $i \in \{1,\dots k_R\}$ applying the density of $X$  in $\overline{X}$.
Set \begin{align*}
S_R=\max\{\rho_3(R,1) , d(x_0,x_i^R) : i \in \{1,\dots k_R \}\}.
\end{align*}
Define $\rho_4 : \Rgz \to \Rgz$ by letting 
$\rho_4(t) =\sup \{ S_R : R\leq t\}$ for $t\in\Rgz$.
Then $\rho_4$ is non-decreasing.
To see that $\rho_4$ is a required function, 
let $R\geq 0$ and $x \in X \setminus \overline{B}_d(x_0,\rho_4(R))$.
Since $\overline{X} = \bigcup _{i=1}^{k_R} V_{m_R}[z_i^R]$,
there exists $i_0 \in \{ 1,\dots ,{k_R}\}$ such that $x \in V_{m_R}[z_{i_0}^R]$. Then $x_{i_0}^R \in \overline{B}_d(x_0,S_R)\subset \overline{B}_d(x_0,\rho_4(R))$ and 
$(x,x_{i_0}^R) \in V_{m_R} \circ V_{m_R} \subset V_{n_R}$, and hence $(x\mid x_{i_0}^R) >{n_R}$ or $d(x,x_{i_0}^R) <1/{n_R}$.
If $(x\mid x_{i_0}^R) >{n_R}$, then $(x\mid x_{i_0}^R) >R$ since $n_R>R$.
If $d(x,x_{i_0}^R) <1/{n_R}$,
then $d(x,x_{i_0}^R)  \leq 1$, which implies
\begin{align*}
\rho_3(R,1) \leq S_R <d(x_0,x) \leq \rho_3((x\mid x_{i_0}^R), d(x,x_{i_0}^R)) \leq \rho_3((x\mid x_{i_0}^R), 1),
\end{align*}
and hence $R< (x\mid x_{i_0}^R) $.
Therefore $(\cdot \mid \cdot)$ is satisfies (CP\ref{gp:tot-bdd}).
\end{proof}

\begin{defin}
\label{def:gromov-cptn}
For a proper metric space $X$ and a controlled product $(\cdot \mid \cdot )$, we call the space $\overline{X}$ the 
\emph{Gromov compactification} of $X$ with respect to $(\cdot \mid \cdot)$.
\end{defin}

\begin{ex}
\label{ex:gromov-prod-controlled-prod}
Let $(X,d)$ be a hyperbolic geodesic space and $x_0 \in X$.
Then the Gromov product $(\cdot \mid \cdot)_{x_0}$ in Observation \ref{ex:gromov-prod} is a controlled product.
By the way of construction, the Gromov boundary in the sense of Definition \ref{def:gromov-bdry} is nothing but the original Gromov boundary in \cite[1.8]{gromov:1987}.
\end{ex}
\begin{proof}
It suffices to show that the Gromov product $(\cdot \mid \cdot)_{x_0}$ satisfies (CP\ref{gp:tot-bdd}).
We show that the identity map $\id_{\Rgz} : \Rgz \to \Rgz$ is a required function as $\rho_4$.
Let $R\geq 0$ and $x \in X \setminus\overline{B}_d(x_0,R)$.
Take a geodesic segment $\gamma: [0,t_\gamma] \to X$ between $x_0$ and $x$.
Since $R< d(x_0,x)$, we have $R \in [0,t_\gamma]$.
Let $y =\gamma(R)$.
Then $y \in \overline{B}_d(x_0,R)$
and  
\begin{align*}
(x\mid y)_{x_0} & =\frac{d(x_0,x) +d(x_0,y) -d(x,y)}{2} \\
&=\frac{d(\gamma(0),\gamma(t_\gamma)) +d(\gamma(0),\gamma(R)) -d(\gamma(t_\gamma),\gamma(R))}{2} \\
&= \frac{t_\gamma +R - (t_\gamma-R)}{2} 
= R.
\end{align*}
Thus $(\cdot \mid \cdot)_{x_0}$ satisfies (CP\ref{gp:tot-bdd}), and hence it is a controlled product. 
\end{proof}

\begin{ex}
\label{ex:trivial-controlled-prod}
Let $(X,d)$ be an unbounded proper metric space and $(\cdot \mid \cdot)$ the pre-controlled product in Example \ref{ex:trivial-CP}. 
We see that $(\cdot \mid \cdot)$ satisfies (CP\ref{gp:tot-bdd}) by taking $\rho_4 : \Rgz \to \Rgz$ as 
\begin{align*}
\rho_4(t) = \inf \{d(x_0, y) : y \in X \setminus B_d (x_0,t) \} +1
\end{align*}
for $t \in \Rgz$.
Since $X$ is unbounded, $S_\infty (X) \ne \varnothing$.
By (\ref{fact:item:infty}) of Lemma \ref{fact:S-infty},  $(x_i) \sim (y_i)$ for every $(x_i), (y_i) \in S_\infty(X)$.
Thus $\overline{X}$ is the one-point compactification of $X$. 
\end{ex}

\begin{rem}
It follows from  (\ref{fact:item:infty}) of Lemma \ref{fact:S-infty} and  Theorem \ref{fact:item:proper->cpt} that for a proper metric space $X$ with a controlled product, $X$ is unbounded if and only if $\partial X \ne \varnothing$.
\end{rem}

Concerning metric subspaces, we have the following:

\begin{prop}
\label{prop:cp-subspace}
Let $(Y,d_Y)$ be a metric subspace of $(X,d)$ and $(\cdot \mid \cdot)^Y$ the restriction of $(\cdot \mid \cdot)$ to $Y\times Y$.
Then $(\cdot \mid \cdot)^Y$ is a pre-controlled product on $(Y,d_Y)$.
Moreover, if $(X,d)$ is proper and $(\cdot \mid \cdot)$ is a controlled product,
then $(\cdot\mid \cdot)^Y$ satisfies (CP\ref{gp:tot-bdd}), and hence it is a controlled product on $(Y,d_Y)$.
\end{prop}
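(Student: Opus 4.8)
The plan is to fix a base point $y_0 \in Y$ (the case $Y = \varnothing$ being vacuous) and to treat the two assertions separately, reserving the real work for the claim about (CP\ref{gp:tot-bdd}).

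For the first assertion I would avoid verifying (CP\ref{gp:npc})--(CP\ref{gp:higson}) by hand and instead reduce to Remark \ref{rem:base-indep}. By that remark $(\cdot\mid\cdot)$ is a pre-controlled product on $X$ \emph{with respect to} $y_0$, witnessed by $\rho_1$, $\rho_2'(t)=\rho_2(d(x_0,y_0)+t)$ and $\rho_3'(s,t)=\rho_3(s,t)+d(x_0,y_0)$. Each of the three defining inequalities is quantified over all points of $X$, hence holds in particular over $Y$; and since $d_Y = d\rest_{Y\times Y}$ and $(\cdot\mid\cdot)^Y=(\cdot\mid\cdot)\rest_{Y\times Y}$, the very same functions $\rho_1,\rho_2',\rho_3'$ witness that $(\cdot\mid\cdot)^Y$ is a pre-controlled product on $(Y,d_Y)$ with base point $y_0$.

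The substance lies in the second assertion, and this is where the main obstacle appears. One cannot simply restrict (CP\ref{gp:tot-bdd}): for $y\in Y$ far from $y_0$ the condition on $X$ yields a witness $y'$ with $(y\mid y')$ large, but $y'$ is only guaranteed to lie in $X$. Moreover a subspace of a proper space need not be proper, so by Theorem \ref{fact:item:proper->cpt} the space $\overline{Y}$ need not be compact, and one cannot run a compactness argument on $\overline{Y}$ itself. The idea is instead to use that $\overline{X}$ \emph{is} compact --- which holds by Theorem \ref{fact:item:proper->cpt}, as $X$ is proper and $(\cdot\mid\cdot)$ is a controlled product --- and to imitate the ``only if'' direction of that theorem while forcing every chosen witness to lie in $Y$. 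Concretely, writing $\rho_3^Y$ for the witness of (CP\ref{gp:higson}) for $(\cdot\mid\cdot)^Y$ obtained above, for each $R\ge 0$ I would pick $n_R\in\mathbb{N}$ with $n_R>R$ and $m_R\in\mathbb{N}$ with $V_{m_R}\circ V_{m_R}\subset V_{n_R}$, cover $\overline{X}=\bigcup_{i=1}^{k_R}V_{m_R}[z_i^R]$ by compactness, and --- this is the one genuine modification --- restrict to $I_R=\{i : V_{m_R}[z_i^R]\cap Y\neq\varnothing\}$, choosing $y_i^R\in V_{m_R}[z_i^R]\cap Y$ for $i\in I_R$. Then $S_R=\max(\{\rho_3^Y(R,1)\}\cup\{d_Y(y_0,y_i^R):i\in I_R\})$ and $\rho_4^Y(t)=\sup\{S_R:R\le t\}$ furnish a non-decreasing candidate, exactly as in Theorem \ref{fact:item:proper->cpt}.

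The verification is then the same computation, now with all points inside $Y$. Given $y\in Y$ with $d_Y(y_0,y)>\rho_4^Y(R)$, the cover provides $i_0$ with $y\in V_{m_R}[z_{i_0}^R]$; since $y\in Y$ this element meets $Y$, so $i_0\in I_R$ and $y_{i_0}^R$ is defined and lies in $\overline{B}_{d_Y}(y_0,S_R)\subset\overline{B}_{d_Y}(y_0,\rho_4^Y(R))$. As in the proof of Theorem \ref{fact:item:proper->cpt}, the relations $(y,z_{i_0}^R),(z_{i_0}^R,y_{i_0}^R)\in V_{m_R}$ give $(y,y_{i_0}^R)\in V_{n_R}$ (the case $z_{i_0}^R\in\partial X$ being covered by the extended inequality of Lemma \ref{fact:ext-gromov-prod}(\ref{fact:item:gp:npc})), so either $(y\mid y_{i_0}^R)^Y>n_R>R$, or $d_Y(y,y_{i_0}^R)<1/n_R\le 1$ and then (CP\ref{gp:higson}) for $(\cdot\mid\cdot)^Y$ gives $\rho_3^Y(R,1)<d_Y(y_0,y)\le\rho_3^Y((y\mid y_{i_0}^R)^Y,1)$, whence $R<(y\mid y_{i_0}^R)^Y$ by monotonicity of $\rho_3^Y$. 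In either case $y_{i_0}^R$ is the required witness, so $(\cdot\mid\cdot)^Y$ satisfies (CP\ref{gp:tot-bdd}) and is a controlled product on $(Y,d_Y)$.
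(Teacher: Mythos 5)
Your proof is correct, but it follows a genuinely different route from the paper's. The paper never invokes the compactness of $\overline{X}$: working directly in $X$, it applies (CP\ref{gp:tot-bdd}) for $(\cdot\mid\cdot)$ to a far-away point $y\in Y$ to obtain a witness $x$ in a fixed bounded ball of $X$, uses properness to cover by finitely many unit balls the set $C$ of points of that ball which witness (CP\ref{gp:tot-bdd}) for \emph{some} point of $Y$, attaches to each ball center $a$ a point $y_a\in Y$ with $(a\mid y_a)$ large, and then transfers largeness along the chain $y$--$x$--$a$--$y_a$ by combining (CP\ref{gp:proper}) and (CP\ref{gp:higson}) (to get $(x\mid a)$ large from $d(x,a)<1$) with two applications of (CP\ref{gp:npc}); this costs an extra layer of compositions $\rho_1\circ\rho_1$, $\rho_2\circ\rho_3$ in the constants but needs none of the boundary machinery. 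You instead route through Theorem \ref{fact:item:proper->cpt} to get compactness of $\overline{X}$ and rerun its ``only if'' argument with the cover representatives forced into $Y$, the key observation being that the cover element containing $y$ automatically meets $Y$ because $y$ itself lies in it. Both arguments use properness of $X$ in an essential and equivalent way (finite subcover of a bounded set versus compactness of $\overline{X}$), and you are right to flag that the composition $V_{m_R}\circ V_{m_R}\subset V_{n_R}$ must be read with intermediate points allowed in $\partial X$ --- a point the paper's own proof of Theorem \ref{fact:item:proper->cpt} glosses over in the same way. The only blemish you share with the paper is that $\rho_4^Y(t)=\sup\{S_R: R\le t\}$ is not obviously finite, since the choices entering $S_R$ are unconstrained as $R$ ranges over $[0,t]$; the standard repair (define $S_R$ only for $R\in\mathbb{N}$ and set $\rho_4^Y(t)=S_{\lceil t\rceil}$) applies to both.
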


\begin{proof}
The first assertion is obvious.
Assume that  $(X,d)$ is a proper and $(\cdot \mid \cdot)$ is a controlled product.
By Remark \ref{rem:base-indep}, we may assume that $x_0 \in Y$.
Let $\rho_i$, $i\in\{1,2,3,4\}$, be functions in (CP$i$) for $(\cdot\mid \cdot)$, respectively.
We may also assume that $t\leq \rho_i(t)$, $i\in \{1,2\}$, and $t\leq \rho_3(t,1)$ for $t\in\Rgz$.
For $R\geq 0$, let
\begin{align*}
Q_R&=\rho_2(\rho_3(\rho_1(\rho_1(R)),1))+1 \text{ and }
\\
C&= \{ x \in \overline{B}_d(x_0,\rho_4(Q_R)): \exists y \in Y\, ((x\mid y)\geq Q_R) \}.
\end{align*}
Since $C$ is bounded and $X$ is proper,
there exists a finite subset $A \subset C$ such that $C \subset \bigcup_{a \in A} B_d(a,1)$.
For each $a \in A$, fix $y_a \in Y$ satisfying $(a\mid y_a)\geq Q_R$ and set
\begin{align*}
S_R = \max\{ \rho_4(Q_R), d(x_0, y_a): a \in A \}.
\end{align*}
Define $\rho_4^Y : \Rgz \to \Rgz$ by letting 
$\rho_4^Y (t) = \sup \{S_R : R\leq t\}$ for $t\in \Rgz$.
Then $\rho_4^Y$ is non-decreasing.
To show that $\rho_4^Y$ is a required function,
let $R\geq 0$ and $y \in Y \setminus \overline{B}_{d_Y}(x_0,\rho_4^Y(R))$.
Since $\rho_4(Q_R)\leq \rho_4^Y(R)$,
there exists  $x \in \overline{B}_d(x_0, \rho_4(Q_R))$ such that 
$(y\mid x)\geq Q_R$.
Then $x \in C$ and hence we have $a \in A$ with $d(x,a) <1$.
Since
\begin{align*}
\rho_2(\rho_3(\rho_1(\rho_1(R)),1))
&<Q_R
\leq (x\mid y) 
\leq \rho_2(d(x_0,x))
\\&
\leq \rho_2( \rho_3((x\mid a), d(x,a)))
\leq \rho_2( \rho_3((x\mid a), 1)), 
\end{align*}
we have $\rho_1(\rho_1(R))<(x\mid a)$.
This and $\rho_1(\rho_1(R))<Q_R \leq \min \{ (y\mid x), (a\mid y_a)\}$ imply
\begin{align*}
\rho_1(\rho_1(R)) < \min \{ (y\mid x),(x\mid a), (a\mid y_a)\}
\leq \rho_1(\rho_1 ((y\mid y_a)) ),
\end{align*}
and hence $R< (y\mid y_a)=(y\mid y_a)^Y $.
We also have $d(x_0,y_a) \leq S_R \leq \rho_4^Y(R)$.
Thus $(\cdot \mid \cdot)^Y$ satisfies (CP\ref{gp:tot-bdd}).
\end{proof}

The following example shows that the assumption that $(X,d)$ is proper is essential for (CP\ref{gp:tot-bdd}) in Proposition \ref{prop:cp-subspace}.

\begin{ex}
\label{ex:non-cpt}
Let $X=\{(0,0)\} \cup \mathbb{N}^2$ and define a metric $d$ on $X$ by 
\begin{align*}
d(x_1,x_2) =
\begin{cases}
 |n_1-n_2|  & \text{ if } m_1=m_2,\\
 n_1 +  n_2 & \text{ if } m_1 \ne m_2.
\end{cases}
\end{align*}
for $x_1= (m_1,n_1),
x_2 = (m_2,n_2) \in X$.
Let $x_0=(0,0)$ and let $(\cdot \mid \cdot )$ be the Gromov product at $x_0$
(as in Observation \ref{ex:gromov-prod}).
Then we have 
\begin{align*}
(x_1\mid x_2)=
\begin{cases}
 \min \{n_1,n_2\}  & \text{ if } m_1=m_2,\\
0& \text{ if } m_1 \ne m_2
\end{cases}
\end{align*}
and $(X,d)$ is $0$-hyperbolic.
We also see that $(\cdot \mid \cdot)$  satisfies (CP\ref{gp:tot-bdd})
by taking the identity function $\id_{\Rgz} : \Rgz \to \Rgz$, and thus $(\cdot \mid \cdot)$ is a controlled product.
Note that $(X,d)$ is not proper.

Let $Y= \{x_0\} \cup \{ (m,2^m ) : m \in \mathbb{N} \}$
and $Z= \{x_0\} \cup \{ (m,2^m n ) : m,n \in \mathbb{N} \}$ be the metric subspaces of $(X,d)$. 
Then the induced metrics $d_Y$ and $d_Z$ are proper, while the induced pre-controlled products $(\cdot \mid \cdot)^Y$ and $(\cdot \mid \cdot)^Z$ do not satisfy (CP\ref{gp:tot-bdd}).
Note that  $S_\infty(Y)= \varnothing$ even though $Y$ is unbounded
(compare (\ref{fact:item:infty}) of Lemma \ref{fact:S-infty}), while  $\partial Z$ is homeomorphic to the discrete space $\mathbb{N}$.
\end{ex}

\section{Corasely equivalent controlled products}
\label{section:map-cprod}

\begin{defin}
For two functions $(\cdot \mid \cdot ), (\cdot \mid \cdot)': X \times X \to \Rgz$,
we write $(\cdot \mid \cdot ) \preceq (\cdot \mid \cdot)'$ if there exists a non-decreasing function $\rho_+ : \Rgz \to \Rgz$ such that $(x\mid  y) \leq \rho_+ ((x\mid  y)')$ for each $x, y \in X$.
Two functions $(\cdot \mid \cdot)$ and $(\cdot \mid \cdot)'$  are  said to be \emph{coarsely equivalent} if 
there exist non-decreasing functions $\rho_-, \rho_+ : \Rgz \to \Rgz$ such that $\lim_{t \to \infty} \rho_-(t) = \infty$ and $\rho_-((x\mid y)) \leq (x\mid y)' \leq \rho_+ ((x\mid  y))$ for each $x, y \in X$.
\end{defin}

\begin{rem}
\label{rem:f-preceq-g}
For  $(\cdot \mid \cdot ), (\cdot \mid \cdot)': X \times X \to \Rgz$,
the following conditions are equivalent (see \cite[Lemma 2.1]{guentner:2014}):
\begin{enumerate}[{\rm (a)}]
\item $(\cdot \mid \cdot ) \preceq (\cdot \mid \cdot)'$.
\item There exists a non-decreasing function $\rho_- : \Rgz \to \Rgz$ such that $\lim_{t \to \infty} \rho_-(t) = \infty$ and $\rho_-((x\mid y)) \leq (x\mid y)'$ for each $x, y \in X$.
\item For every $R\geq 0$ there exists $S_R \geq 0$ such that if $x,y \in X$ and $(x\mid y)'\leq R$, then $(x\mid y)\leq S_R$.
\end{enumerate}
In particular,  $(\cdot \mid \cdot)$ and $(\cdot \mid \cdot)'$  are coarsely equivalent if and only if $(\cdot \mid \cdot ) \preceq (\cdot \mid \cdot)'$ and $(\cdot \mid \cdot )' \preceq (\cdot \mid \cdot)$.
\end{rem}

\begin{rem}
\label{rem:cprod-ce}
Let $(\cdot\mid \cdot)$ be a controlled product on a metric space $X$ at $x_0\in X$ and $(\cdot\mid \cdot)':X\times X\to \Rgz$ a symmetric function which is coarsely equivalent to $(\cdot\mid \cdot)$. 
Then $(\cdot\mid \cdot)'$ is automatically a controlled product on $X$ at $x_0$.
Indeed, let $\rho_i$ be functions in (CP$i$) for $i \in \{1,2,3,4\}$ with respect to $(\cdot \mid \cdot)$. 
Since  $(\cdot\mid \cdot)$ and $(\cdot\mid \cdot)'$ are coarsely equivalent,
there exist non-decreasing functions 
$\rho_+, \rho_+' : \Rgz \to \Rgz$ satisfying 
\begin{align*}
(x\mid y)' \leq \rho_+ ((x\mid y)) \text{ and } 
(x\mid y) \leq \rho_+' ((x\mid y)')
\end{align*}
for every $x,y \in X$.
Define a non-decreasing functions $\rho_i'$, $i\in \{1,2,3,4\}$, by letting 
\begin{align*}
&\rho'_1(t)=\rho_+ (\rho_1(\rho_+'(t))), \quad  
\rho'_2(t)=\rho_+(\rho_2(t)), \quad 
\\
&\rho'_3(s,t)=\rho_3(\rho_+'(s),t) \text{ and }  
\rho'_4(t)=\rho_4(\rho_+'(t))
\end{align*}
for $s,t \in \Rgz$.
Then the functions satisfy (CP1)--(CP4) with respect to $(\cdot \mid \cdot)'$.
\end{rem}

\begin{prop}
\label{prop:order-cp-cptn}
Let $(X,d)$ be a proper metric space with controlled products $(\cdot \mid \cdot )$ and $(\cdot \mid \cdot)'$.
Let  $\overline{X}$ and $\overline{X}'$ be the Gromov compactifications
with respect to $(\cdot \mid \cdot)$ and $(\cdot \mid \cdot)'$, respectively.
Then  $(\cdot \mid \cdot ) \preceq (\cdot \mid \cdot)'$ if and only if there exists a continuous map $f : \overline{X} \to \overline{X}'$ such that $f\rest_X =\id_X$.
\end{prop}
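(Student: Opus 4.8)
The plan is to prove both implications by transferring between the controlled products and the metrizable uniformities generating the topologies of $\overline{X}$ and $\overline{X}'$, which are generated by the entourages $V_n$ and $V_n'$ of Definition \ref{defin:Vn}, respectively. Throughout I would use the reformulation (c) of $\preceq$ from Remark \ref{rem:f-preceq-g}: $(\cdot\mid\cdot)\preceq(\cdot\mid\cdot)'$ if and only if for every $R\geq 0$ there is $S_R\geq 0$ such that $(x\mid y)'\leq R$ implies $(x\mid y)\leq S_R$ for all $x,y\in X$. I would also use that, since $(X,d)$ is proper and both products are controlled, both $\overline{X}$ and $\overline{X}'$ are compact metric spaces by Theorem \ref{fact:item:proper->cpt}, hence complete, and that $X$ is dense in each by Lemma \ref{lem:Gromov-cptn}.

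For the forward direction, assume $(\cdot\mid\cdot)\preceq(\cdot\mid\cdot)'$. I would first check that $\id_X$, regarded as a map from $X$ with the subspace uniformity inherited from $\overline{X}$ into $\overline{X}'$, is uniformly continuous. Concretely, given $n'\in\mathbb{N}$, I would put $n=\max\{\lceil S_{n'}\rceil,n'\}$ and verify $V_n\cap(X\times X)\subset V_{n'}'$: if $x,y\in X$ with $(x\mid y)>n\geq S_{n'}$, then $(x\mid y)'>n'$ by the contrapositive of (c); and if $d(x,y)<1/n\leq 1/n'$, then $(x,y)\in V_{n'}'$ directly from the metric part of $V_{n'}'$. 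Since $X$ is dense in $\overline{X}$ and $\overline{X}'$ is a complete metric space, the uniformly continuous map $\id_X\colon X\to\overline{X}'$ extends uniquely to a uniformly continuous, hence continuous, map $f\colon\overline{X}\to\overline{X}'$, and this extension satisfies $f\rest_X=\id_X$ because it agrees with $\id_X$ on the dense set $X$. One checks, using part (\ref{fact:item:bdry-conv}) of Lemma \ref{fact:ext-gromov-prod} and continuity, that $f$ sends $[(x_i)]$ to the class of $(x_i)$ modulo the corresponding relation for $(\cdot\mid\cdot)'$, so $f$ is exactly the natural map between the boundaries.

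For the converse, assume a continuous $f\colon\overline{X}\to\overline{X}'$ with $f\rest_X=\id_X$ is given. Since $\overline{X}$ is compact, $f$ is uniformly continuous, so for each $n'\in\mathbb{N}$ there is $n\in\mathbb{N}$ with $(f\times f)(V_n)\subset V_{n'}'$; as $f\rest_X=\id_X$, this yields $V_n\cap(X\times X)\subset V_{n'}'$. I would then verify (c). Fix $R\geq 0$, choose $n'\in\mathbb{N}$ with $n'>R$, and take the corresponding $n$; let $\rho_2$ and $\rho_3'$ witness (CP\ref{gp:proper}) for $(\cdot\mid\cdot)$ and (CP\ref{gp:higson}) for $(\cdot\mid\cdot)'$, respectively, and set $S_R=\max\{n,\rho_2(\rho_3'(R,1))\}$. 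For $x,y\in X$ with $(x\mid y)'\leq R$, if $d(x,y)\geq 1/n'$ then $(x,y)\notin V_{n'}'$, whence $(x,y)\notin V_n$ and $(x\mid y)\leq n\leq S_R$; if instead $d(x,y)<1/n'\leq 1$, then (CP\ref{gp:higson}) for the primed product gives $d(x_0,x)\leq\rho_3'((x\mid y)',d(x,y))\leq\rho_3'(R,1)$, so (CP\ref{gp:proper}) for the unprimed product gives $(x\mid y)\leq\rho_2(d(x_0,x))\leq\rho_2(\rho_3'(R,1))\leq S_R$. Thus (c) holds and $(\cdot\mid\cdot)\preceq(\cdot\mid\cdot)'$.

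I expect the main obstacle to lie in the converse. Uniform continuity of $f$ only directly controls pairs that are separated in the metric $d$, because membership in $V_{n'}'$ permits the alternative $d(x,y)<1/n'$. The essential point is to rule out the remaining possibility that $x,y$ are $d$-close yet have a large unprimed product while $(x\mid y)'$ stays bounded; here I would exploit the interplay between the metric and the controlled-product axioms, using (CP\ref{gp:higson}) for the primed product to confine $x$ to a bounded region and then (CP\ref{gp:proper}) for the unprimed product to bound $(x\mid y)$. Making this estimate uniform in $R$ is the crux of the argument, whereas the forward direction is essentially a uniform-continuity-plus-extension argument.
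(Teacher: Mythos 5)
Your proof is correct. The ``if'' direction is essentially the paper's own argument: uniform continuity of $f$ via compactness of $\overline{X}$, the containment $V_n\cap(X\times X)\subset V_{n'}'$ from $f\rest_X=\id_X$, and then the same case split (small versus large $d(x,y)$) with the same composite bound $\rho_2\circ\rho_3'(\,\cdot\,,1)$ handling the $d$-close pairs --- the paper states this in the contrapositive form $(x\mid y)>S_R\Rightarrow(x\mid y)'>R$ and splits at $d(x,y)\le 1$ rather than $d(x,y)<1/n'$, but these are cosmetic differences. Where you genuinely diverge is the ``only if'' direction. The paper constructs $f$ explicitly, sending $x\in X$ to itself and $[(x_i)]_\sim$ to $[(x_i)]_{\sim'}$ (well-defined since $\preceq$ gives $S_\infty(X)\subset S_\infty'(X)$ and $\sim\,\subset\,\sim'$), and then verifies continuity at each boundary point by hand, via the rather heavy nested estimates with $\overline{\rho_1}$, $\overline{\rho_1}'$ and $\rho_+$ involving approximating sequences for both $x$ and $x'$. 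You instead check the single entourage inclusion $V_n\cap(X\times X)\subset V_{n'}'$ for $n=\max\{\lceil S_{n'}\rceil,n'\}$ --- which is a one-line consequence of condition (c) of Remark \ref{rem:f-preceq-g} plus the metric part of $V_{n'}'$ --- and then invoke the standard extension theorem for uniformly continuous maps from a dense subspace into a complete metric space, with completeness of $\overline{X}'$ supplied by Lemma \ref{lem:Gromov-cptn}(iii) (or by compactness via Theorem \ref{fact:item:proper->cpt}). Your key observation that one must use the subspace uniformity inherited from $\overline{X}$ (the entourages $V_n\cap(X\times X)$), not the $d$-uniformity, is exactly what makes the extension argument legitimate. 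The trade-off: your route is shorter and avoids all the $\overline{\rho_1}$-chain computations, at the price of being less explicit about what $f$ does on $\partial X$; your closing remark correctly recovers the explicit description ($f$ agrees with the paper's natural boundary map, by Lemma \ref{fact:ext-gromov-prod}(\ref{fact:item:bdry-conv}) and uniqueness of limits), though the statement as posed does not require it.
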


\begin{proof}
To show the ``only if'' part, suppose that  $(\cdot \mid \cdot ) \preceq (\cdot \mid \cdot)'$.
Let $S_\infty'(X)$ (resp.,  $\sim'$) be the set (resp., the equivalence relation) as in Definition \ref{def:Sinfty} with respect to $(\cdot\mid \cdot)'$.
Then, for any $x \in \partial X$ and $(x_i), (y_i) \in x$,
we have  $(x_i),(y_i) \in S_{\infty}' (X)$ 
and $(x_i) \sim' (y_i) $
since $(\cdot \mid \cdot) \preceq (\cdot\mid \cdot)'$.
Thus we may define a map $f : \overline{X} \to \overline{X}'$ by letting 
\begin{align*}
f (x) = \begin{cases}
x & \text{ if } x \in X,\\
[(x_i)]_{\sim'} & \text{ if } (x_i) \in x \in \partial X.
\end{cases} 
\end{align*} 

To show that $f$ is continuous, it suffices to show that 
it is continuous at every point in $\partial X$, that is, for any $x \in \partial X$ and any $n \in \mathbb{N}$ there exists $m \in \mathbb{N}$ such that $(f(x)\mid f(x'))' >n$ for each $x' \in \overline{X}$ with $(x\mid x')>m$, where $(\cdot \mid \cdot)$ and $(\cdot\mid \cdot)'$ are extended on $\overline{X}$ and $\overline{X}'$, respectively, as in \eqref{eq:ext-cp}.
Let $\overline{\rho_1}$ and $\overline{\rho_1}'$ be functions in (\ref{fact:item:gp:npc}) of Lemma \ref{fact:ext-gromov-prod} for $(\cdot \mid \cdot)$ and $(\cdot \mid \cdot)'$, respectively.
We may assume that $t\leq \overline{\rho_1}(t)$  and
$t\leq \overline{\rho_1}'(t)$ for every $t\in \Rgz$.
Since $(\cdot \mid \cdot) \preceq (\cdot\mid \cdot)'$,
there exists a non-decreasing function $\rho_+: \Rgz\to \Rgz$ satisfying
$(x\mid x') \leq \rho_+((x\mid x')')$ for every $x,x' \in X$.
Then for an arbitrary $x \in \partial X$ and $n \in \mathbb{N}$, let $m =\overline{\rho_1} (\overline{\rho_1}(\rho_+(\overline{\rho_1}'(\overline{\rho_1}'(n)))))$.
Let $x' \in \overline{X}$ with $(x\mid x')>m$, and we show that $(f(x)\mid f(x'))'>n$.

First, we suppose that $x' \in \partial X$. We may assume that $f(x)\ne f(x')$ (since $(f(x)\mid f(x))'=\infty$ by (\ref{fact:item:gp:infty}) of Lemma \ref{fact:ext-gromov-prod}). Take $(x_i) \in x$ and $(x_i')\in x'$.
By (\ref{fact:item:bdry-conv}) of Lemma \ref{fact:ext-gromov-prod}, there is $j \in \mathbb{N}$ satisfying 
\begin{align}
\label{eq:prop:order-cp-cptn:1}
m
&<\min\{(x_j \mid x), (x_j'\mid x')  \} 
\text{ and }\\
\label{eq:prop:order-cp-cptn:2}
\overline{\rho_1}'(\overline{\rho_1}'(({f}(x) \mid {f}(x'))'))
&< \min\{ ({f}(x)\mid x_j)' , (x_j'\mid {f}(x' ))'  \}.
\end{align}
By the inequality
\begin{align*}
&\min \{ ({f}(x)\mid x_j)', ( x_j\mid  x_j')', (x_j'\mid {f}(x' ))' \} 
\leq 
\overline{\rho_1}'(\overline{\rho_1}'(({f}(x) \mid {f}(x'))')),
\end{align*}
\eqref{eq:prop:order-cp-cptn:2} and the choice of $\rho_+$,
we have 
\begin{align*}
(x_j\mid x_j') \leq \rho_+ ( ( x_j\mid  x_j')' ) \leq \rho_+(\overline{\rho_1}'(\overline{\rho_1}'(({f}(x) \mid {f}(x'))'))),
\end{align*}
and hence 
\begin{align*}
&\overline{\rho_1} (\overline{\rho_1}(\rho_+(\overline{\rho_1}'(\overline{\rho_1}'(n)))))
=m\\
< &\min \{ (x_j \mid x) ,  (x\mid x'), (x_j'\mid x') \} 
\leq \overline{\rho_1}(\overline{\rho_1}((x_j \mid x_j')))
\\
\leq & \overline{\rho_1}(\overline{\rho_1}(\rho_+(\overline{\rho_1}'(\overline{\rho_1}'(({f}(x) \mid {f}(x'))'))))),
\end{align*}
which implies $n < ({f}(x) \mid {f}(x'))'$.

By a similar (but simpler) argument, we can show that  $n < ({f}(x) \mid {f}(x'))'$ when $x' \in X$.
Hence $f$ is continuous.

To show the ``if'' part, suppose that there is a continuous map
$f: \overline{X} \to \overline{X}'$  
 such that $f\rest_X = \id_X$.
For $n \in \mathbb{N}$, let $V_n$ and $V_n'$ be the sets defined in Definition \ref{defin:Vn} with respect to $(\cdot \mid \cdot )$ and $(\cdot \mid \cdot)'$, respectively.
To see $(\cdot\mid \cdot) \preceq (\cdot\mid \cdot)' $, we show that for every $R>0$ there exists $S_R>0$ such that 
if $x,y \in X$ and $(x\mid y)>S_R$, then $(x\mid y)'>R$ (see Remark \ref{rem:f-preceq-g}).
Indeed, let $R>0$ and choose $m_R \in \mathbb{N}$ with $m_R>R$.
Since $f$ is uniformly continuous, there exists $n_R \in \mathbb{N}$ such that 
if $x,y \in \overline{X}$ and $(x,y)\in V_{n_R}$, then $(f(x),f(y)) \in V_{m_R}'$.
Let $\rho_2$ be a function in (CP\ref{gp:proper}) with respect to $(\cdot\mid \cdot)$ and $\rho_3'$ a function in (CP\ref{gp:higson}) with respect to $(\cdot\mid \cdot)'$.
Put $S_R =\max \{n_R, \rho_2(\rho_3'(R,1))\}$.
To show that $S_R$ is as required, let $x,y \in X$ with  $(x\mid y)>S_R$.
If $d(x,y) \leq 1$, then we have $(x\mid y)' >R$ since
\begin{align*}
\rho_2 (\rho_3'(R,1)) 
&<(x\mid y)\leq \rho_2 (d(x_0,x))
\leq \rho_2(\rho_3'((x\mid y)',d(x,y)))
\\
&\leq \rho_2(\rho_3'((x\mid y)',1)).
\end{align*}
Suppose that $d(x,y) >1$.
Since $(x\mid y) >n_R$, we have $(x,y) \in V_{n_R}$.
This and $f\rest_X =\id_X$ imply $(x,y) \in V_{m_R}'$.
Since $(x,y) \in V_{m_R}'$ and $d(x,y) >1$, we have $(x\mid y)'>R$.
Thus $(\cdot\mid \cdot) \preceq (\cdot\mid \cdot)' $.
\end{proof}

By Proposition \ref{prop:order-cp-cptn} (and \cite[Theorem 3.5.4]{engelking:89}),
we have the following:

\begin{cor}
\label{cor:main-well-def}
Let $(X,d)$ be a proper metric space with controlled products $(\cdot \mid \cdot )$ and $(\cdot \mid \cdot)'$.
Let  $\overline{X}$ and $\overline{X}'$ be the Gromov compactifications
with respect to $(\cdot \mid \cdot)$ and $(\cdot \mid \cdot)'$, respectively.
Then  $(\cdot \mid \cdot)$ and $(\cdot \mid \cdot)'$  are coarsely equivalent if and only if $\overline{X}$ and $\overline{X}'$ are equivalent compactifications.
\end{cor}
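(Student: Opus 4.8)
The plan is to deduce the statement directly from Proposition \ref{prop:order-cp-cptn}, applied in both directions, together with the characterization of coarse equivalence recorded in Remark \ref{rem:f-preceq-g} and the standard fact that a continuous map into a Hausdorff space is determined by its restriction to a dense subset.

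First, for the ``only if'' direction I would assume that $(\cdot \mid \cdot)$ and $(\cdot \mid \cdot)'$ are coarsely equivalent. By Remark \ref{rem:f-preceq-g} this is equivalent to $(\cdot \mid \cdot) \preceq (\cdot \mid \cdot)'$ and $(\cdot \mid \cdot)' \preceq (\cdot \mid \cdot)$ holding simultaneously. Proposition \ref{prop:order-cp-cptn} then furnishes continuous maps $f \colon \overline{X} \to \overline{X}'$ and $g \colon \overline{X}' \to \overline{X}$ with $f\rest_X = \id_X$ and $g\rest_X = \id_X$. The composite $g \circ f \colon \overline{X} \to \overline{X}$ is continuous and agrees with $\id_{\overline{X}}$ on the dense subset $X$ of the (metrizable, hence Hausdorff) space $\overline{X}$, so $g \circ f = \id_{\overline{X}}$; symmetrically $f \circ g = \id_{\overline{X}'}$. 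Thus $f$ is a homeomorphism with $f\rest_X = \id_X$, and $\overline{X}$ and $\overline{X}'$ are equivalent compactifications.

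For the ``if'' direction I would start from a homeomorphism $f \colon \overline{X} \to \overline{X}'$ with $f\rest_X = \id_X$. Since $f$ is continuous, Proposition \ref{prop:order-cp-cptn} yields $(\cdot \mid \cdot) \preceq (\cdot \mid \cdot)'$. The inverse $f^{-1} \colon \overline{X}' \to \overline{X}$ is also continuous, and from $f(X) = X$ together with $f\rest_X = \id_X$ one gets $f^{-1}\rest_X = \id_X$; applying Proposition \ref{prop:order-cp-cptn} again gives $(\cdot \mid \cdot)' \preceq (\cdot \mid \cdot)$. By Remark \ref{rem:f-preceq-g}, the two controlled products are coarsely equivalent.

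The argument is essentially formal once Proposition \ref{prop:order-cp-cptn} is in hand, so I do not anticipate a genuine obstacle. The only point deserving care is the uniqueness step $g \circ f = \id_{\overline{X}}$ (and its symmetric counterpart): this relies on the density of $X$ in $\overline{X}$ established in Lemma \ref{lem:Gromov-cptn} and on $\overline{X}$ being Hausdorff, which is precisely the extension-uniqueness principle invoked through \cite[Theorem 3.5.4]{engelking:89}.
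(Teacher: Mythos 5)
Your argument is correct and follows the same route as the paper: the corollary is obtained by applying Proposition \ref{prop:order-cp-cptn} in both directions, combining it with the two-sided characterization of coarse equivalence in Remark \ref{rem:f-preceq-g}, and concluding via the standard uniqueness principle for compactifications (the paper cites \cite[Theorem 3.5.4]{engelking:89} for exactly the density/Hausdorff step you spell out).
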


\section{Gromov compactifications and coarse compactifications}
\label{section:gromov-cptn-coarse-cptn}

Throughout this section, let $(X,d)$ be a proper metric space,
$(\cdot \mid \cdot)$ a controlled product on $(X,d)$,
$\overline{X}$ the Gromov compactification  with respect to $(\cdot \mid \cdot)$ and $\partial X$ its Gromov boundary.
For definitions related to coarse compactifications, see Definition \ref{def:coarse-cptn}.

\begin{defin}
A bounded continuous function $f: X \to \mathbb{C}$ is called a \emph{Gromov function}
if, for every $\varepsilon>0$, there exists $Q>0$ such that $|f(x)-f(y)|<\varepsilon$ for every $x,y \in X$ with $(x\mid y) >Q$.
\end{defin}

\begin{prop}
\label{fact:Gromov-fcn-Higson-fcn}
Every Gromov function  is a Higson function.
\end{prop}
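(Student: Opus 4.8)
The plan is to show directly that an arbitrary Gromov function $f : X \to \mathbb{C}$ satisfies the defining condition of a Higson function from Definition \ref{def:coarse-cptn}. So I fix $\varepsilon > 0$ and $R > 0$, and I must produce a bounded set $B$ such that $|f(x) - f(y)| < \varepsilon$ whenever $x, y \in X \setminus B$ and $d(x,y) < R$. Since $f$ is Gromov, I first obtain from the definition a threshold $Q > 0$ such that $(x \mid y) > Q$ already forces $|f(x) - f(y)| < \varepsilon$. The whole task then reduces to arranging that, for points $x, y$ outside a suitable bounded set and with $d(x,y) < R$, the controlled product $(x \mid y)$ is automatically large, i.e.\ exceeds $Q$.

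The key tool for this is axiom (CP\ref{gp:higson}): there is a non-decreasing $\rho_3$ with $d(x_0, x) \le \rho_3((x \mid y), d(x,y))$ for all $x, y$. I would read this contrapositively. Suppose $x, y \in X$ with $d(x,y) < R$ but $(x \mid y) \le Q$. Then by monotonicity of $\rho_3$ in both variables,
\begin{align*}
d(x_0, x) \le \rho_3((x \mid y), d(x,y)) \le \rho_3(Q, R).
\end{align*}
Thus any such pair forces $x$ to lie in the closed ball $\overline{B}_d(x_0, \rho_3(Q,R))$. This is the heart of the argument: largeness of $d(x_0,x)$ together with smallness of $d(x,y)$ cannot coexist with smallness of $(x\mid y)$.

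So I would set $B = \overline{B}_d(x_0, \rho_3(Q, R))$, which is bounded. Now take $x, y \in X \setminus B$ with $d(x,y) < R$. Since $x \notin B$, we have $d(x_0, x) > \rho_3(Q, R)$, and the displayed inequality above shows this is incompatible with $(x \mid y) \le Q$; hence $(x \mid y) > Q$. By the choice of $Q$, this yields $|f(x) - f(y)| < \varepsilon$, as required. Since $f$ is bounded and continuous by hypothesis, and $\varepsilon, R$ were arbitrary, $f$ is a Higson function.

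I do not expect a serious obstacle here; the proof is essentially a clean application of (CP\ref{gp:higson}). The one point requiring mild care is the direction of monotonicity: I must use that $\rho_3$ is non-decreasing in \emph{each} variable separately (as spelled out in Definition \ref{def:gromov-prod}) to pass from $(x \mid y) \le Q$ and $d(x,y) < R$ to the bound $\rho_3(Q, R)$, replacing the strict inequality $d(x,y) < R$ by $d(x,y) \le R$ to stay within the non-decreasing hypothesis. Beyond this bookkeeping, the argument is a short, direct reduction.
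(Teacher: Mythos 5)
Your proof is correct and is essentially identical to the paper's: both extract the threshold $Q$ from the Gromov condition, take $B=\overline{B}_d(x_0,\rho_3(Q,R))$, and use the monotonicity of $\rho_3$ in (CP3) to force $(x\mid y)>Q$ for points outside $B$ at distance less than $R$. The only cosmetic difference is that you phrase the key step contrapositively while the paper writes the chain $\rho_3(Q,R)<d(x_0,x)\leq\rho_3((x\mid y),R)$ directly; the content is the same.
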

\begin{proof}
Let $f : X \to \mathbb{C}$ be a Gromov function.
To show that $f$ is a Higson function, let $\varepsilon>0$ and $R>0$.
Since $f$ is a Gromov function, there exists $Q>0$ such that $|f(x)-f(y)|<\varepsilon$ for every $x,y \in X$ with $(x\mid y) >Q$.
Let $\rho_3$ be a function in (CP\ref{gp:higson}).
Then, for every $x ,y \in X \setminus \overline{B}_d(x_0,\rho_3(Q,R))$ with $d(x,y) < R$,
we have 
\begin{align*}
\rho_3(Q,R) < d(x_0,x) \leq \rho_3((x\mid y), d(x,y)) \leq \rho_3 ( (x\mid y), R),
\end{align*} 
which implies $Q < (x\mid y)$, and hence $|f(x) -f(y) |<\varepsilon$.
Therefore $f$ is a Higson function.
\end{proof}

Let $C(\overline{X})$ denote the set of all continuous functions on $\overline{X}$ to $\mathbb{C}$ and $C_g(X)$ 
the set of all Gromov functions on $X$ to $\mathbb{C}$.

\begin{prop}
For any function $f : X \to \mathbb{C}$,
$f \in C_g(X)$ if and only if there exists $\overline{f} \in C(\overline{X})$ such that $\overline{f}\rest_X =f$.
\end{prop}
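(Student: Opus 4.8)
The plan is to prove the two implications separately, the ``if'' direction being essentially immediate and the ``only if'' direction requiring the bulk of the work. For the ``if'' part, suppose $f : X \to \mathbb{C}$ extends to some $\overline{f} \in C(\overline{X})$. Since $\overline{X}$ is compact and $\overline{f}$ is continuous, $\overline{f}$ is uniformly continuous with respect to $\d$, and $f = \overline{f}\rest_X$ is bounded because $\overline{X}$ is compact. To verify the Gromov condition, fix $\varepsilon > 0$; uniform continuity gives $\delta > 0$ with $|\overline{f}(x) - \overline{f}(y)| < \varepsilon$ whenever $\d(x,y) < \delta$, and by the defining property of $\d$ there is $Q \in \mathbb{N}$ with $\{(x,y) : \d(x,y) < \delta\} \supset V_Q$ in the relevant sense, so $(x\mid y) > Q$ forces $(x,y) \in V_Q$ and hence $|f(x) - f(y)| < \varepsilon$. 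Thus $f \in C_g(X)$.

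For the ``only if'' direction, the strategy is to build the continuous extension $\overline{f}$ by defining its value on a boundary point as a limit along any representing sequence, and then prove this is well-defined and continuous. First I would check that $f$ is uniformly continuous with respect to $\d$: given $\varepsilon > 0$, the Gromov condition yields $Q > 0$ so that $(x\mid y) > Q$ implies $|f(x) - f(y)| < \varepsilon$, and for the metric part one uses continuity together with the fact that $f$ is bounded, or more precisely one argues on sets bounded away from $x_0$ using (CP\ref{gp:higson}) as in Proposition \ref{fact:Gromov-fcn-Higson-fcn}. The cleanest route is to observe that $f$, being $\d$-uniformly continuous on the dense subspace $X$ of $\overline{X}$, extends uniquely to a $\d$-uniformly continuous (hence continuous) function $\overline{f}$ on the completion; since $(\overline{X}, \d)$ is complete by Lemma \ref{lem:Gromov-cptn}(iii) and $X$ is dense by Lemma \ref{lem:Gromov-cptn}(ii), this extension lands in $\overline{X}$ itself.

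The main obstacle is establishing that $f$ is genuinely $\d$-uniformly continuous, because the metric $\d$ mixes the two sources of proximity recorded in $V_n$ (large Gromov product and small $d$-distance), and the Gromov condition only controls the first. I would handle this by splitting cases according to which clause of the definition of $V_n$ is witnessed. Concretely, given $\varepsilon > 0$, choose $Q$ from the Gromov condition, and then for pairs with $d(x,y)$ small I would use the local continuity of $f$ together with properness to cover a large compact ball $\overline{B}_d(x_0, \rho_3(Q,1))$ by finitely many sets on which $f$ oscillates by less than $\varepsilon$; outside this ball, small $d$-distance already forces large Gromov product via (CP\ref{gp:higson}), reducing to the first clause. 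Assembling these estimates produces an $n \in \mathbb{N}$ such that $(x,y) \in V_n$ implies $|f(x) - f(y)| < \varepsilon$, which is exactly $\d$-uniform continuity. Once uniform continuity is in hand, the extension and its continuity follow formally, and the restriction $\overline{f}\rest_X = f$ holds by construction, completing the proof.
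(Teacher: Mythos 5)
Your proposal is correct, but in the ``only if'' direction it takes a genuinely different route from the paper. The paper defines $\overline{f}$ on $\partial X$ directly as $\lim_i f(x_i)$ along a representing sequence $(x_i)\in x$ (convergence and independence of the representative follow from the Gromov condition together with $(x_i\mid x_j)\to\infty$ and $(x_i\mid y_i)\to\infty$), and then checks continuity only at points of $\partial X$ via an $\varepsilon/3$-argument; since $X$ is open in $\overline{X}$ and $f$ is continuous there, and since $(x,y)\in V_n$ with $x\in\partial X$ automatically falls into the Gromov-product clause $(x\mid y)>n$, the paper never has to control $f$ on pairs that are merely $d$-close. You instead prove that $f$ is $\d$-uniformly continuous on all of $X$ and invoke the standard extension theorem for uniformly continuous maps on dense subsets (here only completeness of $\mathbb{C}$ is needed, not of $\overline{X}$). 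This forces you to handle the clause $d(x,y)<1/n$ of $V_n$, which the paper's argument sidesteps; your case split does work, and in fact more cleanly than you suggest: if at least one of $x,y$ lies outside $\overline{B}_d(x_0,\rho_3(Q,1))$ and $d(x,y)\leq 1$, then (CP3) already yields $(x\mid y)>Q$, so the compactness/uniform-continuity argument is only needed when both points lie in that closed ball, and there is no boundary-straddling case to worry about. The trade-off is that your approach buys a formally cleaner conclusion (well-definedness and continuity come for free from the extension theorem) at the cost of an extra uniform-continuity lemma that leans on properness, whereas the paper's hands-on construction uses only the Gromov condition and the structure of the entourages $V_n$ at boundary points. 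Your ``if'' direction coincides with the paper's.
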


\begin{proof}
The ``if'' part follows from the fact that every $f \in C(\overline{X})$ is uniformly continuous and a straightforward argument.
To show the ``only if'' part, let $f\in C_g(X)$.
It is easy to see that, for every $x\in \overline{X}$ and $(x_i) \in x$,  the sequence $(f(x_i))$  is convergent in $\mathbb{C}$ and that the limit $\lim_{i\to \infty} f(x_i)$ does not depend on the choice of $(x_i) \in x$.
Thus we may define $\overline{f} : \overline{X} \to \mathbb{C}$ by letting
$\overline{f} (x) =\lim_{i\to \infty} f(x_i)$, where $(x_i) \in x$, for $x\in \overline{X}$.
It is clear that $\overline{f}\rest_X =f$.
It remains to show that $\overline{f}$ is continuous.
Since $X$ is open in $\overline{X}$ and $\overline{f}\rest_X (=f)$ is continuous, it suffices to show that $\overline{f}$ is continuous at $x \in \partial X$.
Let $x \in \partial X$ and $\varepsilon>0$.
Since $f$ is a Gromov function, there exists $n \in \mathbb{N}$ such that 
$|f(z)-f(z')|<\varepsilon/3$ for every $z,z' \in X$ with $(z\mid z')>n$. 
Let $y \in \overline{X}$ with $(x,y) \in V_n$.
Since $x \in \partial X$, we have $(x\mid y) >n$.
Let $(x_i) \in x$ and $(y_i) \in y$.
Taking  $j \in \mathbb{N}$ satisfying 
$(x_j \mid y_j) >n$, 
$|\overline{f}(x) -f(x_j) |<\varepsilon/3$ and 
$|\overline{f}(y) -f(y_j) |<\varepsilon/3$,
we have 
$|\overline{f}(x) -\overline{f}(y) |
<\varepsilon$, and thus $\overline{f}$ is continuous.
\end{proof}

\begin{rem}
\label{rem:Gromov-fcn-sp-iso}
The sets 
$C(\overline{X})$ and $C_g(X)$ with canonical operations and the sup-norms defined by
$$\|f \| = \sup \{ |f(x)| : x \in \overline{X}\}\text{ and }\|g \| = \sup \{ |g(x)| : x \in X\}
$$
for $f \in C(\overline{X})$ and $g \in C_g(X) $
form $C^*$-algebras.
The map $\varphi : C(\overline{X}) \to C_g(X)$ defined by $\varphi(f) =f\rest_X$ for every $f \in C(\overline{X})$ is an isomorphism.
\end{rem}

\begin{prop}
The Gromov compactification $\overline{X}$ 
is a coarse compactification of $X$ and the Gromov boundary $\partial X$ is a corona of $X$.
\end{prop}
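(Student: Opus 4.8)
The plan is to verify the three requirements in Definition \ref{def:coarse-cptn}: that $\overline{X}$ is a metrizable compactification of $X$, and that there is a continuous map $f\colon hX\to \overline{X}$ with $f\rest_X=\id_X$; the final assertion (that $\partial X$ is a corona) is then immediate, since $\partial X=\overline{X}\setminus X$. The first requirement is essentially already in hand: $\overline{X}$ is compact by Theorem \ref{fact:item:proper->cpt} (as $X$ is proper and $(\cdot\mid\cdot)$, being a controlled product, satisfies (CP\ref{gp:tot-bdd})), it is metrizable by construction via the metric $\d$, and $X$ is a dense open subspace by Lemma \ref{lem:Gromov-cptn}. So the whole content lies in producing the map $f$.

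First I would pin down the two relevant subalgebras of the bounded continuous functions on $X$. By Remark \ref{rem:Gromov-fcn-sp-iso}, restriction to $X$ is an isomorphism $\varphi\colon C(\overline{X})\to C_g(X)$, so $C_g(X)$ is exactly the algebra of functions on $X$ that extend continuously over $\overline{X}$; likewise, by the definition of the Higson compactification, $C_h(X)$ is exactly the algebra of functions extending continuously over $hX$, that is, $C(hX)\rest_X=C_h(X)$. The key inclusion $C_g(X)\subseteq C_h(X)$ is supplied by Proposition \ref{fact:Gromov-fcn-Higson-fcn}.

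Next I would obtain $f$ by Gelfand duality. The inclusion $C_g(X)\hookrightarrow C_h(X)$, composed with the isomorphisms $C(\overline{X})\cong C_g(X)$ and $C_h(X)\cong C(hX)$, is an injective unital $*$-homomorphism $\Phi\colon C(\overline{X})\to C(hX)$, whose Gelfand dual is a continuous (automatically surjective) map $f\colon hX\to \overline{X}$. It then remains to check $f\rest_X=\id_X$, and here the point is that every map in sight is concretely given by restricting functions already defined on $X$: for $x\in X$, the character ``evaluation at $x$'' on $C(hX)$ corresponds to the point $x\in X\subseteq hX$, and pulling it back along $\Phi$ yields evaluation at $x\in X\subseteq \overline{X}$, so $f(x)=x$. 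Alternatively, one may bypass $C^*$-algebras entirely and invoke the order theory of compactifications of the Tychonoff space $X$ (cf. \cite[Theorem 3.5.4]{engelking:89}), under which the inclusion $C_g(X)\subseteq C_h(X)$ immediately gives $hX\ge\overline{X}$, i.e. a continuous $f\colon hX\to\overline{X}$ fixing $X$.

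I expect the only delicate point to be the bookkeeping of these identifications, so that the dual map genuinely fixes $X$ pointwise rather than merely mapping $hX$ onto $\overline{X}$. All the substantive work has already been done in Proposition \ref{fact:Gromov-fcn-Higson-fcn}: it is precisely the fact that every Gromov function is a Higson function that makes the inclusion $C_g(X)\subseteq C_h(X)$, and hence the map $f$, exist, and therefore exhibits $\overline{X}$ as a coarse compactification and $\partial X$ as a corona of $X$.
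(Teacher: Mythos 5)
Your proposal is correct and follows essentially the same route as the paper: the whole content is the inclusion $C_g(X)\subseteq C_h(X)$ from Proposition \ref{fact:Gromov-fcn-Higson-fcn}, combined with the isomorphisms $C(\overline{X})\cong C_g(X)$ and $C_h(X)\cong C(hX)$, after which the standard correspondence between function algebras and compactifications (the paper cites Chandler, you cite Gelfand duality or Engelking) produces the continuous map $f\colon hX\to\overline{X}$ fixing $X$. Your additional care in checking that $\overline{X}$ is a metrizable compactification and that the dual map genuinely restricts to the identity on $X$ is sound bookkeeping that the paper leaves implicit.
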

\begin{proof}
By Proposition \ref{fact:Gromov-fcn-Higson-fcn} and Remark \ref{rem:Gromov-fcn-sp-iso}, $C(\overline{X}) \cong C_g(X) \subset C_h(X)\cong C(hX)$,
which implies that there exists a continuous map $f: hX \to \overline{X}$ such that $f\rest_X =\id_X$  (see, for example, \cite[Theorem 2.10]{chandler:1976}).
Thus $\overline{X}$  is a coarse compactification and the Gromov boundary $\partial X$ 
is a corona of $X$.
\end{proof}

Conversely, we have the following:

\begin{thm}
\label{prop:coarse-cptn-Gromov-cptn}
For every coarse compactification $cX$ of a proper metric space $(X,d)$,
there exists a controlled product $(\cdot \mid \cdot)^c$ on $X$ such that $cX$ and the Gromov compactification $\overline{X}^c$ with respect to  $(\cdot \mid \cdot)^c$ are equivalent.
In particular, every corona of $X$ is homeomorphic to the Gromov boundary with respect to some  controlled product on $X$.
\end{thm}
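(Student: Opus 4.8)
The plan is to realize $cX$ as a Gromov compactification by cooking up a controlled product directly from a metric on $cX$. Fix a (necessarily bounded) metric $\rho$ on the compact metrizable space $cX$, and recall that, since $X$ is proper, it is locally compact and hence \emph{open} in $cX$ with its original $d$-topology as subspace topology. With the strictly decreasing $\psi:(0,\infty)\to(0,\infty)$, $\psi(u)=1/u$, $\psi(0)=\infty$, I would set
\[
(x\mid y)^c=\min\bigl\{d(x_0,x),\,d(x_0,y),\,\psi(\rho(x,y))\bigr\}\qquad(x,y\in X).
\]
This is symmetric and $\Rgz$-valued (finite for $x\neq y$, and $(x\mid x)^c=d(x_0,x)$). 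The three terms are chosen so that the first two force $d(x_0,x_i)\to\infty$ along any sequence on which the product blows up, while $\psi(\rho(x,y))$ blows up exactly when $x,y$ become $\rho$-close, i.e. approach a common point of $cX$. Rather than checking \emph{all four} axioms by hand, I would verify only the pre-controlled axioms (CP1)--(CP3), build $\overline{X}^c$, prove $\overline{X}^c\cong cX$ as compactifications, and then read off (CP4) \emph{for free} from the ``only if'' half of Theorem \ref{fact:item:proper->cpt}, since $X$ is proper and $\overline{X}^c$ will be compact.

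For (CP1), writing $a=d(x_0,x)$, $c=d(x_0,z)$, one has $\min\{(x\mid y)^c,(y\mid z)^c\}\le\min\{\psi(\rho(x,y)),\psi(\rho(y,z))\}=\psi(\max\{\rho(x,y),\rho(y,z)\})$, and $\rho(x,z)\le 2\max\{\rho(x,y),\rho(y,z)\}$ makes this $\le 2\psi(\rho(x,z))$; combined with the bound by $\min\{a,c\}$ this gives (CP1) with $\rho_1(s)=2s$. Axiom (CP2) holds with $\rho_2=\id$ since $(x\mid y)^c\le d(x_0,x)$. The essential step, and the \textbf{only place coarseness is used}, is (CP3). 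Here I would invoke the reformulation of ``$cX$ is coarse'' as a \emph{control condition}: for all $R',\varepsilon'>0$ there is $b(R',\varepsilon')\ge 0$ such that $d(x,y)\le R'$ and $d(x_0,x)>b(R',\varepsilon')$ imply $\rho(x,y)<\varepsilon'$ (this equivalence comes from uniform continuity of the functions in $C(cX)$ on one side and separation of points of $hX$ by Higson functions on the other). Given $x,y$, the only nontrivial case is $(x\mid y)^c=\psi(\rho(x,y))=:s$, i.e. $\rho(x,y)=1/s$; applying the control condition with $R'=d(x,y)$ and $\varepsilon'=1/s$ forces $d(x_0,x)\le b(d(x,y),1/s)$, which (after monotonizing $b$) is non-decreasing in both $s$ and $d(x,y)$. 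Hence $\rho_3(s,t)=\max\{s+t,\,b(t,1/s)\}$ witnesses (CP3), and $(\cdot\mid\cdot)^c$ is a pre-controlled product. A non-coarse compactification (e.g. the square compactification of $\mathbb{R}^2$) fails precisely here: without control, $d(x_0,x)$ cannot be bounded by a function of $(x\mid y)^c$ and $d(x,y)$.

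With (CP1)--(CP3) in hand, $\partial X$, $\overline{X}^c$ and $\d$ are defined, and $\overline{X}^c$ is complete by Lemma \ref{lem:Gromov-cptn} since $X$ is proper. I would then identify the boundary: a sequence lies in $S_\infty(X)$ iff $d(x_0,x_i)\to\infty$ and $\rho(x_i,x_j)\to0$, i.e. iff it is $\rho$-Cauchy and leaves every bounded set, which by compactness of $cX$ and openness of $X$ means it converges to a point of $cX\setminus X$; moreover $(x_i)\sim(y_i)$ iff $\rho(x_i,y_i)\to0$ iff they share a limit. Thus $(x_i)\mapsto\lim_{cX}x_i$ descends to a bijection $\partial X\to cX\setminus X$, which together with $\id_X$ gives a bijection $\Phi:\overline{X}^c\to cX$ with $\Phi\rest_X=\id_X$. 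To conclude I would show $\Phi^{-1}:cX\to\overline{X}^c$ is continuous: on the open set $X$ the two topologies coincide, and at a corona point $\omega$, for $\omega_n\to\omega$ in $cX$, the extended product of Lemma \ref{fact:ext-gromov-prod} satisfies $(\Phi^{-1}(\omega_n)\mid\Phi^{-1}(\omega))^c=\min\{d(x_0,\omega_n),\psi(\rho(\omega_n,\omega))\}\to\infty$ (since $d(x_0,\omega_n)\to\infty$ and $\rho(\omega_n,\omega)\to0$), so $\Phi^{-1}(\omega_n)\to\Phi^{-1}(\omega)$. As $cX$ is compact and $\overline{X}^c$ is metrizable (hence Hausdorff), this continuous bijection is a homeomorphism; in particular $\overline{X}^c$ is compact. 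Theorem \ref{fact:item:proper->cpt} then delivers (CP4) gratis, so $(\cdot\mid\cdot)^c$ is a controlled product, $\overline{X}^c$ is its Gromov compactification, and $\Phi$ exhibits $\overline{X}^c$ and $cX$ as equivalent compactifications; restricting $\Phi$ to boundaries gives the ``in particular'' statement.

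I expect the main obstacle to be (CP3): isolating the exact control statement equivalent to coarseness and using it to bound $d(x_0,x)$ in terms of $(x\mid y)^c$ and $d(x,y)$. The second most delicate point is the continuity of $\Phi^{-1}$ at corona points, in particular the bookkeeping when $\omega_n$ itself lies in the corona, which I would handle via representative sequences and the properties of the extended product in Lemma \ref{fact:ext-gromov-prod}; everything else is formal.
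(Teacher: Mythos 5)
Your construction is essentially the paper's own: the authors likewise fix a metric $d_c$ on $cX$ and set $(x\mid y)^c=\min\{d(x_0,x),d(x_0,y),n(x,y)\}$, where $n(x,y)=\max\{n : d_c(x,y)\le 2^{-n}\diam(cX)\}$ plays exactly the role of your $\psi(\rho(x,y))=1/\rho(x,y)$, and the identification of $\partial^cX$ with $cX\setminus X$ via limits in $cX$ runs just as you describe. Two genuine differences are worth recording. First, for (CP\ref{gp:higson}) the paper does not isolate your ``control condition'' but instead invokes \cite[Proposition 2.3]{dranishnikov-keesling-uspenkij:98} through Lemma \ref{lem:DKU-disjoint-diverge} (disjoint closed $E,F\subset cX$ have $N_d(E\cap X,R)\cap N_d(F\cap X,R)$ bounded) together with a finite $\delta/4$-cover of $cX$; your condition is equivalent, and your sketched derivation (Urysohn in $cX$ plus the fact that every $f\in C(cX)$ restricts to a Higson function since $cX$ is coarse) is essentially the proof of that cited proposition --- since this is the one place coarseness enters, you should either cite it or write that compactness-and-Urysohn argument out in full. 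Second, the paper verifies (CP\ref{gp:tot-bdd}) directly by a finite $V_{m_R}$-cover of $cX$ and density of $X$, whereas you deduce it gratis from the ``only if'' half of Theorem \ref{fact:item:proper->cpt} once $\overline{X}^c$ is compact; this is legitimate (all of Section \ref{section:gromov-cptn,bdry} only assumes a pre-controlled product) and saves work, at the price that you must prove continuity of $\Phi^{-1}$ rather than of $\Phi$, since you cannot yet quote compactness of $\overline{X}^c$. The corona-to-corona bookkeeping you flag is real but routine: approximate each $\omega_n\in cX\setminus X$ by $z_n\in X$ with $\rho(z_n,\omega_n)<1/n$ and $(z_n\mid\Phi^{-1}(\omega_n))^c>n$, then apply the already-settled case for points of $X$ together with (\ref{fact:item:gp:npc}) of Lemma \ref{fact:ext-gromov-prod}.
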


To prove Theorem \ref{prop:coarse-cptn-Gromov-cptn},
we will apply the next lemma,
which follows from \cite[Proposition 2.3]{dranishnikov-keesling-uspenkij:98}.

\begin{lem}
\label{lem:DKU-disjoint-diverge}
Let  $E$ and $F$ be disjoint closed subsets  of a coarse compactification $cX$ of a proper metric space $(X,d)$.
Then for any $R>0$ the intersection
$N_d(E\cap X,R) \cap N_d(F\cap X,R) $ is bounded in $X$.
\end{lem}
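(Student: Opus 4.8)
The plan is to manufacture, from the disjointness of $E$ and $F$ in the compactification, a Higson function on $X$ that is pinned to the value $0$ near $E$ and $1$ near $F$, and then to invoke the defining property of Higson functions to forbid the two $R$-neighborhoods from meeting arbitrarily far out in $X$.

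First I would separate $E$ and $F$ inside $cX$. Since $cX$ is a compactification, it is compact Hausdorff and hence normal, so Urysohn's lemma applied to the disjoint closed sets $E$ and $F$ yields a continuous function $g : cX \to [0,1]$ with $g\rest_{E} \equiv 0$ and $g\rest_{F} \equiv 1$. Let $f : hX \to cX$ be the continuous map from the Higson compactification with $f\rest_X = \id_X$ furnished by Definition \ref{def:coarse-cptn}. Then $g \circ f \in C(hX)$, and since $C(hX)$ is naturally isomorphic to $C_h(X)$ via restriction to $X$, the function $h := (g\circ f)\rest_X = g\rest_X$ is a Higson function on $X$. By construction $h(x) = 0$ for every $x \in E \cap X$ and $h(x) = 1$ for every $x \in F \cap X$.

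Next I would argue by contradiction. Suppose $N_d(E\cap X, R) \cap N_d(F\cap X, R)$ is unbounded, and fix a base point $x_0$ and a sequence $(x_n)$ in this intersection with $d(x_0, x_n) \to \infty$. For each $n$ choose $e_n \in E \cap X$ and $p_n \in F \cap X$ with $d(x_n, e_n) \le R$ and $d(x_n, p_n) \le R$; then $d(e_n, p_n) \le 2R$ while $d(x_0, e_n), d(x_0, p_n) \to \infty$, so both sequences eventually leave every bounded subset of $X$. Applying the Higson condition for $h$ with $\varepsilon = 1/2$ and controlled distance $2R + 1$ produces a bounded set $B$ such that $|h(x) - h(y)| < 1/2$ whenever $x, y \in X \setminus B$ and $d(x,y) < 2R+1$. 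For all large $n$ both $e_n$ and $p_n$ lie outside $B$ and $d(e_n, p_n) \le 2R < 2R+1$, so $|h(e_n) - h(p_n)| < 1/2$; but $|h(e_n) - h(p_n)| = |0 - 1| = 1$, a contradiction. Hence the intersection is bounded.

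The substantive step, as opposed to the elementary escape-to-infinity bookkeeping, is the second one: one must push the separating function through the Higson compactification map $f$ in order to certify that its restriction $h$ is genuinely a \emph{Higson} function, and this is precisely where the hypothesis that $cX$ is a coarse compactification (not merely a metrizable one) does its work. The only remaining care is to insert the slack $2R+1$ so that the non-strict bound $d(e_n,p_n) \le 2R$ fits the strict inequality appearing in the definition of a Higson function.
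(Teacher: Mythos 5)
Your proof is correct, but it takes a different route from the paper. The paper's own proof is a two-line reduction: it pulls the disjoint closed sets $E$ and $F$ back along the map $f\colon hX\to cX$ to get disjoint closed subsets of the Higson compactification, notes that the relevant neighborhood intersections are unchanged because $f\rest_X=\id_X$, and then cites \cite[Proposition 2.3]{dranishnikov-keesling-uspenkij:98} as a black box for the statement about disjoint closed subsets of $hX$. You instead give a self-contained argument: Urysohn's lemma in the (compact Hausdorff, hence normal) space $cX$ produces a separating function $g$, the composition $g\circ f$ restricted to $X$ is a Higson function taking the value $0$ on $E\cap X$ and $1$ on $F\cap X$, and the defining $(\varepsilon,R)$-condition for Higson functions directly forbids an unbounded overlap of the two $R$-neighborhoods. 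In effect you have unpacked the cited proposition and re-proved the special case you need; this costs a paragraph of elementary bookkeeping (correctly handled, including the $2R+1$ slack and the escape-to-infinity estimates $d(x_0,e_n)\ge d(x_0,x_n)-R$) but buys independence from the external reference. Both arguments use the coarseness hypothesis in the same essential place, namely to transport the separation from $cX$ back to the Higson compactification via $f$.
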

\begin{proof}
Let $f : hX \to cX$ be a continuous surjection such that $f \rest_X =\id_X$.
Then $f^{-1}(E)$ and  $f^{-1}(F)$ are disjoint closed subsets of $hX$.
Since $f \rest_X =\id_X$, we have for $R>0$ 
\begin{align*}
N_d(E\cap X,R) \cap N_d(F\cap X,R) =N_d(f^{-1}(E)\cap X,R) \cap N_d(f^{-1}(F)\cap X,R),
\end{align*} which is bounded in $X$ by \cite[Proposition 2.3]{dranishnikov-keesling-uspenkij:98}.
\end{proof}

\begin{proof}[Proof of Theorem \ref{prop:coarse-cptn-Gromov-cptn}]
Let $(cX,d_c)$ be a coarse compactification of a proper metric space $(X,d)$. Fix $x_0 \in X$ and let 
\begin{align*}
n(x,y) &= \max\left\{ n \in \{0\} \cup \mathbb{N} : d_c(x,y) \leq2^{-n}\diam(cX) \right\} \text{ and}
\\ 
(x\mid y) ^c&= \min \{ d(x_0,x), d(x_0,y) ,n(x,y)\}
\end{align*}
for $x,y \in X$,
where $\diam(cX) =\sup\{d_c(z,w) : z,w \in cX \}$. 

\begin{claim}
\label{claim:prop:coarse-cptn-Gromov-cptn:cprod}
The function $(\cdot \mid \cdot)^c$ is a controlled product.
\end{claim}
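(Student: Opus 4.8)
The plan is to verify the four conditions (CP\ref{gp:npc})--(CP\ref{gp:higson}) and (CP\ref{gp:tot-bdd}) directly for $(\cdot\mid\cdot)^c$. Symmetry is immediate from the symmetry of $d_c$ and of $\min$, and (CP\ref{gp:proper}) holds with $\rho_2=\id_{\Rgz}$ since $(x\mid y)^c\le d(x_0,x)$ by definition. For (CP\ref{gp:npc}) I would argue through the triangle inequality for $d_c$: if $\min\{(x\mid y)^c,(y\mid z)^c\}\ge M$ with $M\ge 1$, then $d(x_0,x),d(x_0,z)\ge M$ and $n(x,y),n(y,z)\ge M$, so $d_c(x,y),d_c(y,z)\le 2^{-M}\diam(cX)$ and hence $d_c(x,z)\le 2^{-(M-1)}\diam(cX)$, which gives $n(x,z)\ge M-1$ and therefore $(x\mid z)^c\ge M-1$. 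Thus $\rho_1(t)=t+1$ works.

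The substantive condition is (CP\ref{gp:higson}), and this is where the coarse geometry of $cX$ enters through Lemma \ref{lem:DKU-disjoint-diverge}. The key step is a boundedness claim: for every $R\ge 0$ and $\varepsilon>0$ the set $A(R,\varepsilon)=\{x\in X:\exists\,y\in X,\ d(x,y)\le R,\ d_c(x,y)\ge\varepsilon\}$ is bounded in $X$. I would prove this by contradiction: given $x_k\in A(R,\varepsilon)$ with $d(x_0,x_k)\to\infty$ and witnesses $y_k$, extract a subsequence converging in the compact space $cX$ to points $\xi\ne\eta$ (distinct since $d_c(\xi,\eta)\ge\varepsilon$); choosing disjoint closed $d_c$-balls $E\ni\xi$ and $F\ni\eta$, the tails satisfy $x_k\in N_d(E\cap X,R)\cap N_d(F\cap X,R)$, which is bounded by Lemma \ref{lem:DKU-disjoint-diverge}, contradicting $d(x_0,x_k)\to\infty$. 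Granting the claim, I would put $\rho_3(s,t)=\max\{\,s+t,\ \sup\{d(x_0,x):x\in A(t,2^{-(s+1)}\diam(cX))\}\,\}$, which is finite and non-decreasing, and then run a three-way case analysis on which of $d(x_0,x),d(x_0,y),n(x,y)$ realises the minimum $(x\mid y)^c$: if the minimiser is $d(x_0,x)$ or $d(x_0,y)$ one uses $d(x_0,x)\le(x\mid y)^c+d(x,y)$, while otherwise $(x\mid y)^c=n(x,y)$ forces $d_c(x,y)>2^{-((x\mid y)^c+1)}\diam(cX)$, placing $x$ in the relevant $A(\cdot,\cdot)$ so that the supremum term of $\rho_3$ applies.

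For (CP\ref{gp:tot-bdd}) I would exploit compactness of $cX$ together with density of $X$. First, a compactness argument shows that far-away points approach the corona: for each $R$ there is $N_R$ with $d_c(x,cX\setminus X)<2^{-(R+2)}\diam(cX)$ whenever $d(x_0,x)\ge N_R$ (otherwise a sequence escaping to infinity would $d_c$-converge to a corona point while remaining a fixed distance from it). Next, for each $\xi\in cX\setminus X$ I would choose $y_\xi\in X$ with $d_c(\xi,y_\xi)<2^{-(R+2)}\diam(cX)$ and $d(x_0,y_\xi)\ge R$ (possible since $X$ is dense and any sequence in $X$ tending to $\xi$ has $d(x_0,\cdot)\to\infty$), and extract a finite subcover $\xi_1,\dots,\xi_m$ of $\{B_{d_c}(\xi,2^{-(R+2)}\diam(cX)):\xi\in cX\setminus X\}$. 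Then for $x$ with $d(x_0,x)\ge N_R$ a triangle-inequality estimate gives $d_c(x,y_{\xi_i})<3\cdot 2^{-(R+2)}\diam(cX)<2^{-R}\diam(cX)$ for some $i$, so $n(x,y_{\xi_i})\ge R$ and hence $(x\mid y_{\xi_i})^c\ge R$; setting $S_R=\max\{R,N_R,d(x_0,y_{\xi_1}),\dots,d(x_0,y_{\xi_m})\}$ and $\rho_4(t)=\sup\{S_R:R\le t\}$ finishes (CP\ref{gp:tot-bdd}).

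I expect (CP\ref{gp:higson}) to be the main obstacle, since it is the only condition in which the coarse structure genuinely intervenes: the remaining ones reduce to elementary estimates with $d_c$ and to compactness of $cX$, whereas the boundedness claim underlying (CP\ref{gp:higson}) is precisely the phenomenon that Lemma \ref{lem:DKU-disjoint-diverge} (via the Dranishnikov--Keesling--Uspenskij result) is designed to provide.
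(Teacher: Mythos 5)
Your proposal is correct and follows essentially the same route as the paper: verify (CP\ref{gp:npc})--(CP\ref{gp:tot-bdd}) directly, with the triangle inequality for $d_c$ giving $\rho_1(t)=t+1$, Lemma \ref{lem:DKU-disjoint-diverge} as the engine behind (CP\ref{gp:higson}), and compactness of $cX$ plus density of $X$ for (CP\ref{gp:tot-bdd}); the only organizational difference is that you establish the key boundedness and covering facts by sequential compactness and contradiction where the paper uses explicit finite covers and maxima. One small repair is needed in (CP\ref{gp:tot-bdd}): since $n(\cdot,\cdot)$ is integer-valued, the bound $d_c(x,y_{\xi_i})<\tfrac34\cdot 2^{-R}\diam(cX)$ does not yield $n(x,y_{\xi_i})\geq R$ for non-integer $R$ (e.g. $R=\tfrac12$ and $d_c=0.53\,\diam(cX)$ gives $n=0$); replace $R$ by an integer $n_R>R$ in the radii $2^{-(R+2)}\diam(cX)$, exactly as the paper does with its $n_R$, and the argument goes through.
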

\begin{proof}[Proof of Claim \ref{claim:prop:coarse-cptn-Gromov-cptn:cprod}]
It is obvious that $(\cdot \mid \cdot)^c$ is symmetric and satisfies (CP\ref{gp:proper}).
For (CP\ref{gp:npc}), let $x,y,z \in X$ and 
we show that 
\begin{align}
\label{eq:prop:coarse-cptn-Gromov-cptn:1}
\min\{ n(x, y ), n(y ,z)\} \leq n(x , z) +1,
\end{align}
which implies $\min\{ (x\mid y )^c, (y \mid z)^c\} \leq (x \mid z)^c +1$.
To show \eqref{eq:prop:coarse-cptn-Gromov-cptn:1},
let $m=n(x, y )$ and $n=n(y ,z)$.
We may assume $m \leq n$ without loss of generality.
Then $d_c(x,y) \leq 2^{-m}\diam(cX)$ and 
$d_c(y,z) \leq 2^{-n}\diam(cX)\leq 2^{-m}\diam(cX)$,
which imply
$d_c(x,z) \leq 2^{-m+1}\diam(cX)$, and hence
\begin{align*}
\min\{ n(x, y ), n(y ,z)\} =m =m-1 +1 \leq n(x,z) +1.
\end{align*}

For (CP\ref{gp:higson}), we will show that
for every $Q,R \in \Rgz$ there exists $S_{Q,R} \in \Rgz$ such that, 
for every $x,y \in X$, if  $(x\mid y)^c \leq Q$ and $d(x,y) \leq R$, then $d(x_0,x) \leq S_{Q,R}$.
Then the function $\rho_3 : \Rgz \times \Rgz \to \Rgz$ defined by
$\rho_3 (s,t) = \sup\{ S_{Q,R} : Q,R \in \Rgz , \, Q\leq s,\, R\leq t \}$
for $s,t \in \Rgz$ is as required.

Let $Q,R\geq 0$, fix $n \in \mathbb{N}$ with $n>Q$ and let $\delta = 2^{-n} \diam(cX)$.
Since the metric space $(cX,d_c)$ is compact,
there exists a finite subset $F$ of $cX$ such that 
$cX = \bigcup_{a \in F} B_{d_c} (a,\delta/4)$.
Let 
\begin{align*}
\mathcal{F} =\{ \{a,b\} \subset F : d_c(a,b) >\delta/2\}.
\end{align*}
Then, for each $\{a,b\} \in \mathcal{F}$,
$\overline{B}_{d_c}(a,\delta/4)$ and 
$\overline{B}_{d_c}(b,\delta/4)$ are disjoint closed subsets of $cX$, and hence, by Lemma \ref{lem:DKU-disjoint-diverge},
there exists $S_{a,b} >0$ such that 
\begin{align*}
N_d(\overline{B}_{d_c}(a,\delta/4)\cap X,R) \cap N_d(\overline{B}_{d_c}(b,\delta/4)\cap X,R) \subset B_d(x_0, S_{a,b}).
\end{align*}
Set 
$
S_{Q,R}=\max\{Q+R, S_{a,b} : \{a,b\} \in \mathcal{F}\}.
$
To show that $S_{Q,R}$ is a required constant,
let $x,y \in X$ with $(x\mid y)^c \leq Q$ and $d(x,y) \leq R$.
Since $(x\mid y)^c \leq Q$, we have $\min\{ d(x_0,x), d(x_0,y)\} \leq Q$ or $n(x,y)\leq Q$.
If $\min\{ d(x_0,x), d(x_0,y)\} \leq Q$, then $d(x_0,x) \leq Q +d(x,y)\leq   Q+R \leq S_{Q,R}$.
Assume that $n(x,y)\leq Q$. Then $n(x,y) <n$, and hence
\begin{align*}
d_c(x,y) > 2^{-n} \diam (cX) =\delta.
\end{align*} 
Since $x,y \in X \subset \bigcup_{a \in F} B_{d_c} (a,\delta/4)$, there exist $a_x, a_y \in F$ such that 
$d_c(x,a_x) <\delta/4$ and $d_c(y,a_y)<\delta/4$.
Then $\{a_x,a_y\} \in \mathcal{F}$ and 
\begin{align*}
x &\in N_d(\overline{B}_{d_c}(a_x,\delta/4)\cap X,R) \cap N_d(\overline{B}_{d_c}(a_y,\delta/4)\cap X,R)
 \\
 &\subset B_d(x_0, S_{a_x,a_y}) \subset B_d(x_0, S_{Q,R}),
\end{align*}
and thus $d(x_0,x)\leq S_{Q,R}$.

Finally, we show that $(\cdot \mid \cdot)^c$ satisfies (CP\ref{gp:tot-bdd}).
For $R\geq 0$, take $n_R \in \mathbb{N}$ with $n_R > R$ and let $\delta_R =2^{-n_R}\diam(cX)$.
Also, take a finite subset $E_R$ of $X \setminus B_d(x_0, R+1)$ satisfying $cX \setminus B_d(x_0, R+1) \subset \bigcup_{a \in E_R} B_{d_c} (a,\delta_R)$ by using the facts that $cX \setminus B_d(x_0, R+1)$ is compact and 
$X \setminus B_d(x_0, R+1)$ is dense in $cX \setminus B_d(x_0, R+1)$.
Then we set 
$S_R=\max\{R+1, d(x_0, a) : a \in E_R\}$.

Define $\rho_4 : \Rgz \to \Rgz$ by letting $\rho_4(t) =\sup \{S_R : R\leq t \}$
for $t\in \Rgz$. Then $\rho_4$ is non-decreasing.
To show that $\rho_4$ is a required function, let 
$R\geq 0$ and $x \in X \setminus \overline{B}_d(x_0,\rho_4(R))$.
Since $R+1 \leq S_R\leq \rho_4(R)$,
we have  $x \in X  \setminus B_d(x_0, R+1) \subset   \bigcup_{a \in E_R} B_{d_c} (a,\delta_R)$, and 
there exists $y \in E_R$ such that $d_c(x,y) <\delta_R$.
Since $y \in E_R$, 
we have $d(x_0,y) \leq S_R\leq \rho_4(R)$,
which implies 
$y \in \overline{B}_d(x_0,\rho_4(R))$.
Since $y \in E_R \subset X \setminus B_d(x_0, R+1)$
and  $x \notin \overline{B}_d(x_0,\rho_4(R))$,
we have $R<d(x_0,y)$ and $R\leq \rho_4(R)< d(x_0,x)$.
Also, since $d_c(x,y) <\delta_R=2^{-n_R}\diam(cX)$,
we have $R<n_R\leq n(x,y)$.
Hence $(x\mid y)^c >R$. 
Therefore, $(\cdot \mid \cdot)^c$ satisfies (CP\ref{gp:tot-bdd}), and it is a  controlled product.
\end{proof}

Let $\partial^c X$ (resp., $\overline{X}^c$) be the Gromov boundary 
(resp., Gromov compactification) with respect to $(\cdot \mid \cdot)^c$.
We show that $\overline{X}^c$ and $cX$ are equivalent compactifications of $X$.
For $x \in \partial^cX$ and $(x_i) \in x$, 
we have $d_c (x_i,x_j) \leq 2^{-n}\diam (cX)$ for any $n \in \mathbb{N}$ with
$(x_i\mid x_j)^c \geq n$, which implies that
$(x_i)$ is a Cauchy sequence in the metric space $(cX,d_c)$, and thus  $(x_i)$ converges to some point $y_x \in cX$.
Note that $y_x$ does not depend on the choice of $(x_i) \in x$ and that $y_x \notin X$ since $d(x_0,x_i) \to \infty$ as $(x_i\mid x_j)^c \to \infty$.
Define a map $f: \overline{X}^c \to cX$ by letting 
\begin{align*}
f(x) = 
\begin{cases}
x & \text{ if } x \in X, \\
y_x & \text{ if } (x_i) \in x \in \partial^c X.
\end{cases}
\end{align*}

To show that $f$ is injective, let $x, x' \in \overline{X}^c$ with 
$f(x)=f(x')$.
We may assume that $x,x'\in\partial^cX$ since 
$f\rest_X =\id_X$ and $f(z) \notin X$ for each $z\in \partial^c X$.
Let $(x_i)\in x$ and $(x_i')\in x'$.
Then $d(x_0,x_i) \to \infty$ and $d(x_0,x_i') \to \infty$.
Since both $(x_i)$ and $(x_i')$ converge to $f(x)$  in $(cX,d_c)$,
we have $d_c(x_i,x_i') \to 0$.
Thus $(x_i\mid x_i')^c \to \infty$, and hence $x=x'$.
Therefore $f$ is injective.

To show that $f$ is surjective, let $y \in cX$. 
We may assume that $y \notin X$.
Take a sequence $(x_i)$ in $X$ converging to $y$.
Then $d(x_0,x_i) \to \infty$ since $y \notin X$ and $(X,d)$ is proper,
and $n(x_i,x_j) \to \infty$ since $d_c(x_i,x_j) \to 0$.
Thus $(x_i\mid x_j)^c\to \infty$ as $i,j \to \infty$, and hence 
$[(x_i)]\in \partial^c X$.
We also have $f([(x_i)]) =y$ by the definition of $f$.
Thus $f$ is surjective.

Finally, we show that $f$ is homeomorphism.
It suffices to show that $f$ is continuous at every point in $\partial^cX$.
Let $x \in \partial^cX$ and $\varepsilon>0$.
Choose $n \in \mathbb{N}$ satisfying $2^{-n} \diam (cX) <\varepsilon/3$ and 
let $V_n$ be the set defined in Definition \ref{defin:Vn} with respect to $(\cdot \mid \cdot)^c$.
Let $x' \in V_n[x]$, $(x_i)\in x$ and $(x_i')\in x'$, where we let $x_i'=x'$ for any $i \in \mathbb{N}$ if $x' \in X$ (see Definition \ref{def:gromov-boundary}).
Then $(x\mid x')^c >n$ since $x \in \partial^c X$.
Take $j \in \mathbb{N}$ such that 
$(x_j\mid x_j')^c >n$, $d_c(x_j,f(x)) <\varepsilon/3$ and 
$d_c(x_j',f(x'))<\varepsilon/3$.
Since $(x_j\mid x_j')^c >n$, we have $d_c(x_j,x_j') <2^{-n}\diam(cX) <\varepsilon/3$, and thus $d_c(f(x),f(x'))<\varepsilon$.
Therefore $f$ is continuous at $x$.
\end{proof}

\section{Coarsely convex spaces}
\label{section:coarsely-convex}
A geodesic hyperbolic space can be considered as a ``coarsely negatively curved'' space 
and has been studied very well. 
Then the following is a natural question: 
What is a ``coarsely non-positively curved'' space? 
The first two authors introduced a coarsely convex space
as such a space and studied it \cite{fukaya-oguni:pre18}. 
Especially, the ideal boundary of a coarsely convex space was constructed 
and essentially used in the proof of the coarse Baum-Connes conjecture for the space.  
In this section, we reconstruct the ideal boundary as the Gromov boundary by a controlled product 
that was given in \cite{fukaya-oguni:pre18}. 

Recall that, for a metric space $(X,d)$, a map $\gamma: [0,t_\gamma] \to X$ is said to be a $(\lambda,k)$-quasi-geodesic segment, where $\lambda\geq 1$ and $k\geq 0$, if 
\begin{align*}
 \frac{1}{\lambda}|t-s| -k \leq d(\gamma(t),\gamma(s)) \leq \lambda| t-s| +k
\end{align*}
for every $s,t \in [0,t_\gamma]$.

\begin{defin}
\label{def:cBNC}
Let $(X,d)$ be a metric space. Let $\lambda\ge 1$, $k \ge 0$, $E\ge 1$,
and $C\geq 0$ be constants, $\theta\colon \Rgz\to\Rgz$ a non-decreasing function and $\mathcal{L}$ a family of $(\lambda,k)$-quasi-geodesic segments. 
The metric space $X$ is {\itshape
$(\lambda,k,E,C,\theta,\mathcal{L})$-coarsely convex} if
it satisfies the following:
\begin{enumerate}[(i)$^q$] 
 \item \label{qconn}
       For $x,y\in X$, there exists 
       $\gamma\in \mathcal{L}$ such that $\gamma(0) = x$ and $\gamma(t_\gamma) = y$.
 \item \label{qconvex}
       For $\gamma, \eta \in \mathcal{L}$, $t\in [0,t_\gamma]$, $s\in [0,t_\eta]$, and $0\le c\le 1$, we have 
       \begin{align*}
	d(\gamma(ct),\eta(cs))
	\le cEd(\gamma(t),\eta(s)) 
	      + (1-c)Ed(\gamma(0), \eta(0))+ C.
       \end{align*}
 \item \label{qparam-reg}       
       For $\gamma,\eta\in \mathcal{L}$, $t\in [0,t_\gamma]$ and $s\in [0,t_\eta]$, we have
       \begin{align*}
	|t-s| \le \theta(d(\gamma(0),\eta(0))+d(\gamma(t),\eta(s))).
       \end{align*}
\end{enumerate} 
A $(\lambda,k,E,C,\theta,\mathcal{L})$-coarsely convex space is simply called 
a  \emph{coarsely convex space} (with respect to $\mathcal{L}$). 
\end{defin}

\begin{ex}
\label{ex:busemann-cconv}
Let $(X,d)$ be a Busemann space and $\mathcal{L}^g$ the set of all geodesic segments on $X$. Then $X$ is $(1,0,1,0,\id_{\Rgz},\mathcal{L}^g)$-coarsely convex.  
\end{ex}

\begin{ex}
\label{ex:gromov-cconv}
Let $(X,d)$ be a proper geodesic hyperbolic space and $(\cdot\mid \cdot)_{x_0}$ the Gromov product at $x_0 \in X$ defined in Observation \ref{ex:gromov-prod}.
There exists $\delta \geq 0$ satisfying \eqref{eq:gromov-hyp}.
Let $\mathcal{L}^g$ denote the set of all geodesic segments on $X$.
Then every geodesic triangle in $X$ is $4\delta$-thin
(see \cite[Definition 16 and Proposition 21]{ghys-delaharpe:1990}), and
$(X,d)$ is $(1,0,1,8\delta,\id_{\Rgz},\mathcal{L}^g)$-coarsely convex
(for (\ref{qconvex})$^q$, see \cite[Proposition 25]{ghys-delaharpe:1990}).  
\end{ex}

Let $X$ be a $(\lambda,k,E,C,\theta,\mathcal{L})$-coarsely convex space, 
fix $x_0 \in X$ and set 
$\mathcal{L}_{x_0} = \{\gamma \in \mathcal{L} : \gamma(0)=x_0\}$.
For a constant $D >0$, define functions 
$(\cdot \mid \cdot ) ^D :\mathcal{L}_{x_0} \times \mathcal{L}_{x_0} \to \Rgz$
and 
$(\cdot \mid \cdot ) ^D :X \times X \to \Rgz$
by letting 
\begin{align}
(\gamma\mid \eta)^D
&=\sup\{t\in [0, \min\{t_\gamma,t_\eta\}]: d(\gamma(t),\eta(t))\le D\}, \ \gamma,\eta \in \mathcal{L}_{x_0} \text{ and}\\
\label{eq:cconv-prod}
(x\mid y)^D
&=\sup\{ (\gamma \mid \eta)^D: \gamma,\eta\in\mathcal{L}_{x_0},\, x=\gamma(t_\gamma),\, y=\eta(t_\eta)\}, \ x,y \in X
\end{align}
(see \cite[Definitions 4.6 and 4.11]{fukaya-oguni:pre18}).

\begin{prop}
\label{prop:cconv-cprod}
Let $D\geq \max\{ C+1 ,\lambda\theta(0) +k\}$. Then the function $(\cdot\mid \cdot)^D : X \times X \to \Rgz$
defined in \eqref{eq:cconv-prod} is 
symmetric and satisfies the following conditions:
\begin{enumerate}[{\rm (i)}]
\item 
$\min\{(x\mid y)^D, (y\mid z)^D \} \leq  5E^2D (x\mid z)^D$
for every $x,y,z \in X$.
\item 
 $(x \mid y)^D\leq \lambda d(x_0,x)+\lambda k$ for every $x,y \in X$. 
\item 
$d(x_0,x)\le  \lambda E(x\mid y)^D(1+\lambda\theta(d(x,y))+d(x,y)+2k) +\lambda \theta (d(x,y)) +k$
for every $x,y\in X$.
\item $(\cdot\mid \cdot)^D$ satisfies (CP\ref{gp:tot-bdd}).
\end{enumerate}
In particular, $(\cdot\mid \cdot)^D$ is a controlled product at $x_0$. 

Moreover, if $D'\geq \max\{ C+1 ,\lambda\theta(0) +k\}$, then the controlled products $(\cdot \mid \cdot )^D$ and $(\cdot \mid \cdot )^{D'}$
 are coarsely equivalent. 
\end{prop}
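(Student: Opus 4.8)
The symmetry of $(\cdot\mid\cdot)^D$ is immediate from the definition, so the work lies in (i)--(iv), and the engine behind all four is a pair of fellow-traveling lemmas extracted from the convexity axiom (ii)$^q$ of Definition~\ref{def:cBNC} applied to members of $\mathcal{L}_{x_0}$ (where the common basepoint kills the term $(1-c)E\,d(\gamma(0),\eta(0))$), exactly as in the Busemann case of Example~\ref{ex:Busemann-CP}. First, a \emph{contraction lemma}: if $\gamma,\eta\in\mathcal{L}_{x_0}$ satisfy $d(\gamma(s_0),\eta(s_0))\le K$, then taking $t=s=s_0$ and scaling by $c=t/s_0$ gives $d(\gamma(t),\eta(t))\le \tfrac{t}{s_0}EK+C$ for $0\le t\le s_0$; in particular a $K$-close pair is forced to within $D$ at the rescaled parameter $cs_0$ with $c=(D-C)/(EK)$, and this is where the hypothesis $C+1\le D$ (so $D-C\ge 1$) enters. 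Second, a \emph{same-endpoint lemma}: if $\eta,\eta'\in\mathcal{L}_{x_0}$ have $\eta(t_\eta)=\eta'(t_{\eta'})$, then (ii)$^q$ at the common endpoint yields $d(\eta(ct_\eta),\eta'(ct_{\eta'}))\le C$ for all $c$, while (iii)$^q$ bounds the length gap by $|t_\eta-t_{\eta'}|\le\theta(0)$; reparametrizing along $\eta'$ by the quasi-geodesic inequality then gives $d(\eta(t),\eta'(t))\le C+\lambda\theta(0)+k$ at equal parameters, a quantity $\le 2D$ by the standing bound on $D$.

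For (ii), the quasi-geodesic lower bound gives $t_\gamma\le\lambda d(x_0,x)+\lambda k$ for any $\gamma\in\mathcal{L}_{x_0}$ ending at $x$, and $(\gamma\mid\eta)^D\le\min\{t_\gamma,t_\eta\}\le t_\gamma$, so passing to the supremum yields $(x\mid y)^D\le\lambda d(x_0,x)+\lambda k$. For (iii), fix $\gamma,\eta\in\mathcal{L}_{x_0}$ ending at $x,y$ (using (i)$^q$) and set $t_0=\min\{t_\gamma,t_\eta\}$. At least one of $\gamma(t_0),\eta(t_0)$ equals the corresponding endpoint, so only a single correction term $\lambda|t_\gamma-t_\eta|+k\le\lambda\theta(d(x,y))+k$ (again (iii)$^q$) appears, giving $d(\gamma(t_0),\eta(t_0))\le M:=d(x,y)+\lambda\theta(d(x,y))+k$. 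If the fellow-traveling already reaches $t_0$ we conclude directly; otherwise the contraction lemma forces a $D$-close point at parameter $(D-C)t_0/(EM)$, whence $t_0\le EM\,(x\mid y)^D/(D-C)\le EM\,(x\mid y)^D$. Combining with $t_\gamma\le t_0+\theta(d(x,y))$ and $d(x_0,x)\le\lambda t_\gamma+k$ produces a bound dominated by the asserted one, the extra $1+k$ in the stated coefficient being slack.

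Part (i) is the main obstacle. Assuming without loss of generality $(x\mid y)^D\le(y\mid z)^D$, I would choose near-optimal $\gamma,\eta$ (ending at $x,y$) and $\eta',\zeta$ (ending at $y,z$), with the $D$-closeness holding at parameters $t_1,t_2$ close to $(x\mid y)^D,(y\mid z)^D$. On $[0,\min\{t_1,t_2\}]$ the contraction lemma (applied to $\gamma,\eta$ and to $\eta',\zeta$) together with the same-endpoint lemma (applied to $\eta,\eta'$, both ending at $y$) give, by the triangle inequality, $d(\gamma(t),\zeta(t))\le K$ with $K=2(ED+C)+(C+\lambda\theta(0)+k)$. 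Contracting this $K$-close pair to within $D$ yields $(x\mid z)^D\ge(D-C)\min\{t_1,t_2\}/(EK)$, that is, $\min\{(x\mid y)^D,(y\mid z)^D\}\le \tfrac{EK}{D-C}(x\mid z)^D$. The delicate point is purely the constant: the crude triangle estimate already bounds $EK/(D-C)$ by a multiple of $E^2D$ via $C\le D-1$, $\lambda\theta(0)+k\le D$, $D-C\ge 1$, and a more careful accounting is what pins it at exactly $5E^2D$. I expect this bookkeeping, together with the non-continuity of quasi-geodesics (which forces one to work with $t_n\uparrow(\gamma\mid\eta)^D$ rather than attained maxima), to be the only real friction.

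Finally, for (CP\ref{gp:tot-bdd}), given $R$ and $x$ with $d(x_0,x)$ large, take $\gamma\in\mathcal{L}_{x_0}$ to $x$, cut it at $T=2EDR+\theta(0)$, and set $y=\gamma(T)$; choosing any $\eta\in\mathcal{L}_{x_0}$ to $y$, the same-endpoint comparison (now $\gamma(T)=\eta(t_\eta)=y$) plus one contraction give $(x\mid y)^D\ge(\gamma\mid\eta)^D>R$, while $d(x_0,y)\le\lambda T+k$. Setting $\rho_4(R)=2\lambda EDR+\lambda\theta(0)+\lambda k$ makes both ``$y\in\overline{B}_d(x_0,\rho_4(R))$'' and ``$t_\gamma\ge T$ whenever $d(x_0,x)>\rho_4(R)$'' hold, so (CP\ref{gp:tot-bdd}) follows and $(\cdot\mid\cdot)^D$ is a controlled product. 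For the last assertion, assume $D\le D'$: monotonicity $(x\mid y)^D\le(x\mid y)^{D'}$ is trivial since a $D$-close parameter is $D'$-close, while the contraction lemma applied to a $D'$-close pair gives $(x\mid y)^{D'}\le\tfrac{ED'}{D-C}(x\mid y)^D$. Thus each product dominates a linear function of the other, and Remark~\ref{rem:f-preceq-g} yields coarse equivalence of $(\cdot\mid\cdot)^D$ and $(\cdot\mid\cdot)^{D'}$.
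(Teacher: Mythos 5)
Your proposal follows essentially the same route as the paper's proof: (ii) from the quasi-geodesic upper bound on $t_\gamma$; (i) by chaining $\gamma\to\eta\to\eta'\to\xi$ at a common parameter via the convexity axiom (ii)$^q$ together with the same-endpoint comparison for $\eta,\eta'$, followed by one final contraction; (iii) by the same dichotomy on $t_0=\min\{t_\gamma,t_\eta\}$ with (iii)$^q$ controlling $|t_\gamma-t_\eta|$; (iv) by cutting a quasi-geodesic to $x$; and the coarse equivalence by monotonicity plus one contraction. The only delta is the bookkeeping you yourself flag in (i): the paper does not contract the pair whose witness parameter is already the minimum (it is $D$-close there by hypothesis) and only contracts the other pair, which brings the chained bound down to $D+(C+\lambda\theta(0)+k)+(ED+C)\le 5ED$ and hence yields exactly the stated constant $5E^2D$, whereas contracting both pairs as you do produces a slightly larger multiple of $E^2D$.
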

\begin{proof}
For every $x,y \in X$ and $\gamma ,\eta \in \mathcal{L}_{x_0}$ with  $x=\gamma(t_\gamma)$ and $y=\eta(t_\eta)$,
we have that $d(\gamma(0),\eta(0))=0$ and $\min\{t_\gamma, t_\eta \} \leq t_\gamma \leq \lambda d(x_0,x) + \lambda k$ since $\gamma$ is  $(\lambda,k)$-quasi-geodesic.
This shows that $(\gamma \mid \eta)^D \leq \lambda d(x_0,x)+\lambda k$, and hence $(x\mid y)^D \in \Rgz$ and (ii) holds.
Obviously $(\cdot \mid \cdot)^D$ is symmetric. 

To show (i), 
let $A=5E^2D$ and assume that $x,y,z \in X$ and $0<r < \min\{(x\mid y)^D, (y\mid z)^D \}$. It suffices to show that $r\leq   A(x\mid z)^D$.
Since $r < \min\{(x\mid y)^D, (y\mid z)^D \}$, there are 
$\gamma, \eta, \eta' , \xi \in \mathcal{L}_{x_0}$, 
$t\in [0,\min\{t_\gamma,t_\eta\}]$ and $t' \in [0,\min\{t_{\eta'},t_\xi\}]$ satisfying
\begin{align*}
&x=\gamma(t_\gamma),\ y=\eta(t_\eta) =\eta'(t_{\eta'}), \ 
z= \xi(t_\xi), \
\\
&  d(\gamma(t),\eta(t)) \leq D,\
d(\eta'(t'),\xi(t'))\leq D   \text{ and } r\leq \min\{t,t'\}.
\end{align*} 
Without loss of generality, we may assume $t\leq t'$.
Then $t=ct'$ for some $c \in [0,1]$.
By (\ref{qconvex})$^q$,  
\begin{align}
\label{eq:prop:cconv-cprod:1}
&d(\eta'(t),\xi (t) ) \leq cE d(\eta'(t'),\xi(t')) +C \leq cED+C \leq ED+C \text{ and }\\
\label{eq:prop:cconv-cprod:3}
& d\left( \eta(t), \eta' \left( \frac{t}{t_\eta} t_{\eta'}\right) \right) \leq C.
\end{align}
By  \eqref{eq:prop:cconv-cprod:3}, the fact that $\eta'$ is  $(\lambda,k)$-quasi-geodesic and  (\ref{qparam-reg})$^q$,
we have
\begin{align}
\label{eq:prop:cconv-cprod:2}
d(\eta(t),\eta'(t))
&\leq d\left( \eta(t), \eta' \left( \frac{t}{t_\eta} t_{\eta'}\right) \right)
+d\left( \eta' \left( \frac{t}{t_\eta} t_{\eta'}\right), \eta'(t) \right)\\
\notag
&\leq C + \lambda |t_\eta -t_{\eta'}|\frac{t}{t_{\eta}} +k
\leq C + \lambda\theta(0) +k.
\end{align}
Inequalities \eqref{eq:prop:cconv-cprod:1} and \eqref{eq:prop:cconv-cprod:2}, $E\geq 1$ and $D\geq \max\{ C+1 ,\lambda\theta(0) +k\}$ imply
\begin{align*}
d(\gamma(t), \xi(t))
&\leq d(\gamma(t),\eta(t)) +d(\eta(t),\eta'(t)) + d(\eta'(t), \xi (t))
\\
&\leq D + (  C + \lambda\theta(0) +k) +( ED+C )
\\&
\leq 5ED= AE^{-1}.
\end{align*}
Since $A\geq 1$, (\ref{qconvex})$^q$ yields
\begin{align*}
d(\gamma(A^{-1}t) , \xi ( A^{-1} t))  \leq A^{-1} E d(\gamma(t), \xi(t)) +C
\leq 1+C \leq D,
\end{align*}
which implies  $A^{-1}t \leq (x\mid z)^D$ and hence $r\leq t\leq A(x\mid z)^D$.
Therefore, $\min\{(x\mid y)^D, (y\mid z)^D \} \leq  A(x\mid z)^D$.

For (iii), let $x,y \in X$ and take $\delta >0$ arbitrarily.
Then there exist $\gamma, \eta \in \mathcal{L}_{x_0}$ and $t \in [0,\min\{t_\gamma,t_\eta \}]$ satisfying
\begin{align*}
 x=\gamma(t_\gamma),\, y=\eta(t_\eta),\,
 d(\gamma(t),\eta(t))\le D \text{ and } (x\mid y)^D-\delta <t \leq (x\mid y)^D.
\end{align*}
Let $t_0 = \min \{ t_\gamma, t_\eta\}$.
If $t_0 \leq (x\mid y)^D$, then, since $\gamma$ is $(\lambda,k)$-quasi-geodesic and
$|t_\gamma -t_\eta| \leq \theta(d(\gamma(t_\gamma),\eta(t_\eta))=\theta(d(x,y))$ by (\ref{qparam-reg})$^q$, we have
\begin{align*}
d(x_0,x) &\leq \lambda t_\gamma +k 
= \lambda (t_0 +t_\gamma -t_0 ) +k
\leq \lambda (x\mid y)^D + \lambda|t_\gamma -t_\eta| +k
\\
&
\leq \lambda (x\mid y)^D + \lambda\theta (d(x,y)) +k
\\
&
\leq \lambda E(x\mid y)^D(1+\lambda\theta(d(x,y))+d(x,y)+2k) +\lambda \theta (d(x,y)) +k.
\end{align*}
Assume that $t_0 > (x\mid y)^D$ and put $t_1 = \min \{ t+\delta , t_0\}$.
Since $t_1 > (x\mid y)^D$, we have $d(\gamma(t_1), \eta(t_1)) >D$.
This, $t_1 \leq t_0$ and (\ref{qconvex})$^q$ imply that 
\begin{align*}
D < d(\gamma (t_1). \eta (t_1))
\leq \frac{t_1}{t_0} E d (\gamma(t_0),\eta (t_0)) +C, 
\end{align*}
and hence, since $D \geq C +1$ and $t_1 \leq t+\delta \leq (x\mid y)^D+\delta$, 
\begin{align}
\label{eq:(iii)-1}
t_0 <\frac{t_1 E}{D-C} d(\gamma(t_0), \eta (t_0) )\leq 
E((x\mid y)^D+\delta) d(\gamma(t_0), \eta (t_0)).
\end{align}
Here, from the facts that  $\gamma$ and $\eta $ are $(\lambda,k)$-quasi-geodesic, $t_0 =\min \{ t_\gamma, t_\eta\}$ and  
$|t_\gamma -t_\eta| \leq \theta(d(x,y))$, we have
\begin{align}
\label{eq:(iii)-2}
d(\gamma(t_0), \eta (t_0))
&\leq d(\gamma(t_0), x) +d(x,y) +d(y, \eta (t_0))
\\
&
\leq \lambda |t_0-t_\gamma| +k + d(x,y) +\lambda |t_0-t_\eta| +k
\notag
\\
&\leq \lambda |t_\gamma - t_\eta| + d(x,y) +2k 
\leq 1+\lambda \theta (d(x,y)) +d(x,y) +2k. \notag
\end{align}
By \eqref{eq:(iii)-1} and \eqref{eq:(iii)-2},
\begin{align*}
d(x_0,x) 
&\leq \lambda t_\gamma +k
\leq \lambda t_0 +\lambda |t_\gamma -t_\eta| +k
\\
&
< \lambda E((x\mid y)^D+\delta)(1+\lambda\theta(d(x,y))+d(x,y)+2k) +\lambda \theta (d(x,y)) +k.
\end{align*}
Since $\delta$ was taken arbitrarily, we have the required inequality.

Next, we show that $(\cdot \mid \cdot)^D$ satisfies (CP\ref{gp:tot-bdd}). 
Define $\rho_4 : \Rgz \to \Rgz$ by letting
\begin{align*}
\rho_4 (t)= \lambda(t+\theta(0)) +k.
\end{align*}
To show that $\rho_4$ is a required function,
let $R\geq 0$ and $x \in X \setminus \overline{B}_d(x_0,\rho_4(R))$.
Choose $\gamma \in \mathcal{L}_{x_0}$ with $\gamma(t_\gamma)=x$ by using (\ref{qconn})$^q$.
Then 
$ \lambda (R+\theta(0)) +k = \rho_4(R) < d(x_0,x) \leq \lambda t_\gamma +k$ 
and thus $R+\theta(0) < t_\gamma$. 
Let $R'=R+\theta (0)$ and $y=\gamma(R')$.
Since $d(x_0,y)=d(x_0,\gamma(R'))\le \lambda R'+k = \rho_4(R)$, we have $y \in \overline{B}_d(x_0,\rho_4(R))$. 
Choose $\eta \in \mathcal{L}_{x_0}$ with $\eta (t_\eta) =y$. 
Then $\eta(t_\eta)=\gamma(R')$ and thus $|R'-t_\eta|\le \theta(0)$ by (\ref{qparam-reg})$^q$.   
Recalling $D\ge  \lambda\theta(0)+k$,
we have that, if $R'\geq t_\eta$, then
\begin{align*}
d(\gamma(t_\eta), \eta(t_\eta))
= d(\gamma(t_\eta), \gamma(R'))\le \lambda|t_\eta-R'|+k\le \lambda\theta(0)+k \leq D,
\end{align*}
and if $R'\le t_\eta$, then
\begin{align*}
d(\gamma(R'), \eta(R'))
= d(\eta(t_\eta), \eta(R'))\le \lambda|t_\eta-R'|+k\le \lambda\theta(0)+k \leq D.
\end{align*}
Hence 
$(x \mid y)^D\ge \min\{t_\eta,R'\}$.
Since $R'-t_\eta \leq |R'-t_\eta|\le \theta(0)$, we have $t_\eta \geq R' -\theta (0) = R$ and
thus  $(x \mid y)^D\ge R$. 
Therefore $(\cdot \mid \cdot)^D$ is a controlled product.

Finally, suppose that $D' \geq \max\{ C+1 ,\lambda\theta(0) +k\}$ and 
we show that $(\cdot \mid \cdot )^D$ and $(\cdot \mid \cdot )^{D'}$ are coarsely equivalent. 
Without loss of generality, we may assume that $ D'\leq D$. 
Then it is clear that $(\cdot  \mid \cdot )^{D'} \preceq  (\cdot  \mid \cdot )^D$.
To show $(\cdot  \mid \cdot )^{D} \preceq  (\cdot  \mid \cdot )^{D'}$,
let $x,y \in X$ and we shall show that  
$(x \mid y)^{D} \leq ED  (x  \mid y )^{D'}$.
Indeed, assume that $r < (x \mid y)^{D}$. 
Then there exist $\gamma, \eta \in \mathcal{L}_{x_0}$ and $t \in [0,\min\{t_\gamma,t_\eta \}]$ satisfying
\begin{align*}
 x=\gamma(t_\gamma),\, y=\eta(t_\eta),\,
 d(\gamma(t),\eta(t))\le D \text{ and } r <t.
\end{align*}
Since $1 \leq D \leq ED$ and $1+C \leq D'$, by (\ref{qconvex})$^q$, we have
\begin{align*}
d(\gamma((ED)^{-1}t),\eta((ED)^{-1}t))\leq  (ED)^{-1} Ed(\gamma(t),\eta(t))+C
\leq 1+C\leq D'.
\end{align*}
Hence $(ED)^{-1}t \leq  (x \mid y )^{D'}$ and thus 
$t \leq  ED(x \mid y )^{D'}$.
Therefore $(x \mid y )^{D} \leq  ED(x \mid y )^{D'}$.
\end{proof}

\begin{rem}
In \cite[Definition 4.11]{fukaya-oguni:pre18}, 
the value of $(x\mid y)^D$ is replaced with $0$ 
whenever $\min\{d(x_0,x),d(x_0,y)\}\le 2\lambda\theta(0)+k$.
But this replacement is not essential 
since the replaced symmetric function is coarsely equivalent to the original, see Remark \ref{rem:cprod-ce}.
\end{rem}

\begin{rem}
In \cite[Definition 4.4]{fukaya-oguni:pre18},
the ideal boundary $\partial_{x_0} X$ for a proper coarsely convex space $X$ is defined by means of quasi-geodesic rays.
It was proved in \cite[Theorem 8.8]{fukaya-oguni:pre18} that the $C^*$-algebra $C(X\cup \partial_{x_0} X)$ of all continuous functions on the ideal compactification $X\cup \partial_{x_0} X$ is isomorphic to the $C^*$-algebra $C_g(X)$ of all Gromov functions on $X$ with respect to the controlled product $(\cdot \mid \cdot)^D$.
Thus, the ideal compactification in \cite[Definition 4.4]{fukaya-oguni:pre18} and the Gromov compactification with respect to  $(\cdot \mid \cdot)^D$ are equivalent.
\end{rem}

\begin{ex}
Let $(X,d)$,  $(\cdot\mid \cdot)_{x_0}$ and $\delta$ be the same as in Example \ref{ex:gromov-cconv}.  
Let $D=8\delta+1$.
Then $(\cdot\mid \cdot)_{x_0}$ and $(\cdot \mid \cdot)^D$ as in \eqref{eq:cconv-prod}
 are coarsely equivalent. More precisely, we have the following:
\begin{claim}
$(x\mid y)_{x_0} \leq (x\mid y)^D \leq (x\mid y)_{x_0} +D$
for every $x,y \in X$.
\end{claim}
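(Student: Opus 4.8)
The plan is to work directly with the Gromov product, using that here $\mathcal{L}=\mathcal{L}^g$ consists of genuine geodesic segments, so every $\gamma,\eta\in\mathcal{L}_{x_0}$ is an isometric embedding with $\gamma(0)=\eta(0)=x_0$, $t_\gamma=d(x_0,x)$ and $t_\eta=d(x_0,y)$. The single identity driving everything is that, for such $\gamma,\eta$ and $t\le\min\{t_\gamma,t_\eta\}$,
\[
(\gamma(t)\mid\eta(t))_{x_0}=t-\tfrac12 d(\gamma(t),\eta(t)),
\]
so that $d(\gamma(t),\eta(t))\le D$ is equivalent to $(\gamma(t)\mid\eta(t))_{x_0}\ge t-D/2$. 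I would also record the elementary facts $(\gamma(t)\mid x)_{x_0}=t$ and $(\eta(t)\mid y)_{x_0}=t$, which hold because $\gamma(t)$ and $\eta(t)$ lie on geodesics from $x_0$ to $x$ and to $y$. Write $g=(x\mid y)_{x_0}$ and note $g\le\min\{t_\gamma,t_\eta\}$ by the triangle inequality, so the parameter value $t=g$ is admissible.

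For the lower bound $g\le(x\mid y)^D$, since $(x\mid y)^D$ is a supremum over geodesic pairs it suffices to produce one pair with $(\gamma\mid\eta)^D\ge g$, and I claim any pair works. Applying the hyperbolic inequality \eqref{eq:gromov-hyp} first to the triple $\gamma(t),x,y$ and then to $\gamma(t),y,\eta(t)$, and substituting the recorded values, yields
\[
(\gamma(t)\mid\eta(t))_{x_0}\ge\min\{t,g\}-2\delta
\]
for all admissible $t$. Taking $t=g$ gives $(\gamma(g)\mid\eta(g))_{x_0}\ge g-2\delta$, i.e. $d(\gamma(g),\eta(g))\le 4\delta\le D$; hence $g$ belongs to the set defining $(\gamma\mid\eta)^D$, so $(\gamma\mid\eta)^D\ge g$ and therefore $(x\mid y)^D\ge g$.

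For the upper bound $(x\mid y)^D\le g+D$, since the product is a supremum I must bound every pair, so I fix $\gamma,\eta$ and an arbitrary admissible $t$ with $d(\gamma(t),\eta(t))\le D$ and show $t\le g+D$; the supremum $(\gamma\mid\eta)^D$ then inherits the bound. The equivalence above gives $(\gamma(t)\mid\eta(t))_{x_0}\ge t-D/2$. Applying \eqref{eq:gromov-hyp} to $x,\gamma(t),y$ gives $\min\{t,(\gamma(t)\mid y)_{x_0}\}\le g+\delta$. If $t\le g+\delta$ I am done since $\delta\le D$; otherwise the minimum forces $(\gamma(t)\mid y)_{x_0}\le g+\delta$, and feeding this into \eqref{eq:gromov-hyp} applied to $\gamma(t),\eta(t),y$ (using $(\gamma(t)\mid\eta(t))_{x_0}\le t$ to evaluate the minimum) gives $(\gamma(t)\mid\eta(t))_{x_0}\le g+2\delta$. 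Combined with $(\gamma(t)\mid\eta(t))_{x_0}\ge t-D/2$ this yields $t\le g+2\delta+D/2=g+6\delta+\tfrac12\le g+D$.

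The content is entirely in the two displayed inequalities; the only thing to watch is the bookkeeping of constants, i.e. checking that $4\delta$, $\delta$, and $6\delta+\tfrac12$ are all at most $D=8\delta+1$, which is where the specific value of $D$ enters. I expect the only mild obstacle to be organizing the repeated applications of \eqref{eq:gromov-hyp} so that each one feeds the correct lower or upper estimate into the next, together with the clean handling of the case split $t\le g+\delta$ versus $t>g+\delta$; there is no genuine geometric difficulty beyond the standard fact that two geodesics issuing from $x_0$ stay $4\delta$-close up to parameter $g$ and separate thereafter.
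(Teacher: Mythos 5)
Your argument is correct, and it takes a genuinely different route from the paper's. The paper proves both inequalities geometrically: the lower bound by quoting the $4\delta$-thinness of geodesic triangles from Ghys--de la Harpe, and the upper bound by introducing a third geodesic $\eta$ from $x$ to $y$ and using thinness twice to get the explicit divergence estimate $d(\gamma_x(t),\gamma_y(t))\geq 2t-2(x\mid y)_{x_0}-8\delta$ for $t\geq (x\mid y)_{x_0}+D$. You instead stay entirely inside the Gromov-product formalism: the identity $(\gamma(t)\mid\eta(t))_{x_0}=t-\tfrac12 d(\gamma(t),\eta(t))$ converts the distance condition into a product condition, and repeated use of the four-point inequality \eqref{eq:gromov-hyp} (which is the paper's actual definition of hyperbolicity) does the rest; your case split on $\min\{t,(\gamma(t)\mid y)_{x_0}\}\leq g+\delta$ is handled correctly, and the evaluation of the minimum via $(\gamma(t)\mid\eta(t))_{x_0}\leq t=(\eta(t)\mid y)_{x_0}$ is sound. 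What your approach buys is self-containedness — no appeal to the thin-triangles characterization, hence no external citation — and a slightly sharper upper bound, $t\leq g+6\delta+\tfrac12$ rather than $g+D$. What the paper's approach buys is the geometrically transparent statement that the two geodesics separate linearly beyond parameter $(x\mid y)_{x_0}$, which makes it visibly clear that the supremum defining $(x\mid y)^D$ cannot exceed $(x\mid y)_{x_0}+D$. Both proofs correctly handle the supremum over all geodesic pairs (one pair suffices for the lower bound, every pair must be bounded for the upper bound), and both constant checks ($4\delta\leq D$ and $6\delta+\tfrac12\leq D$, resp.\ $2t-2g-8\delta\geq D+1$) go through with $D=8\delta+1$.
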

\begin{proof}
Let $x, y \in X$.
Since  every geodesic triangle in $X$ is $4\delta$-thin (\cite[Definition 16 and Proposition 21]{ghys-delaharpe:1990}),
we have $d(\gamma_x(t), \gamma_y (t)) \leq 4\delta$ for every geodesic segments $\gamma_x$ from $x_0$ to $x$ and $\gamma_y$ from to $y$ and for every $t \leq (x\mid y)_{x_0}$.
This shows the first inequality.

To show the second inequality we may assume that 
$\min\{d(x_0,x), d(x_0,y) \}>(x\mid y)_{x_0} +D$.
Let $t \in \Rgz$ with $(x\mid y)_{x_0} +D\leq t \leq \min\{d(x_0,x), d(x_0,y) \}$.
Let $\gamma_x$ be a geodesic from $x_0$ to $x$,
$\gamma_y$ a geodesic from $x_0$ to $y$
and $\eta$ a geodesic from $x$ to $y$.
Let $\gamma_x^{-1}$ be the geodesic segment from $x$ to $x_0$ defined by
$\gamma_x^{-1} (t) =\gamma_x(d(x_0,x)-t)$, $t \in [0,d(x_0,x)]$, and we define $\gamma_y^{-1}$ similarly.
Since the triangle consisting of $\gamma_x$, $\gamma_y$ and $\eta$ is $4\delta$-thin,
we have 
\begin{align*}
d(\eta(d(x_0,x)-t),\gamma_x(t)) 
&=d(\eta(d(x_0,x)-t),\gamma_x^{-1}(d(x_0,x)-t)) \leq 4\delta
\text{ and }
\\
d(\eta(d(x,y)-d(x_0,y)+t), \gamma_y(t))
&=d(\eta^{-1}(d(x_0,y)-t), \gamma_y^{-1}(d(x_0,y)-t))
\leq 4\delta.
\end{align*}
Thus
\begin{align*}
d(\gamma_x(t), \gamma_y(t) ) 
&\geq 
d(\eta(d(x_0,x)-t),\eta(d(x,y)-d(x_0,y)+t))\\
&\quad -
d(\eta(d(x_0,x)-t),\gamma_x(t)) -d(\eta(d(x,y)-d(x_0,y)+t), \gamma_y(t))
\\
&
\geq d(\eta(d(x_0,x)-t),\eta(d(x,y)-d(x_0,y)+t))-8\delta
\\
&=|d(x_0,x)-t -(d(x,y)-d(x_0,y)+t) )|-D+1
\\
&=2t -2(x\mid y)_{x_0}-D+1 \geq D+1 >D.
\end{align*}
Therefore we have the second inequality.
\end{proof}
In particular, the Gromov compactifications with respect to  $(\cdot\mid \cdot)_{x_0}$ and $(\cdot \mid \cdot)^D$ are equivalent.
\end{ex}

\end{document}